\newtheorem{theorem}{Theorem}[section]
\newtheorem{corollary}[theorem]{Corollary}
\newtheorem{lemma}[theorem]{Lemma}
\newtheorem{proposition}[theorem]{Proposition}
\newtheorem{example}[theorem]{Example}
\theoremstyle{definition}
\newtheorem{definition}[theorem]{Definition}
\theoremstyle{remark}
\newtheorem{remark}[theorem]{Remark}
\numberwithin{equation}{section}
\DeclareMathOperator{\diam}{diam}
\newcommand{\mfu}{\mathfrak{U}}
\newcommand{\Sp}[1]{\operatorname{Sp}(#1)}
\newcommand{\abs}[1]{\left\vert#1\right\vert}
\newcommand{\B}{\mathbf{B}}
\numberwithin{equation}{section}
\begin{document}

\title[Extremal properties of finite ultrametric spaces]{Extremal properties and morphisms of finite ultrametric spaces and their representing trees}
\author{Oleksiy Dovgoshey}
\address{Institute of Applied Mathematics and Mechanics of NASU, Dobrovolskogo str. 1, Slovyansk 84100, Ukraine}
\email{oleksiy.dovgoshey@gmail.com}

\author{Evgeniy Petrov}
\address{Institute of Applied Mathematics and Mechanics of NASU, Dobrovolskogo str. 1, Slovyansk 84100, Ukraine}
\email{eugeniy.petrov@gmail.com}

\author{Hanns-Martin Teichert}
\address{Institute of Mathematics, University of L\"ubeck, Ratzeburger Allee 160, 23562 L\"ubeck, Germany}
\email{teichert@math.uni-luebeck.de}

\begin{abstract}
We study extremal properties of finite ultrametric spaces $X$ and related properties of representing trees $T_X$.   The notion of weak similarity for such spaces is introduced and related morphisms of labeled rooted trees are found.
It is shown that the finite rooted trees are isomorphic to the rooted trees of nonsingular balls of special finite ultrametric spaces. We also found conditions under which the isomorphism of representing trees $T_X$ and $T_Y$ implies the isometricity of ultrametric spaces $X$ and $Y$.
\end{abstract}

\keywords{Finite ultrametric space, sharp inequality for finite ultrametric space, isometry, ball-preserving mapping, isomorphism of rooted trees}        % the keywords

\subjclass[2010]{54E35, 05C05}      % MR(2000) Subject Classification

\maketitle
\tableofcontents
\section{Introduction}
In 2001 at the Workshop on General Algebra the attention of experts on the theory of lattices was paid to the following problem of I.~M.~Gelfand: Using graph theory describe up to isometry all finite ultrametric spaces~\cite{WGA}. An appropriate representation of ultrametric spaces $X$ by monotone rooted trees $T_X$ was proposed in ~\cite{GV}. The representation from~\cite{GV} can be considered in some sense as a solution of above mentioned problem. The question naturally arises about applications of this representation. One such application is the structural characteristic of finite ultrametric spaces for which the Gomory-Hu inequality becomes an equality, see~\cite{PD(UMB)}. The ultrametric spaces for which its representing trees are strictly binary were described in~\cite{DPT}. Our paper is also a contribution to this line of studies.

The first section of the paper contains the main definitions and the required technical results. In the second section we describe extremal and structural properties of finite ultrametric spaces which have the strictly $n$-ary representing trees and  the properties of such spaces having the representing trees with injective internal labeling. The main results here are Theorem~\ref{t1}, Theorem~\ref{t27} and Corollary~\ref{c2.12}. It is clear that the representing tree and the range of distance function are invariant under isometries of finite ultrametric spaces. Theorem~\ref{t3.5} and Theorem~\ref{t3.7} of the fifth section of the paper contain a description of spaces  for which the converse also holds: If finite ultrametric spaces $X$ and $Y$ have the same representing trees and the same range of distance functions, then $X$ and $Y$  are isometric.
The ball-preserving mappings, the isometries, and the weak similarities of finite ultrametric spaces are considered in the third section. The corresponding classes of morphisms of rooted trees are described in Theorem~\ref{t32}, Theorem~\ref{l3.3} and Theorem~{\ref{t4.3*}} respectively.
In the fourth section we prove that every finite rooted tree is isomorphic to the rooted tree of nonsingular balls of finite ultrametric space and find the minimal cardinality of this space in Theorem~\ref{t36}. The inclusions of finite rooted trees into balleans of some special ultrametric spaces are described  by Theorem~\ref{t3.6.2} and Theorem~\ref{t4.16}.

Recall some definitions from the theory of metric spaces and the graph theory.
\begin{definition}\label{d1.1}
	An \textit{ultrametric} on a set $X$ is a function $d\colon X\times X\rightarrow \mathbb{R}^+$, $\mathbb R^+ = [0,\infty)$, such that for all $x,y,z \in X$:
	\begin{itemize}
		\item [$(i)$] $d(x,y)=d(y,x)$,
		\item [$(ii)$] $(d(x,y)=0)\Leftrightarrow (x=y)$,
		\item [$(iii)$] $d(x,y)\leq \max \{d(x,z),d(z,y)\}$.
	\end{itemize}
\end{definition}
Inequality $(iii)$  is often called the {\it strong triangle inequality}. The \emph{spectrum} of an ultrametric space $(X,d)$ is the set
$$
	\operatorname{Sp}(X)=\{d(x,y)\colon x,y \in  X\}.
$$
The quantity
$$
\diam X=\sup\{d(x,y)\colon x,y\in X\}.
$$
is the diameter of $(X,d)$.
Recall that a \textit{graph} is a pair $(V,E)$ consisting of a nonempty set $V$ and a (probably empty) set $E$  whose elements are unordered pairs of different points from $V$. For a graph $G=(V,E)$, the sets $V=V(G)$ and $E=E(G)$ are called \textit{the set of vertices (or nodes)} and \textit{the set of edges}, respectively. If $\{x,y\} \in E(G)$, then the vertices $x$ and $y$ are \emph{adjacent}. A graph $G=(V,E)$ together with a function $w\colon E\rightarrow \mathbb{R}^+$ is called a \textit{weighted} graph, and  $w$ is called a \textit{weight} or a \textit{weighting function}. The weighted graphs will be denoted as $(G,w)$. A graph is \emph{complete} if $\{x,y\} \in E(G)$ for all distinct $x, y \in V(G)$. Recall that a \emph{path} is a nonempty graph $P=(V,E)$ whose vertices can be numbered so that
$$
V=\{x_0,x_1,...,x_k\}, \quad E=\{\{x_0,x_1\},...,\{x_{k-1},x_k\}\}.
$$
A finite graph $C$ is a \textit{cycle} if $|V(C)|\geq 3$ and there exists an enumeration $(v_1,v_2,...,v_n)$
 of its vertices such that
\begin{equation*}
(\{v_i,v_j\}\in E(C))\Leftrightarrow (|i-j|=1\quad \mbox{or}\quad |i-j|=n-1).
\end{equation*}
A graph $H$ is a \emph{subgraph} of a graph $G$ if
$$
V(H) \subseteq V(G) \ \text{ and } \ E(H) \subseteq E(G).
$$
We write $H\subseteq G$ if $H$ is a subgraph of $G$. A cycle $C$ is a \emph{Hamilton cycle} in a graph $G$ if $C\subseteq G$ and $V(C) =V(G)$. Similarly a path $P\subseteq G$ is a \emph{Hamilton path} in $G$ if $V(P) = V(G)$.
A connected graph without cycles is called a \emph{tree}.
A tree $T$ may have a distinguished vertex $r$ called the \emph{root}; in this case $T$ is called a \emph{rooted tree} and we write $T=T(r)$. A \emph{labeled rooted tree} $T=T(r,l)$ is a rooted tree $T(r)$ with a labeling $l\colon V(T)\to L$ where $L$ is a set of labels. Generally we  follow terminology used in~\cite{BM}.

\begin{definition}\label{def3.1}
A nonempty graph $G$ is called \emph{complete $k$-partite} if its vertices can be divided into disjoint nonempty sets $X_1,...,X_k$ so that there are no edges joining the vertices of the same set $X_i$ and any two vertices from different $X_i,X_j$, $1\leqslant i,j \leqslant k$ are adjacent. In this case we write $G=G[X_1,...,X_k]$.
\end{definition}
We shall say that $G$ is a {\it complete multipartite graph} if there exists
 $k \geqslant 2$ such that $G$ is complete $k$-partite.

\begin{definition}[\cite{DDP(P-adic)}]\label{d2}
Let $(X,d)$ be a finite ultrametric space. Define the graph $G_X^d$ as follows $V(G_X^d)=X$ and
$$
(\{u,v\}\in E(G_X^d))\Leftrightarrow(d(u,v)=\diam X).
$$
We call $G_X^d$ the \emph{diametrical graph} of $X$.
\end{definition}

\begin{theorem}[\cite{DDP(P-adic)}]\label{t13}
Let $(X,d)$ be a finite ultrametric space, $|X|\geqslant 2$. Then $G_X^d$ is complete multipartite.
\end{theorem}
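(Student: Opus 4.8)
The plan is to show that the diametrical graph is complete multipartite by exhibiting the right partition of $X$ and verifying both adjacency conditions from Definition~\ref{def3.1}. The natural candidate for the partition comes from the strong triangle inequality: define a relation on $X$ by declaring $u$ and $v$ to be in the same class when $d(u,v) < \diam X$. First I would verify that this is an equivalence relation. Reflexivity is immediate since $d(u,u) = 0 < \diam X$ (using $|X| \geq 2$, so $\diam X > 0$), and symmetry follows from $(i)$. The crucial point is transitivity, and this is exactly where the ultrametric inequality does the work: if $d(u,v) < \diam X$ and $d(v,z) < \diam X$, then by $(iii)$ we get $d(u,z) \leq \max\{d(u,v), d(v,z)\} < \diam X$, so $u$ and $z$ lie in the same class.

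Let $X_1, \dots, X_k$ denote the resulting equivalence classes. By construction these are disjoint nonempty sets whose union is $X$. I would then check the two edge conditions. If $u, v$ belong to the same class $X_i$, then $d(u,v) < \diam X$, so by Definition~\ref{d2} the pair $\{u,v\}$ is not an edge of $G_X^d$; hence there are no edges within a single class. Conversely, if $u \in X_i$ and $v \in X_j$ with $i \neq j$, then by definition of the equivalence relation $d(u,v)$ is not strictly less than $\diam X$; since $\diam X$ is the supremum (in fact maximum, as $X$ is finite) of all distances, this forces $d(u,v) = \diam X$, so $\{u,v\} \in E(G_X^d)$. Thus any two vertices from different classes are adjacent.

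It remains to confirm that $k \geq 2$, so that the graph is genuinely complete multipartite rather than trivially a single independent set. Since $|X| \geq 2$ and $X$ is finite, $\diam X > 0$ is attained by some pair $x, y \in X$ with $d(x,y) = \diam X$; these two points lie in different classes by the definition of the relation, so there are at least two classes. This completes the verification that $G_X^d = G[X_1, \dots, X_k]$ is complete $k$-partite with $k \geq 2$.

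The main obstacle, if any, is conceptual rather than computational: one must recognize that the ``less than the diameter'' relation is an equivalence relation precisely because of the strong triangle inequality, a phenomenon with no analogue in ordinary metric spaces. Everything else reduces to unwinding the definitions and using finiteness to guarantee the diameter is attained. I expect the proof to be short once the partition is identified.
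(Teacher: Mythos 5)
Your proof is correct: the relation $d(u,v)<\diam X$ is an equivalence relation precisely by the strong triangle inequality, its classes give the required partition, and finiteness guarantees the diameter is attained so that cross-class pairs are adjacent and $k\geqslant 2$. The paper cites this theorem from an external reference without reproducing the argument, but your proof is exactly the standard one used there, so there is nothing to add.
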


For every nonempty finite ultrametric space $(X, d)$ we can associate  a labeled rooted tree $T_X=T_X(r,l)$ with $r=X$ and $l\colon V(T)\to \Sp{X}$ by the following rule (see~\cite{PD(UMB)}).

If $X=\{x\}$ is a one-point set, then $T_X$ is the rooted tree consisting of one node $X$ with the label~$\diam X=0$. Note that for the rooted trees consisting only of one node, we consider that this node is the root as well as a leaf.

Let $|X|\geqslant 2$. According to Theorem~\ref{t13} we have $G^d_X = G^d_X[X_1,...,X_k]$, $k \geqslant 2$.
In this case the root of the tree $T_X$ is labeled by $\diam X$ and, moreover, $T_X$ has the nodes $X_1,...,X_k$ of the first level with the labels
\begin{equation}\label{e2.7}
l(X_i)= \diam X_i,
\end{equation}
$i = 1,...,k$. The nodes of the first level with the label $0$ are leaves, and those indicated by strictly positive labels are internal nodes of the tree $T_X$. If the first level has no internal nodes, then the tree $T_X$ is constructed. Otherwise, by repeating the above-described procedure with $X_1,...,X_k$ instead of $X$, we obtain the nodes of the second level, etc. Since $X$ is finite, all vertices on some level will be leaves, and the construction of $T_X$ is completed.

The above-constructed labeled rooted tree $T_X$ is called the \emph{representing tree} of the ultrametric space $(X, d)$.
It should be noted that every finite ultrametric space is isometric to the set of leaves of an \emph{equidistant} tree with standard shortest-path metric. (See Figure~\ref{f1} for an example of equidistant tree.)
The descriptions of finite ultrametric spaces by representing trees and, respectively, equidistant trees are dual in a certain sense,  but the study of this duality is not a goal of the paper. Se also~\cite{GV, GNS00,GurVyal(2012),H04,Le} for correspondence between trees and ultrametric spaces.

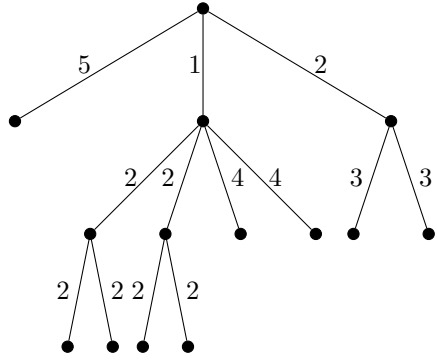
\begin{figure}
\begin{center}
\begin{tikzpicture}%[scale=1.5,font=\footnotesize]
% Specify spacing for each level of the tree
\tikzstyle{level 1}=[level distance=15mm,sibling distance=25mm]
\tikzstyle{level 2}=[level distance=15mm,sibling distance=10mm]
\tikzstyle{level 3}=[level distance=15mm,sibling distance=6mm]

\tikzset{
% Two node styles for game trees: solid and hollow
solid node/.style={circle,draw,inner sep=1.5,fill=black},
hollow node/.style={circle,draw,inner sep=1.5}
}

% The Tree
\node(0)[solid node]{}
    child{node(1)[solid node]{}edge from parent node[left]{$ \ 5 \ $}}
    child{node(1)[solid node]{}
        child{node[solid node]{}
        child{node[solid node]{} edge from parent node[left]{$2$}}
        child{node[solid node]{} edge from parent node[right]{$2$}}
        edge from parent node[left]{$2$}
        }
        child{node[solid node]{}
        child{node[solid node]{} edge from parent node[left]{$2$}}
        child{node[solid node]{} edge from parent node[right]{$2$}}
        edge from parent node[left]{$2$}
        }
        child{node[solid node]{} edge from parent node[right]{$4$}}
        child{node[solid node]{} edge from parent node[right]{$4$}}
        edge from parent node[left,xshift=3]{$1$}
    }
    child{node[solid node]{}
        child{node[solid node]{} edge from parent node[left]{$3$}}
        child{node[solid node]{} edge from parent node[right]{$3$}}
        edge from parent node[right,xshift=3]{$2$}
    };

\end{tikzpicture}
\end{center}
\caption{An equidistant tree.}
\label{f1}
\end{figure}

\begin{definition}\label{d14}
Let $(X,d)$ be an ultrametric space with $|X|\geqslant 2$ and  the spectrum $\operatorname{Sp}(X)$ and let $t\in \operatorname{Sp}(X)$ be nonzero. Define by $G_{t,X}$ a graph for which $V(G_{t,X})=X$ and
$$
(\{u,v\}\in E(G_{t,X}))\Leftrightarrow (d(u,v)=t).
$$
\end{definition}
 It is clear that $G_{t,X}$ is the diametrical graph of $X$ for $t=\operatorname{diam} X$.

\begin{definition}\label{d15}
Let $G=(V,E)$ be a nonempty graph, and let $V_0$ be the set (possibly empty) of all isolated vertices of $G$. Denote by $G'$ the subgraph of the graph $G$, induced by the set $V\backslash V_0$.
\end{definition}

Recall that a rooted tree is \emph{strictly $n$-ary} if every its internal node has exactly $n$ children. In the case $n=2$ such tree is called \emph{strictly binary}. The restriction of the labeling on the set of internal nodes of the labeled rooted tree~$T$ is called the \emph{internal labeling} of~$T$.

In 1961 E.\,C.~Gomory and T.\,C.~Hu \cite{GomoryHu(1961)}, for arbitrary finite ultrametric space $X$, proved the inequality
$$
	|\operatorname{Sp}(X)| \leqslant |X|.
$$
Denote by  $\mfu$ the class of finite ultrametric spaces $X$ with $\abs{\Sp{X}} = \abs{X}$.

\begin{theorem}[\cite{PD(UMB)}]\label{t1.7}
Let $(X, d)$ be a finite ultrametric space with $|X| \geqslant 2$. The following condidtions are equivalent.
\begin{itemize}
  \item [$(i)$] $(X, d) \in \mathfrak U$.
  \item [$(ii)$] $G'_{t,X}$ is complete bipartite for every  nonzero $t\in \operatorname{Sp}(X)$.
  \item [$(iii)$] $T_X$ is strictly binary and the internal labeling of $T_X$ is injective.
\end{itemize}
\end{theorem}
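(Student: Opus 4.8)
The plan is to route everything through condition $(iii)$, proving the two equivalences $(i)\Leftrightarrow(iii)$ and $(ii)\Leftrightarrow(iii)$ separately: the first is a counting argument on $T_X$, and the second is read off from the structure of the graphs $G_{t,X}$ via $T_X$. First I would set up a dictionary between distances in $X$ and the combinatorics of $T_X$. Each node of $T_X$ is a ball $B\subseteq X$ carrying the label $\diam B$, its internal nodes are exactly the balls with $\diam B>0$, and its leaves are the singletons (label $0$). The crucial observation is that labels strictly decrease along every parent-to-child edge: if $B$ has children $X_1,\dots,X_k$ arising from $G^d_B=G^d_B[X_1,\dots,X_k]$, then any two points of a single $X_i$ are non-adjacent in $G^d_B$, hence lie at distance $<\diam B$, so $\diam X_i<\diam B$. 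Consequently, for two distinct leaves $u,v$ one has $d(u,v)=\diam A$, where $A$ is their deepest common ancestor (the points $u,v$ lie in different children of $A$, hence are adjacent in $G^d_A$ and at distance $\diam A$). Since every internal node is the deepest common ancestor of some pair of leaves, this identifies $\operatorname{Sp}(X)\setminus\{0\}$ with the set of labels occurring at internal nodes.

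For $(i)\Leftrightarrow(iii)$ I would count. Write $n=|X|$ for the number of leaves, $m$ for the number of internal nodes, and $I=|\operatorname{Sp}(X)|-1$ for the number of distinct internal labels. The label map from internal nodes onto $\operatorname{Sp}(X)\setminus\{0\}$ is surjective, so $I\le m$, with equality precisely when the internal labeling is injective. By Theorem~\ref{t13} every internal node has $k\ge 2$ children, so the number of non-root nodes, which is $n+m-1$ and equals the number of children summed over the internal nodes, is at least $2m$; hence $2m\le n+m-1$, i.e. $m\le n-1$, with equality precisely when every internal node has exactly two children, that is, when $T_X$ is strictly binary. Thus
$$
I\le m\le n-1 .
$$
Now $(X,d)\in\mfu$ means $|\operatorname{Sp}(X)|=|X|$, i.e. $I=n-1$, which forces both inequalities to be equalities; conversely $(iii)$ yields $I=m=n-1$. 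Hence $(i)\Leftrightarrow(iii)$.

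For $(ii)\Leftrightarrow(iii)$ I would analyse $G'_{t,X}$ for a fixed nonzero $t\in\operatorname{Sp}(X)$. By the dictionary, an edge $\{u,v\}$ of $G_{t,X}$ occurs exactly when the deepest common ancestor $A$ of $u,v$ satisfies $\diam A=t$; and since labels strictly decrease downward, two distinct internal nodes with the same label $t$ cannot be in an ancestor--descendant relation, so they are disjoint as subsets of $X$, with no $G_{t,X}$-edges between them (for $u\in B_1$, $v\in B_2$ the common ancestor properly contains $B_1$, whence $d(u,v)>t$). Therefore $G'_{t,X}$ is the disjoint union, over the internal nodes $B$ with $\diam B=t$, of the diametrical graphs $G^d_B$, each of which is complete multipartite by Theorem~\ref{t13} and so has no isolated vertices. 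A complete bipartite graph is connected, hence $G'_{t,X}$ is complete bipartite only if there is a single internal node $B$ with label $t$ and $G^d_B$ is complete $2$-partite. Requiring this for every nonzero $t\in\operatorname{Sp}(X)$ says exactly that each element of $\operatorname{Sp}(X)\setminus\{0\}$ is the label of precisely one internal node (injectivity of the internal labeling) and that every internal node has exactly two children (strict binarity); the converse implication is immediate. This closes the chain of equivalences.

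The main obstacle is the structural dictionary of the first paragraph---above all the strict monotonicity of labels down $T_X$ and the resulting decomposition of $G'_{t,X}$ into vertex-disjoint complete multipartite blocks. Once these are in place, $(i)\Leftrightarrow(iii)$ is pure counting and $(ii)\Leftrightarrow(iii)$ is the remark that a complete bipartite graph is connected and triangle-free.
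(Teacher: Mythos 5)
Your proof is correct. Note that the paper states Theorem~\ref{t1.7} without proof, citing it from~\cite{PD(UMB)}; your argument is essentially the $n=2$ specialization of the machinery the paper develops afterwards: the decomposition of $G'_{t,X}$ into vertex-disjoint complete multipartite blocks indexed by the internal nodes labeled $t$ is Lemma~\ref{c25}, the counting $I\leqslant m\leqslant n-1$ reproduces Remark~\ref{r2.5} together with Lemma~\ref{l2.9}, and your two equivalences are the $n=2$ cases of Theorem~\ref{t1} and Theorem~\ref{t27}. So the route through the representing tree matches the paper's own treatment in all essentials.
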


Another criterium of $X \in \mfu$ in terms of weighted Hamilton cycles and weighted Hamilton paths was proved in~\cite{DPT}.

Let $(X,d)$ be a finite ultrametric space and let $Y$ be a nonempty subspace of $X$. In the next proposition we identify $Y$ with the complete weighted graph $(G_Y,w)$ such that $V(G_Y)=Y$ and
\begin{equation}\label{eq2}
\forall \,  x,y\in Y \colon \quad	 w(\{x,y\})=d(x,y) \text{ if } x\neq y.
\end{equation}

\begin{proposition}[\cite{DPT}]\label{p10}
Let $(X,d)$ be a finite nonempty ultrametric space. The following conditions are equivalent.
\begin{itemize}
\item [$(i)$] $T_X$ is strictly binary.
\item [$(ii)$] If $Y\subseteq X$ and $|Y|\geqslant 3$, then there exists a Hamilton cycle in $(G_{Y},w)$ with exactly two edges of maximal weight.
\item [$(iii)$] There is no equilateral triangle in $(X,d)$.
\end{itemize}
\end{proposition}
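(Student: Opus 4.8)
The plan is to route the entire equivalence through condition~$(iii)$, establishing $(ii)\Leftrightarrow(iii)$ and $(i)\Leftrightarrow(iii)$ separately, and to isolate the combinatorial heart as a statement about a single subspace: for a finite ultrametric space $Z$ with $|Z|\geqslant 3$, the weighted complete graph $(G_Z,w)$ admits a Hamilton cycle with exactly two edges of maximal weight if and only if its diametrical graph $G_Z^d$ is complete bipartite. Once this is available, $(ii)$ becomes the assertion that $G_Y^d$ is complete bipartite for every $Y\subseteq X$ with $|Y|\geqslant 3$, and the remaining work is bookkeeping via Theorem~\ref{t13}.

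For this per-subspace statement, note first that in $(G_Z,w)$ the edges of maximal weight $\diam Z$ are precisely the edges of the diametrical graph $G_Z^d$, while every edge of weight $<\diam Z$ joins two vertices lying in the same part of $G_Z^d=G_Z^d[Z_1,\dots,Z_m]$. Sufficiency is an explicit construction: if $m=2$, list $Z_1=\{a_1,\dots,a_p\}$ and $Z_2=\{b_1,\dots,b_q\}$ in any order and form the cycle $a_1a_2\cdots a_p b_1 b_2\cdots b_q a_1$; its only edges between $Z_1$ and $Z_2$ are $\{a_p,b_1\}$ and $\{b_q,a_1\}$, both of weight $\diam Z$, and all remaining edges stay inside one part and so have weight $<\diam Z$, giving exactly two edges of maximal weight. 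I expect the necessity direction to be the main obstacle, and it is handled as follows. Given a Hamilton cycle with exactly two edges of weight $\diam Z$, delete these two edges; the cycle splits into two paths, each of whose edges has weight $<\diam Z$. By the strong triangle inequality, for any two vertices $u,v$ on such a path one has $d(u,v)\leqslant\max(\text{edge weights along the subpath})<\diam Z$, so all vertices of a single path lie in one common part of $G_Z^d$. Thus $Z$ is covered by two parts; since the two deleted edges join distinct parts these two parts are distinct, and since the parts partition $Z$ we conclude $m=2$, i.e.\ $G_Z^d$ is complete bipartite.

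It remains to eliminate the quantifier over subspaces and to bring in the tree. For $(ii)\Leftrightarrow(iii)$ observe that $G_Y^d$ fails to be complete bipartite exactly when it has $m\geqslant 3$ parts, in which case choosing one vertex from each of three distinct parts yields three points pairwise at distance $\diam Y$, i.e.\ an equilateral triangle; conversely any equilateral triangle $\{x,y,z\}$ gives $Y=\{x,y,z\}$ with $G_Y^d$ complete $3$-partite, which is not complete bipartite. Hence the condition that $G_Y^d$ be complete bipartite for all $Y$ with $|Y|\geqslant 3$ is equivalent to the absence of equilateral triangles, proving $(ii)\Leftrightarrow(iii)$. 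For $(i)\Leftrightarrow(iii)$ I would use that the nodes of $T_X$ are balls ordered by inclusion and that, by Theorem~\ref{t13}, a node $B$ with $|B|\geqslant 2$ has exactly as many children as $G_B^d$ has parts. If $T_X$ is not strictly binary some node $B$ has $k\geqslant 3$ children, and picking one point from each of three children produces an equilateral triangle of side $\diam B$; conversely, given an equilateral triangle $\{x,y,z\}$ of side $t$, let $B$ be the least common ancestor of the leaves $x,y,z$ in $T_X$. Since $B$ contains $x,y$ we have $\diam B\geqslant t$, and minimality forces $\diam B=t$ (otherwise $x,y,z$ would be pairwise nonadjacent in $G_B^d$, hence all in one part and thus in a single child of $B$, contradicting minimality), whence $x,y,z$ lie in three distinct parts of $G_B^d$ and $B$ has at least three children, so $T_X$ is not strictly binary. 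This yields $(i)\Leftrightarrow(iii)$ and completes the proof.
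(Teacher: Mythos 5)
This proposition is imported from \cite{DPT} and the present paper gives no proof of it, so there is no in-paper argument to compare against line by line; judged on its own, your proof is correct. Your pivot --- that for a single subspace $Z$ with $\abs{Z}\geqslant 3$ the existence of a Hamilton cycle with exactly two edges of maximal weight is equivalent to $G_Z^d$ being complete bipartite --- is sound, and its two halves line up with machinery that does appear in this paper: the sufficiency construction (concatenating the two parts into one cycle) is exactly the cycle used in the proof of Proposition~\ref{p2.13} specialized to $n=2$, while the necessity direction (cutting the cycle at its two heavy edges and using the iterated strong triangle inequality to confine each resulting path to a single part of the complete multipartite graph from Theorem~\ref{t13}) is the genuinely new content and is airtight, including the degenerate case where one of the two paths is a single vertex. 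The bridge $(i)\Leftrightarrow(iii)$ via least common ancestors in $T_X$, using that the children of a node $B$ are precisely the parts of $G_B^d$, is also correct. One step worth making explicit: in the necessity argument you silently replace ``exactly two edges of maximal weight'' by ``exactly two edges of weight $\diam Z$''; this requires the one-line observation that the maximal edge weight of \emph{any} Hamilton cycle in an ultrametric space equals $\diam Z$ (apply the strong triangle inequality around the cycle to a diametrical pair), which is the same estimate you already invoke along the two paths, so this is a presentational omission rather than a gap.
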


In the next section of the paper we characterize the finite ultrametric spaces having the strictly $n$-ary representing trees and the finite ultrametric spaces having the representing trees with injective internal labeling.

\section{Injective internal labeling and strictly $n$-ary trees}\label{sc}

Let $T$ be a rooted tree with the root $r$. We  denote by $\overline{L}_T$ the set of the leaves of $T$, and, for every node $v$ of $T$,  by $T_v=T(v)$ the induced rooted subtree of $T$ with the root $v$ and such that
\begin{equation}\label{e2.1}
(u \in V(T_v)) \Leftrightarrow (u=v \text{ or $u$ is a successor of $v$ in $T$})
\end{equation}
holds for every $u\in V(T)$.
If $(X,d)$ is a finite ultrametric space and $T=T_X$, where $T_X$ is the representing tree of $X$, then for every node $v\in V(T)$ there are $x_1$, $\ldots$, $x_k \in X$ such that $\overline{L}_{T_v}=\{\{x_1\}, \ldots, \{x_k\}\}$. Thus $\overline{L}_{T_v}$ is a set of one-point subsets of $X$. In what follows we will use the notation $L_{T_v}$ for the set $\{x_1, \ldots, x_k\}$.

The following lemma was proved in~\cite{PD(UMB)} for the spaces $X\in \mathfrak U$ but its proof is also true for arbitrary finite ultrametric spaces.

\begin{lemma}\label{l2} Let $(X, d)$ be a finite ultrametric space with the representing tree $T_X$ and the labeling $l\colon V(T_X)\to \Sp{X}$ defined by~(\ref{e2.7}) and let $u_1 = \{x_1\}$ and $u_2 = \{x_2\}$ be two different leaves of the tree $T_X$. If
$(u_1, v_1, \ldots, v_n, u_2)$ is the path joining the leaves $u_1$ and $u_2$ in $T_X$, then
\begin{equation}\label{e1}
d(x_1, x_2) = \max\limits_{1\leqslant i \leqslant n} l({v}_i).
\end{equation}
\end{lemma}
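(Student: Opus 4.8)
The plan is to proceed by induction on the structure of the representing tree $T_X$, exploiting the recursive construction of $T_X$ via diametrical graphs. Recall that the nodes of $T_X$ are subsets of $X$ (with the root being $X$ itself), and for an internal node $v$ the children are exactly the partition classes $X_1,\dots,X_k$ produced by the complete multipartite diametrical graph $G^d_{v}$ of the subspace $v$, with label $l(v)=\diam v$. The leaves $u_1=\{x_1\}$ and $u_2=\{x_2\}$ are singletons, and the path joining them in the tree passes through their \emph{lowest common ancestor}; call it $w$. The key structural fact I would isolate first is that $w$ is the smallest ball (smallest node, with respect to inclusion) of $T_X$ containing both $x_1$ and $x_2$, and that $l(w)=\diam w$ sits at the top of the path $(u_1,v_1,\dots,v_n,u_2)$, i.e.\ $w$ is one of the $v_i$ and in fact $w=v_j$ for the index $j$ realizing the maximum position on the path.

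The heart of the argument is to show the two inequalities $d(x_1,x_2)\le \max_i l(v_i)$ and $d(x_1,x_2)\ge \max_i l(v_i)$, and I claim both reduce to understanding the lowest common ancestor $w$. First I would argue $d(x_1,x_2)\le l(w)=\diam w$: since $x_1,x_2\in w$ and $w$ is a subspace of $X$, the distance $d(x_1,x_2)$ is at most the diameter of $w$ by definition of diameter, and $l(w)$ appears among the $l(v_i)$, so $d(x_1,x_2)\le \max_i l(v_i)$. For the reverse direction I would show that $l(w)\le d(x_1,x_2)$ together with $l(v_i)\le l(w)$ for every node $v_i$ on the path. The latter monotonicity follows from the construction: as one descends the tree, the labels $\diam v$ are non-increasing along any root-to-leaf path, because each child $X_i$ of a node $v$ satisfies $\diam X_i<\diam v$ (the edges of the diametrical graph have weight $\diam v$, and the classes $X_i$ are precisely the sets of mutually non-diametrical points, hence of strictly smaller diameter). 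Since every $v_i$ lies in the subtree rooted at $w$ or equals $w$, each satisfies $l(v_i)\le l(w)$, giving $\max_i l(v_i)=l(w)$.

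It then remains to prove the single sharp equality $d(x_1,x_2)=\diam w$ where $w$ is the lowest common ancestor. By minimality, $x_1$ and $x_2$ lie in two \emph{different} children $X_a,X_b$ of $w$ (if they lay in the same child, that child would be a smaller common ancestor). By the definition of the diametrical graph $G^d_{w}$ and the complete multipartite structure from Theorem~\ref{t13}, any two points in distinct partition classes are adjacent, which means precisely that $d(x_1,x_2)=\diam w$. This is the crux and the one step where the multipartite characterization is indispensable. I expect the main obstacle to be the careful bookkeeping identifying the lowest common ancestor $w$ with a specific $v_j$ on the path and verifying that the tree path indeed ascends to $w$ and descends again, so that every $v_i$ is a node of $T(w)$; once this correspondence between the graph-theoretic path and the ancestor relation is pinned down, the ultrametric content is exactly the adjacency criterion of Definition~\ref{d2}. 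Combining $d(x_1,x_2)=\diam w=l(w)=\max_i l(v_i)$ completes the proof.
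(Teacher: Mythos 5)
Your argument is correct. Note that the paper does not reproduce a proof of Lemma~\ref{l2} at all --- it only remarks that the proof from~\cite{PD(UMB)} for spaces in $\mfu$ goes through verbatim for arbitrary finite ultrametric spaces --- so there is no in-paper proof to compare against; but your write-up supplies exactly the expected argument. The three ingredients you isolate are the right ones and each is justified: the path between two leaves passes through their lowest common ancestor $w$, so every $v_i$ lies in $T_w$; labels strictly decrease along root-to-leaf paths because a part $X_i$ of the complete multipartite graph $G^d_v$ contains no diametrical pair of $v$, whence $\diam X_i<\diam v$ and therefore $\max_i l(v_i)=l(w)$; and minimality of $w$ forces $x_1$ and $x_2$ into distinct parts of $G^d_w$, so they are adjacent there and $d(x_1,x_2)=\diam w=l(w)$ by Definition~\ref{d2} and Theorem~\ref{t13}. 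The only bookkeeping worth spelling out in a final version is the observation that each $v_i$ is an ancestor of $\{x_1\}$ or of $\{x_2\}$ (hence a non-leaf node of $T_w$), which is what licenses the monotonicity comparison $l(v_i)\leqslant l(w)$; you flag this yourself as the main obstacle, and it is routine.
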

Lemma~\ref{l2} implies in particular that the labeling $l\colon V(T_X)\to \Sp{X}$ is a surjective function for every nonempty finite ultrametric space $(X,d)$.

Recall that the \emph{union} $G_1 \cup G_2$ \emph{of graphs} $G_1$ and $G_2$ is the graph with the vertex set $V(G_1)\cup V(G_2)$ and the edge set $E(G_1)\cup E(G_2)$. If $V(G_1)\cap V(G_2)=\varnothing$, then the union $G_1\cup G_2$ is \emph{disjoint}.

\begin{lemma}\label{c25}
Let $(X,d)$ be a finite ultrametric space, let $T_X$ be its representing tree and let $t\in \Sp{X}\setminus \{0\}$. Then the graph $G'_{t,X}$ is the disjoint union of $p$ complete multipartite graphs $G^1_{t,X},...,G^p_{t,X}$ where $p$ is the number of nodes $x_1,...,x_p$ labeled by $t$.
Moreover for every $i\in \{1,...,p\}$ the graph $G^i_{r,X}$ is complete $k$-partite,
\begin{equation}\label{eq25}
G_{t,X}^i=G_{t,X}^i[L_{T_{s_{1}}},...,L_{T_{s_{k}}}],
\end{equation}
where $s_{1},...,s_{k}$ are the direct successors of $x_i$.
\end{lemma}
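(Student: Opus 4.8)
The plan is to reduce every claim to a single observation: for two distinct points of $X$ the distance equals the label of the least common ancestor of the corresponding leaves in $T_X$. First I would record the strict monotonicity of the labeling along edges of the tree. If $v'$ is a child of a node $v$, then $v'$ is one of the parts of the diametrical partition $G^d_v[X_1,\dots,X_k]$ of the ball $v$, and within any such part there are no pairs at distance $\diam v$; hence all pairwise distances in $v'$ are strictly below $\diam v$, so $l(v')=\diam v'<\diam v=l(v)$. Combining this with Lemma~\ref{l2} applied to the path joining two leaves $\{x\}$ and $\{y\}$, the labels increase strictly as one ascends toward the least common ancestor and decrease strictly as one descends; thus the maximum label on the path is attained precisely at that ancestor $v$, and therefore $d(x,y)=l(v)$.

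Next I would use this dictionary to locate the nodes $x_1,\dots,x_p$ labeled by $t$. Since $l$ strictly decreases from parent to child, no node labeled $t$ can be an ancestor of another such node; hence $x_1,\dots,x_p$ are pairwise incomparable, the rooted subtrees $T_{x_1},\dots,T_{x_p}$ are vertex-disjoint, and the leaf sets $L_{T_{x_1}},\dots,L_{T_{x_p}}$ are pairwise disjoint subsets of $X$. Moreover, because $t\neq 0$, each $x_i$ is an internal node, so by Theorem~\ref{t13} it has $k=k_i\geqslant 2$ direct successors $s_1,\dots,s_k$, and every $L_{T_{s_j}}$ is nonempty.

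The whole statement then follows from one edge criterion, read three ways. By the first step, $\{u,v\}\in E(G_{t,X})$ exactly when the least common ancestor of $\{u\}$ and $\{v\}$ carries the label $t$, that is, when $u$ and $v$ both lie in some $L_{T_{x_i}}$ but in different child-subtrees $L_{T_{s_j}}$ and $L_{T_{s_m}}$ with $j\neq m$. From this: (a) a vertex is non-isolated in $G_{t,X}$ if and only if it belongs to $\bigcup_{i=1}^{p} L_{T_{x_i}}$, where one uses $k_i\geqslant 2$ together with the nonemptiness of each $L_{T_{s_j}}$ to produce a partner; (b) there is no edge between distinct $L_{T_{x_i}}$ and $L_{T_{x_j}}$, since for such points the least common ancestor lies strictly above $x_i$ and hence has label $>t$; and (c) inside each $L_{T_{x_i}}$ the edges are exactly those joining distinct parts among $L_{T_{s_1}},\dots,L_{T_{s_k}}$. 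Assertion (c) is precisely the statement that the induced subgraph $G^i_{t,X}$ coincides with the complete $k$-partite graph $G^i_{t,X}[L_{T_{s_1}},\dots,L_{T_{s_k}}]$ of~(\ref{eq25}), while (a) and (b) say that $G'_{t,X}$ is the disjoint union of $G^1_{t,X},\dots,G^p_{t,X}$.

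The only genuine work lies in the first paragraph: identifying the maximal label on a leaf-to-leaf path with the label of the least common ancestor, for which the strict monotonicity of $l$ along edges is essential. Once that correspondence between distances and ancestor labels is in hand, the remaining assertions are bookkeeping, the only mild points being the nonemptiness of each $L_{T_{s_j}}$ and the inequality $k_i\geqslant 2$, both guaranteed by the representing-tree construction together with Theorem~\ref{t13}.
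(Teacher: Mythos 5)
Your proof is correct and follows essentially the same route as the paper's: both arguments rest on Lemma~\ref{l2} together with the structure of the representing tree to characterize the pairs at distance $t$ as those whose leaves split across different child-subtrees of a node labeled~$t$. You merely make explicit (via strict monotonicity of labels along edges and the least-common-ancestor formulation) what the paper compresses into ``according to Lemma~\ref{l2} and to the construction of the representing trees,'' and you handle the isolated vertices and the disjointness of the components with a bit more care.
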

\begin{proof}
Suppose first that there is a single node $x_1$ labeled by $t$. Let $s_1,...,s_k$ be the direct successors of $x_1$. Consider the subtrees $T_{s_1},...,T_{s_k}$ of $T_X$ with roots  $s_1,...,s_k$.  Let
$$
x, y \in \bigcup\limits_{j=1}^k L_{T_{s_j}}.
$$
According to Lemma~\ref{l2} and to the construction of the representing trees the equality $d(x,y)=t$ holds if and only if $x$ and $y$ belong to the different sets $L_{T_{s_1}},....,L_{T_{s_k}}$. Using Definitions~\ref{d14} and~\ref{def3.1} we see that $G_{t,X}'$ is complete $k$-partite with parts $L_{T_{s_1}},....,L_{T_{s_k}}$.

Let $x_1$, $\ldots$, $x_p$ be the internal nodes labeled by $t$. According to the construction of the representing trees there are no $i, j \in \{1,..,p\}$ such that $x_i$ is a successor of $x_j$. This means that $L_{T_{x_i}}\cap L_{T_{x_j}} = \varnothing$ for all $i, j \in \{1,..,p\}$, $i\neq j$. Arguing as above we see that every node $x_i$ generates a complete multipartite graph $G_{t,X}^i$, with $V(G_{t,X}^i)=L_{T_{x_i}}$ such that $V(G_{t,X}^i)\cap V(G_{t,X}^j) = \varnothing$ for all distinct $i, j \in \{1,..,p\}$.
\end{proof}

Let $(X,d)$ be a metric space. Recall that a \emph{ball} with a radius $r \geqslant 0$ and a center $c\in X$ is the set $B_r(c)=\{x\in X\colon d(x,c)\leqslant r\}$. The \emph{ballean}  $\mathbf{B}_X$ of the metric space $(X,d)$ is the set of all balls of $(X,d)$. Every one-point subset of $X$ belongs to $\mathbf{B}_X$, we will say that this is a \emph{singular} ball in~$X$.

The following proposition claims that the ballean of finite ultrametric space $(X,d)$ is  the vertex set of representing tree $T_X$. The proof of the proposition can be found in~\cite{P(TIAMM)} but we reproduce it here for the convenience of the reader.
\begin{proposition}\label{lbpm}
 Let $(X,d)$ be a finite ultrametric space  with representing tree  $T_X$, $|X|\geqslant 2$. Then the following statements hold.
\begin{itemize}
  \item [$(i)$] $L_{T_v}\in \mathbf{B}_X$ holds for every node $v\in V(T_X)$.
  \item [$(ii)$] For every $B \in \mathbf{B}_X$ there exists the node $v$ such that $L_{T_v}=B$.
\end{itemize}
\end{proposition}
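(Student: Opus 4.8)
The plan is to derive both parts from two structural features of $T_X$ together with Lemma~\ref{l2}. First, for every node $v$ the label equals the diameter of the associated set, $l(v)=\diam L_{T_v}$; this is built into the construction, since it holds for the root and for the first level by~\eqref{e2.7}, and the recursion reproduces it at every subsequent level. Second, labels strictly decrease along edges directed away from the root: if $s$ is a child of $v$, then $L_{T_s}$ is one of the parts of the diametrical graph of $L_{T_v}$, so any two of its points are non-adjacent in that graph and hence lie at distance strictly less than $\diam L_{T_v}$; thus $\diam L_{T_s}<\diam L_{T_v}$, i.e.\ $l(s)<l(v)$. In particular $l(v)$ is the largest label occurring in the subtree $T_v$.

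From these two facts I would record the basic dichotomy for a node $v$ with $t=l(v)$. If $x,y\in L_{T_v}$, then the path joining the leaves $\{x\}$ and $\{y\}$ stays inside $T_v$, so by Lemma~\ref{l2} the distance $d(x,y)$ is a maximum of labels all bounded by $l(v)$, giving $d(x,y)\le t$. If instead $x\in L_{T_v}$ and $y\in X\setminus L_{T_v}$, then the lowest common ancestor $w$ of $\{x\}$ and $\{y\}$ cannot lie in $T_v$ (otherwise $\{y\}$ would be a descendant of $w$ and hence in $T_v$), so $w$ is a proper ancestor of $v$ and carries a label $l(w)>t$; since $w$ is interior to the joining path, Lemma~\ref{l2} forces $d(x,y)\ge l(w)>t$. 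Part~$(i)$ is then immediate: a leaf gives $L_{T_v}=\{x\}=B_0(x)$, and for an internal $v$ the dichotomy says exactly that $L_{T_v}=B_t(c)$ for any $c\in L_{T_v}$; in both cases $L_{T_v}\in\mathbf{B}_X$.

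For part~$(ii)$ I would fix a ball $B=B_r(c)$ and follow the chain of ancestors $\{c\}=v_0,v_1,\dots,v_m=X$ of the leaf $\{c\}$, whose labels satisfy $0=t_0<t_1<\cdots<t_m=\diam X$ by the strict monotonicity above. By part~$(i)$ each $L_{T_{v_j}}$ equals $B_{t_j}(c)$, and the same path-through-the-common-ancestor analysis shows that for every $x$ the distance $d(c,x)$ is exactly the label of the lowest common ancestor of $\{c\}$ and $\{x\}$; hence the distances from $c$ assume only the values $t_0,\dots,t_m$. Letting $j=\max\{i:t_i\le r\}$ (well defined since $t_0=0\le r$, with $j=m$ once $r\ge\diam X$), every $x$ with $d(c,x)\le r$ in fact satisfies $d(c,x)\le t_j$, so $B_r(c)=B_{t_j}(c)=L_{T_{v_j}}$, which is the node required by $(ii)$.

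The main obstacle I anticipate is the careful reading of Lemma~\ref{l2} that underlies the dichotomy, namely the claim that the distance between two leaves is the label of their lowest common ancestor. This needs the strict decrease of labels (so that the maximum over the up-and-down path is attained precisely at the common ancestor) together with the observation that a point outside $L_{T_v}$ is separated from it by a strictly larger label. Once this is secured, both parts are essentially bookkeeping about the nested family $B_{t_0}(c)\subsetneq\cdots\subsetneq B_{t_m}(c)=X$ and the reduction of an arbitrary radius $r$ to the largest attained distance $t_j\le r$.
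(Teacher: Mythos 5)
Your proof is correct. For part $(i)$ your argument coincides with the paper's: both show $L_{T_v}=B_{l(v)}(c)$ for any $c\in L_{T_v}$, with the forward inclusion from Lemma~\ref{l2} applied to a path lying inside $T_v$, and the reverse inclusion from the fact that a point outside $L_{T_v}$ is joined to $c$ through an ancestor of $v$ carrying a strictly larger label. For part $(ii)$ you take a slightly different route: the paper picks a diametrical pair $x,y\in B$, locates the node $v_i$ of maximal label $l(v_i)=d(x,y)=\diam B$ on the path joining $\{x\}$ and $\{y\}$, and sets $v=v_i$, implicitly using that $L_{T_{v_i}}=B_{\diam B}(x)=B$ (the last equality being the standard ultrametric fact that a ball equals the closed ball of radius $\diam B$ about any of its points); you instead walk up the ancestor chain of the center $\{c\}$, observe that the distances from $c$ are exactly the labels $t_0<\cdots<t_m$ of these ancestors, and round the given radius $r$ down to the largest attained value $t_j$, obtaining $B_r(c)=B_{t_j}(c)=L_{T_{v_j}}$. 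The two arguments have essentially the same cost; yours is somewhat more self-contained in that it makes explicit why the chosen node's leaf set equals $B$, a verification the paper leaves to the reader, while the paper's version is shorter because it bypasses the preliminary observation that distances from a fixed point are quantized along the ancestor chain.
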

\begin{proof}
$(i)$ In the case where $v$ is a leaf of $T_X$ statement $(i)$ is evident.  Let $v$ be an internal node of $T_X$ and let $\{t\}\in L_{T_v}$. Consider the ball
$$
	B_{l(v)}(t)=\{x \in X\colon d(x,t)\leqslant l(v)\}.
$$
Let $\{t_1\} \in L_{T_v}$, $t_1 \neq t$. Since the path joining $\{t\}$ and $\{t_1\}$ lies in the tree $T_v$,  we have $d(t,t_1)\leqslant l(v)$ (see Lemma~\ref{l2}). The inclusion $L_{T_v}\subseteq {B}_{l(v)}(t)$ is proved. Conversely, suppose there exists $t_0\in B_{l(v)}(t)$ such that $\{t_0\} \notin L_{T_v}$. Consider the path $(\{t_0\},v_1,...,v_n,\{t\})$. It is clear that $\max\limits_{1\leqslant i \leqslant n}l(v_i)>l(v)$, i.e., $d(t_0,t)>l(v)$.  We have a contradiction.

$(ii)$ In the case $|B|=1$ statement $(ii)$ is evident. Let  $B\in \mathbf B_X$ such that $|B|\geqslant 2$. Let $x,y \in B$ with $d(x,y) = \diam B$. Consider the path $(\{x\},v_1,...,v_n, \{y\})$ in the tree $T_X$. According to Lemma~\ref{l2} we have $d(x,y) = \max\limits_{1\leqslant i \leqslant n}l(v_i)$. Let $i$ be an index such that $d(x,y)=l(v_i)$. To prove $(ii)$ it suffices to set $v=v_i$.
\end{proof}

\begin{remark}\label{r2.3*}
Let $(X,d)$ be a finite ultrametric space with $|X|\geqslant 2$. Let us define a function $\rho\colon V(T_X)\times V(T_X)\to \mathbb R^+$ as \begin{equation}\label{e2.6*}
\rho(u,v) = 0 \ \, \text{ if } \ \, u=v \ \, \text{ and } \ \,
\rho (u,v) =\max\limits_{1\leqslant i \leqslant n}l(v_i) \ \, \text{ if } \ \, u\neq v
\end{equation}
and $(v_1,...,v_n)$ is the path joining $v_1=u$ and $v_n = v$ in $T_X$. The function $\rho$ is an ultrametric on $V(T_X)$ (cf. Lemma~\ref{l2}). Hence, by Proposition~\ref{lbpm}, the function $\rho$ is also an ultrametric on the ballean $\mathbf B_X$. It can be shown that $\rho\colon \mathbf B_X \times \mathbf B_X \to \mathbb R^+$ is the restriction of \emph{Hausdorff distance} on the ballean $\mathbf B_X$ (see, for example, ~\cite{BBI} for the definition and properties of Hausdorff distance). Since $(\mathbf B_X, \rho)$ is a finite ultrametric space we can consider the representing trees $T_{\mathbf B_X}$, $T_{\mathbf B_{\mathbf B_X}}$, $T_{\mathbf B_{\mathbf B_{\mathbf B_X}}}$,... and so on, see Figure~\ref{f1*}. Some interesting properties of the sequence  $\mathbf B_X$, $\mathbf B_{\mathbf B_X}$, $\mathbf B_{\mathbf B_{\mathbf B_X}}$,..., have been  described by Derong Qiu~\cite{QD} but related them properties of representing trees remain unexplored.
\end{remark}

\begin{figure}
\begin{tikzpicture}[scale=0.8]
\foreach \i in {0}
{
 \foreach \j in {0}
 {
\draw (\i+2,\j)  node [right] {$T_X$};
\draw (\i+1,\j)  -- (\i+3,\j-2);
\draw (\i+2,\j-1)  -- (\i+1,\j-2);
\draw (\i+1,\j)  -- (\i,\j-1);
 \foreach \c in {(\i+1,\j),(\i+3,\j-2),(\i+2,\j-1),(\i+1,\j-2),(\i,\j-1)}
  \fill[black] \c circle (2pt);
}
}

\foreach \i in {5}
{
 \foreach \j in {0}
 {
\draw (\i+2,\j)  node [right] {$T_{\mathbf B_X}$};
\draw (\i+1,\j)  -- (\i+3,\j-2);
\draw (\i+2,\j-1)  -- (\i+1,\j-2);
\draw (\i+1,\j)  -- (\i,\j-1);
\draw (\i+2,\j-1)  -- (\i+2,\j-2);
 \foreach \c in {(\i+1,\j),(\i+3,\j-2),(\i+2,\j-1),(\i+1,\j-2),(\i,\j-1), (\i+2,\j-2)}
  \fill[black] \c circle (2pt);
}
}

\foreach \i in {10}
{
 \foreach \j in {0}
 {
\draw (\i+2,\j)  node [right] {$T_{\mathbf B_{\mathbf B_X}}$};
\draw (\i+1,\j)  -- (\i+3,\j-2);
\draw (\i+2,\j-1)  -- (\i+1,\j-2);
\draw (\i+1,\j)  -- (\i,\j-1);

\draw (\i+2,\j-1)  -- (\i+1+2/3,\j-2);
\draw (\i+2,\j-1)  -- (\i+1+4/3,\j-2);
 \foreach \c in {(\i+1,\j),(\i+3,\j-2),(\i+2,\j-1),(\i+1,\j-2),(\i,\j-1), (\i+1+2/3,\j-2), (\i+1+4/3,\j-2)}
  \fill[black] \c circle (2pt);
}
}

\end{tikzpicture}
\caption{The representing trees $T_X$, $T_{\mathbf B_X}$, and $T_{\mathbf B_{\mathbf B_X}}$ for the ultrametric space $(X,d)$ with $X=\{x_1,x_2,x_3\}$ and $d(x_1,x_2)=d(x_2,x_3)>d(x_1,x_3)$. }
\label{f1*}
\end{figure}
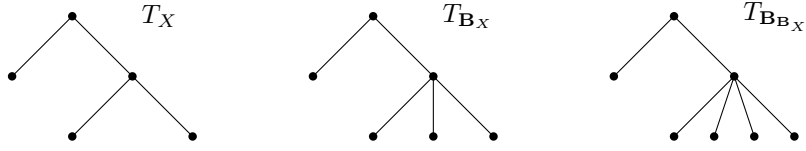

\begin{remark}\label{r2.4*}
The ultrametric $\rho$ defined by~(\ref{e2.6*}) can also be described as follows. Let us define a weight $w\colon E(T_X)\to \mathbb R^+$ such that, for all edges $\{u,v\}\in E(T_X)$,
$$
w(\{v,u\})=\max\{l(u), l(v)\},
$$
where $l(u)$ and $l(v)$ are defined by~(\ref{e2.7}). Then $\rho$ is the subdominant pseudoultrametric for the weight $w$ (see~\cite{DP2} and Theorem 10.40 in~\cite{SD} for details).
\end{remark}

\begin{theorem}\label{t1}
Let $(X,d)$ be a finite nonempty ultrametric space. The following conditions are equivalent.
\begin{itemize}
\item [$(i)$] The diameters of different nonsingular balls are different.
\item [$(ii)$] The internal labeling of $T_X$ is injective.
\item [$(iii)$] $G'_{t,X}$ is a complete multipartite graph for every $t\in \operatorname{Sp}(X)\setminus \{0\}$.
\item [$(iv)$] The equality
$$
	|\mathbf{B}_X|=|X|+|\Sp{X}|-1
$$
holds.
\end{itemize}
\end{theorem}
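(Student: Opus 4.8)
The plan is to treat condition $(ii)$, injectivity of the internal labeling, as the central hub and to establish $(i)\Leftrightarrow(ii)$, $(ii)\Leftrightarrow(iii)$ and $(ii)\Leftrightarrow(iv)$ separately, each leaning on Proposition~\ref{lbpm} and Lemma~\ref{c25}. The preliminary observation I would record first is that the nonsingular balls of $X$ are in bijection with the internal nodes of $T_X$. Indeed, by Proposition~\ref{lbpm} the map $v\mapsto L_{T_v}$ sends the node set of $T_X$ onto $\mathbf B_X$; the leaves correspond exactly to the singular balls; and since every internal node of a representing tree has at least two children (Theorem~\ref{t13} guarantees $k\geqslant 2$ parts at each splitting, so each $L_{T_v}$ for an internal $v$ has at least two points), the map $v\mapsto L_{T_v}$ is moreover injective. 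Combined with $\diam L_{T_v}=l(v)$, which is immediate from~(\ref{e2.7}), this dictionary makes $(i)\Leftrightarrow(ii)$ transparent: distinct nonsingular balls carrying distinct diameters is precisely the statement that $l$ takes distinct values on distinct internal nodes.

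For $(ii)\Leftrightarrow(iii)$ I would invoke Lemma~\ref{c25}: for each nonzero $t\in\Sp X$ the graph $G'_{t,X}$ is the disjoint union of $p$ complete multipartite graphs, where $p$ is the number of internal nodes labeled by $t$. Since a complete multipartite graph has at least two parts and is therefore connected, while each of the $p$ summands carries an edge, the union $G'_{t,X}$ is itself complete multipartite if and only if $p=1$. Because $l$ is surjective onto $\Sp X$ by Lemma~\ref{l2}, we have $p\geqslant 1$ for every nonzero $t$, so $(iii)$ holds exactly when $p=1$ for all such $t$, which is the injectivity of the internal labeling.

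For $(ii)\Leftrightarrow(iv)$ I would simply count the vertices of $T_X$. By Proposition~\ref{lbpm} one has $\abs{\mathbf B_X}=\abs{V(T_X)}$, and this splits as the $\abs X$ leaves plus the internal nodes. The internal labeling is a surjection onto $\Sp X\setminus\{0\}$, again by Lemma~\ref{l2}, so the number of internal nodes is at least $\abs{\Sp X}-1$, with equality precisely when this surjection is injective. Substituting then yields $\abs{\mathbf B_X}=\abs X+\abs{\Sp X}-1$ if and only if the internal labeling is injective, i.e.\ $(iv)\Leftrightarrow(ii)$.

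The two counting-and-bijection arguments are routine once the dictionary is in place. The step deserving the most care is the connectivity argument in $(ii)\Leftrightarrow(iii)$: I must verify that each multipartite summand furnished by Lemma~\ref{c25} genuinely has $k\geqslant 2$ parts, hence an edge and connectedness, so that a disjoint union of two or more of them can never be complete multipartite. The underlying subtlety, used throughout, is the fact that internal nodes of a representing tree always have at least two children; this is what simultaneously makes $v\mapsto L_{T_v}$ injective and guarantees that the summands in Lemma~\ref{c25} are nontrivial.
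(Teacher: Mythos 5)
Your proposal is correct and follows essentially the same route as the paper: the ball--node dictionary from Proposition~\ref{lbpm} for $(i)\Leftrightarrow(ii)$ and $(ii)\Leftrightarrow(iv)$, and the decomposition of $G'_{t,X}$ from Lemma~\ref{c25} together with the connectedness of a complete multipartite graph for $(ii)\Leftrightarrow(iii)$. Your explicit remark that every internal node has at least two children (which makes $v\mapsto L_{T_v}$ injective and each summand in Lemma~\ref{c25} nontrivial) is exactly the point the paper uses implicitly, so there is no gap.
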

\begin{proof}
The theorem is trivial if $|X|=1$. Suppose that $|X|\geqslant 2$. Proposition~\ref{lbpm} and the definition of the representing trees imply that $(i)$ and $(ii)$ are equivalent. The implication $(ii)\Rightarrow(iii)$ follows from Lemma~\ref{c25}.

Let us prove $(iii)\Rightarrow (ii)$.  Suppose that $G'_{t,X}$ is a complete multipartite graph for every  $t\in \operatorname{Sp}(X)\setminus \{0\}$. Consider the case where there exist exactly two different internal nodes $u$ and $v$ of $T_X$ with $l(u)=l(v)$. According to properties of the representing trees $u$ and $v$ are not incident. Let $u_1,..,u_{m}$ and $v_1,..,v_{n}$ be the direct successors of $u$ and $v$ respectively. Arguing as in proof of Lemma~\ref{c25} we see that  $G'_{l,X}= U\cup W$, where $l=l(u)=l(v)$ and
$$
U=U[L_{T_{u_1}}, \ldots, L_{T_{u_{m}}}],\ \, \, W=W[L_{T_{v_1}},\ldots ,L_{T_{v_{n}}}]\ \text{ and } \  V(U)\cap V(W)=\varnothing.
$$
We have a contradiction since the disjoint union of two complete multipartite graphs is disconnected. The case where the number of different internal nodes having equal labels is more than two is analogous.

The equivalence $(ii)\Leftrightarrow(iv)$ follows from Proposition~\ref{lbpm} and from the equality
$$
|Y|=|f(Y)|,
$$
which holds for every injective mapping $f$ defined on a set $Y$. The proof is completed.
\end{proof}

\begin{remark}\label{r2.5}
The inequality
\begin{equation}\label{e56}
|\Sp{X}|\leqslant |\mathbf{B}_X|-|X|+1
\end{equation}
holds for every finite nonempty ultrametric space $X$.
Indeed, it follows from Proposition~\ref{lbpm} that
$$
	|\mathbf B_X|=|V(T_X)|.
$$
Each node of $T_X$ is either an internal node or a leaf. Since $|\Sp{X}|-1$ is no more than the number of internal nodes and since the number of leaves is $|X|$, inequality~(\ref{e56}) holds. Thus, by Theorem~\ref{t1}, the internal labeling of $T_X$ is injective if and only if inequality~(\ref{e56}) becomes an equality.
\end{remark}

\begin{lemma}\label{s}
Let $X$ be a finite nonempty ultrametric space. Suppose that there is a natural number $n\geqslant 2$ such that the equality
\begin{equation}\label{x0}
(n-1)|\mathbf B_Y |+1 = n |Y|
\end{equation}
holds for every ball $Y \in \mathbf B_X$. Then the following statements are equivalent for every nonsingular ball $Y \in \mathbf B_X$.
\begin{itemize}
  \item [$(i)$] The equality $|Y|=n$ holds.
  \item [$(ii)$] All children of the node $Y$ are leaves of $T_X$.
\end{itemize}
\end{lemma}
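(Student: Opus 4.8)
The plan is to translate both conditions $(i)$ and $(ii)$ into a statement about the number of internal nodes of the representing tree of $Y$, and then to read off the equivalence directly from the arithmetic identity~(\ref{x0}).

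First I would identify the ballean $\mathbf{B}_Y$ with the subtree of $T_X$ rooted at $Y$. Regarding the ball $Y\in\mathbf{B}_X$ as an ultrametric space in its own right, its balls are precisely the balls of $X$ contained in $Y$: this is the standard nesting property of balls in an ultrametric space together with the fact that $Y$ is itself a ball. Since the diametrical graph, and hence the whole representing-tree construction, is intrinsic to the metric, the representing tree $T_Y$ of the subspace $Y$ coincides with the induced subtree $T(Y)$ of $T_X$ rooted at the node $Y$ (cf.~(\ref{e2.1})). By Proposition~\ref{lbpm} applied to $Y$ we then have $|\mathbf{B}_Y|=|V(T_Y)|$, and the leaves of $T_Y$ are exactly the one-point balls $\{y\}$ with $y\in Y$, so that $T_Y$ has precisely $|Y|$ leaves. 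Consequently the number of internal nodes of $T_Y$ equals $|\mathbf{B}_Y|-|Y|$.

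Next I would reformulate condition $(ii)$ in these terms. Since $Y$ is nonsingular, the root $Y$ of $T_Y$ is an internal node, and its children in $T_X$ are exactly the children of the root of $T_Y$, namely the parts $Y_1,\dots,Y_k$ of the diametrical graph $G_Y^d$ (see Theorem~\ref{t13}). All of these children are leaves if and only if $Y$ is the only internal node of $T_Y$, i.e.\ if and only if $|\mathbf{B}_Y|-|Y|=1$, equivalently $|\mathbf{B}_Y|=|Y|+1$. It then remains to combine this with the hypothesis $(n-1)|\mathbf{B}_Y|+1=n|Y|$. If $(i)$ holds, i.e.\ $|Y|=n$, then $|\mathbf{B}_Y|=\frac{n|Y|-1}{n-1}=\frac{n^2-1}{n-1}=n+1=|Y|+1$, so by the previous step all children of $Y$ are leaves and $(ii)$ holds. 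Conversely, if $(ii)$ holds, then $|\mathbf{B}_Y|=|Y|+1$, and substituting into~(\ref{x0}) gives $(n-1)(|Y|+1)+1=n|Y|$, which simplifies to $|Y|=n$, so $(i)$ holds.

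The computations here are routine; the only point requiring genuine care is the first step, namely the identification of $\mathbf{B}_Y$ with the subtree of $T_X$ rooted at $Y$, which rests on the fact that passing to the subspace $Y$ does not change which subsets of $Y$ are balls. Once this bookkeeping of leaves and internal nodes is in place, the equivalence of $(i)$ and $(ii)$ is immediate from~(\ref{x0}).
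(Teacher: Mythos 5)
Your proof is correct and follows essentially the same route as the paper: both directions hinge on the observation that condition $(ii)$ is equivalent to $|\mathbf{B}_Y|=|Y|+1$, combined with the arithmetic of~(\ref{x0}). The only cosmetic difference is that you phrase the key step via counting internal nodes of the subtree $T_Y$, whereas the paper argues directly that $|\mathbf{B}_Y|=n+1$ and $|Y|=n$ force every $B\in\mathbf{B}_Y$ to be either $Y$ or a singleton; your extra care in justifying the identification of $\mathbf{B}_Y$ with the subtree of $T_X$ rooted at $Y$ is a point the paper leaves implicit.
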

\begin{proof}
Let $|Y|=n$ hold. Then from~(\ref{x0}) it follows that
$$
(n-1)|\mathbf B_Y|=n^2-1.
$$
Since $n-1\neq 0$, the last equality implies that $|\mathbf B_Y|=n+1$. The set $Y$ and the one-point subsets of $Y$ are elements of $\mathbf B_Y$. Hence, the equalities $|\mathbf B_Y|= n+1$ and $|Y|=n$ give us either $B=Y$ or $|B|=1$ for every $B \in \mathbf B_Y$. Statement $(ii)$ follows from $(i)$.

The converse also holds. Indeed, suppose that $Y\in \mathbf B_X$ is nonsingular and all children of $Y$ are leaves. Then we have the equality
$$
|\mathbf B_Y|=|Y|+1.
$$
This equality and~(\ref{x0}) imply the equation
$$
(n-1)(|Y|+1)+1=n|Y|
$$
which has the unique solution $|Y|=n$.
\end{proof}

\begin{remark}\label{r2.7}
	If $|Y|=1$, then we have also $|\mathbf{B}_Y|=1$. Consequently, equality~\eqref{x0} holds for every ultrametric space $X$ and every positive integer number $n$ if~$Y$ a singular ball.
\end{remark}

\begin{figure}
\begin{tikzpicture}[yscale=0.75]
\draw (0,3) -- (1,6) -- (4,8) -- (7,6) -- (8,3);
 \foreach \i in {(4,8),(1,6),(4,6),(7,6),(0,3),(1,3),(2,3),(3,3),(4,3),(5,3),(6,3),(7,3),(8,3),(1,0),(2,0),(3,0),(4,0),(5,0),(6,0)}
  \fill[black] \i+(2pt,-1pt) arc(0:360:2pt and 2.66pt); %circle (3pt);
%  \fill[black] (0,0)  arc(0:360:3pt and 6pt);
\draw (1,3) -- (1,6) -- (2,3) -- (3,0);
\draw (3,3) -- (4,6) -- (5,3) -- (6,0);
\draw (6,3) -- (7,6) -- (8,3);
\draw (1,6) -- (1,3);
\draw (1,0) -- (2,3) -- (2,0);
\draw (4,6) -- (4,3);
\draw (4,0) -- (5,3) -- (5,0);
\draw (6,3) -- (7,6) -- (7,3);
\draw (4,8) -- (4,6);
\end{tikzpicture}
\caption{Example of strictly $3$-ary tree.}
\label{fig1}
\end{figure}
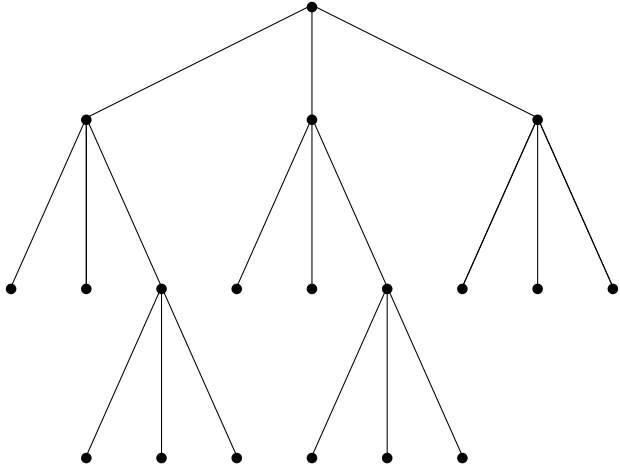

Let $(X,d)$ be a metric space. Recall that balls $B_1$, $\ldots$, $B_k$ in $(X,d)$ are \emph{equidistant} if there is $r>0$ such that $d(x_i, x_j)=r$ holds whenever $x_i \in B_i$ and $x_j \in B_j$ and $1\leqslant i< j \leqslant k$. Every two disjoint balls $B_1$, $B_2$  are equidistant in any ultrametric space.

\begin{theorem}\label{t27}
Let $(X,d)$ be a finite nonempty ultrametric space and let $n\geqslant 2$ be integer. The following conditions are equivalent.
\begin{itemize}
\item [$(i)$] $T_X$ is strictly $n$-ary.
\item [$(ii)$] For every nonzero $t\in \Sp{X}$, the graph $G'_{t,X}$ is the union of $p$ complete $n$-partite graphs, where $p$ is a number of all internal nodes of $T_X$ labeled by $t$.
\item [$(iii)$] For every nonsingular ball $B \in \mathbf B_X$, there are equidistant disjoint balls $B_1,...,B_n \in \mathbf B_X$ such that $B=\bigcup\limits_{j=1}^n B_j$.
\item [$(iv)$] Equality~(\ref{x0}) holds for every ball $Y\in \mathbf B_X$.
\end{itemize}
\end{theorem}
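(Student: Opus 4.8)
The plan is to establish the three equivalences $(i)\Leftrightarrow(ii)$, $(i)\Leftrightarrow(iii)$ and $(i)\Leftrightarrow(iv)$ separately, since together they make everything equivalent to $(i)$. Throughout I rely on Proposition~\ref{lbpm}, identifying the balls of $X$ with the nodes of $T_X$ via $B=L_{T_v}$, and on Lemma~\ref{c25} for the structure of the graphs $G'_{t,X}$. The case $|X|=1$ is trivial: there are no internal nodes, no nonzero spectrum values and no nonsingular balls, while $|\mathbf{B}_X|=|X|=1$ satisfies \eqref{x0}; so I assume $|X|\geqslant 2$. I also record the elementary fact that a \emph{connected} complete multipartite graph determines its number of parts uniquely, the parts being exactly the connected components of its complement. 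This is what lets me read off the arity of a node from $G'_{t,X}$, and I note that every internal node of $T_X$ has at least two children, so each component below is genuinely connected.

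For $(i)\Leftrightarrow(ii)$: by Lemma~\ref{c25}, for each nonzero $t$ the graph $G'_{t,X}$ is the disjoint union of the connected graphs $G^i_{t,X}$, one for each node $x_i$ labelled by $t$ (such nodes are internal, as $t\neq 0$), and $G^i_{t,X}$ is complete $k_i$-partite where $k_i$ is the number of direct successors of $x_i$. Since every internal node carries a nonzero label, $T_X$ is strictly $n$-ary exactly when $k_i=n$ for all such nodes and all nonzero $t$; by uniqueness of the part-count this is precisely the statement that every component $G^i_{t,X}$ is complete $n$-partite, which is $(ii)$.

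For $(i)\Leftrightarrow(iii)$: a nonsingular ball $B$ corresponds to an internal node $v$ with children $s_1,\dots,s_k$, and $B=\bigsqcup_{j=1}^{k}L_{T_{s_j}}$; by Lemma~\ref{l2} points in distinct $L_{T_{s_j}}$ are at distance $l(v)=\diam B$, so these sub-balls are disjoint and equidistant, giving $(iii)$ from $(i)$ with $B_j=L_{T_{s_j}}$. For the converse I must show that any decomposition $B=\bigsqcup_{j=1}^{n}C_j$ into disjoint equidistant balls forces $n=k$. First the common distance $r'$ equals $\diam B$: if $r'<\diam B$, pick $a,b\in B$ with $d(a,b)=\diam B$ and any $z$ in a different part (here $n\geqslant 2$ is used); then $d(a,b)\leqslant\max\{d(a,z),d(z,b)\}=r'<\diam B$, a contradiction. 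Now ``being at distance $<\diam B$'' is an equivalence relation whose classes are the $L_{T_{s_j}}$, and equidistance of the $C_j$ at $r'=\diam B$ forces each class into a single $C_j$; conversely a $C_j$ meeting two classes would be a proper sub-ball of diameter $\diam B$, impossible since every proper sub-ball of $B$ lies in some $L_{T_{s_j}}$ and hence has diameter $<\diam B$. Thus the $C_j$ coincide with the $L_{T_{s_j}}$ and $n=k$.

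For $(i)\Leftrightarrow(iv)$: if $T_X$ is strictly $n$-ary then so is every full subtree $T_v=T_Y$, and counting edges gives $|V(T_Y)|-1=n\cdot(\text{number of internal nodes of }T_Y)$, which together with $|V(T_Y)|=|\mathbf{B}_Y|$ and $(\text{number of leaves})=|Y|$ rearranges to \eqref{x0}, yielding $(iv)$. The converse is the main obstacle, and I would prove it by strong induction on $|Y|$, showing that every $T_Y$ is strictly $n$-ary. Singular balls are the base case by Remark~\ref{r2.7}. For a nonsingular $Y$ with children-balls $C_1,\dots,C_k$ one has $|\mathbf{B}_Y|=1+\sum_{i=1}^{k}|\mathbf{B}_{C_i}|$ and $|Y|=\sum_{i=1}^{k}|C_i|$; substituting the inductive identities $(n-1)|\mathbf{B}_{C_i}|+1=n|C_i|$ into \eqref{x0} for $Y$ collapses the equation to $n-k=0$, so $k=n$. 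Lemma~\ref{s} yields the same conclusion directly for the deepest internal nodes and could anchor an alternative pruning induction. Taking $Y=X$ gives $(i)$, completing the argument.
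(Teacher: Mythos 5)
Your proof is correct, and for $(i)\Leftrightarrow(ii)$, $(i)\Leftrightarrow(iii)$ and $(i)\Rightarrow(iv)$ it follows essentially the paper's route (Lemma~\ref{c25}, Proposition~\ref{lbpm}, and the edge count in a strictly $n$-ary tree), though you supply the details of $(iii)\Rightarrow(i)$ --- that the common distance must equal $\diam B$ and that the equidistant balls must then coincide with the children $L_{T_{s_j}}$ --- which the paper compresses into a single sentence. The genuine divergence is in $(iv)\Rightarrow(i)$. The paper first extracts the lower bound $|Y|\geqslant n$ for every nonsingular ball from~\eqref{x0}, and then runs an induction on $|X|$ in which a deepest nonsingular ball is collapsed to a single point $y^*$ and~\eqref{x0} is re-verified for the resulting smaller space, with Lemma~\ref{s} handling the base case. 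Your local count at an internal node $Y$ with children $C_1,\ldots,C_k$ --- substituting $(n-1)|\mathbf{B}_{C_i}|+1=n|C_i|$ into $|\mathbf{B}_Y|=1+\sum_{i}|\mathbf{B}_{C_i}|$ and $|Y|=\sum_{i}|C_i|$ to collapse~\eqref{x0} for $Y$ to $n-k=0$ --- is shorter and avoids both the quotient construction and the preliminary bound. In fact the strong induction you announce is not even needed: hypothesis $(iv)$ asserts~\eqref{x0} for \emph{every} ball of $X$, and each child $C_i$ is itself a ball of $X$ by Proposition~\ref{lbpm}, so the identity $k=n$ follows at every internal node by direct substitution, with Remark~\ref{r2.7} covering singular children. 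The one point worth making explicit (used tacitly by you and by the paper alike) is that the ballean of a ball $B\in\mathbf{B}_X$, regarded as a subspace, consists exactly of the balls of $X$ contained in $B$, which is what legitimizes the identity $|\mathbf{B}_Y|=1+\sum_{i}|\mathbf{B}_{C_i}|$ and the appeal to~\eqref{x0} for the subspaces $C_i$.
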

\begin{proof}
$(i)\Rightarrow(ii)$. This implication follows directly from Lemma~\ref{c25}.

$(ii)\Rightarrow(i)$. Let condition $(ii)$ hold and let $x_1,..,x_p$ be the internal nodes of~$T_X$ labeled by $t\in \Sp{X}\setminus \{0\}$. Consider the trees $T_{x_i}$ with the roots $x_i$, $i=1, \ldots, p$. It follows from the definition of the representing trees that the sets $L_{T_{x_1}},...,L_{T_{x_p}}$ are disjoint. Hence every complete $n$-partite graph from the union mentioned in condition~$(ii)$ is generated by an internal node $x_i$ labeled by $t$. Using~$(ii)$ we see that the node $x_i$ has exactly $n$ direct successors  for every $i\in \{1,...,p\}$.

The construction of the representing trees and Proposition~\ref{lbpm} yield the equivalence $(i)\Leftrightarrow (iii)$.

$(i)\Rightarrow(iv)$. Suppose $(i)$ holds. Let us denote by $N_{Y}$ the number of the internal nodes of $T_Y$ for arbitrary nonsingular $Y\in \mathbf B_Y$. It follows directly from the definition of the strictly  $n$-ary rooted trees and Proposition~\ref{lbpm} that $T_Y$ is strictly $n$-ary. Hence we have
$nN_{Y}+1=|V(T_Y)|$. Proposition~\ref{lbpm} implies
$|V(T_Y)|=|\mathbf B_Y|$ and $N_{Y}=|\mathbf B_Y|-|Y|$. Consequently,
$$
n(|\mathbf B_Y|-|Y|)+1 = |\mathbf B_Y|
$$
holds. The last equality and~(\ref{x0}) are equivalent.

$(iv)\Rightarrow(i)$. Let $(iv)$ hold. Equality~(\ref{x0}) implies that
\begin{equation}\label{x1}
|Y|=|\mathbf B_Y|+\frac{1}{n}-\frac{|\mathbf B_Y|}{n}
\end{equation}
for every nonsingular ball $Y\in \mathbf B_X$.
Since
\begin{equation}\label{x2}
|\mathbf B_Y|\geqslant |Y|+1,
\end{equation}
from ~(\ref{x1}) we obtain
$$
|\mathbf B_Y|\geqslant n+1.
$$
Now using~(\ref{x1}) we see that
$$
	|Y|=\left(1-\frac{1}{n}\right)|\mathbf B_Y|+\frac{1}{n} \geqslant \frac{(n-1)(n+1)}{n}+\frac{1}{n}=n.
$$
Thus every nonsingular ball $Y\in \mathbf B_X$ contains at least $n$ distinct points.

Now we can prove $(i)$ by induction on $|X|$.

It was proved that $|Y|\geqslant n$ holds for every nonsingular ball $Y\in\mathbf B_X$. Consequently we have $|X|\geqslant n$. If $|X|=n$, then statement $(i)$ follows from Lemma~\ref{s}. Suppose $(i)$ does not hold if $|X|=m$ but $(i)$ holds if $n \leqslant |X|<m$. The space $(X,d)$ contains a nonsingular ball $Y\in \mathbf B_Y$ such that all successors of the node $Y$ are leaves of $T_X$. Define a set~$X^*$ as
$$
X^*=(X\setminus Y)\cup \{y^*\},
$$
where $y^*$ is an arbitrary point of $Y$. Since $|Y|\geqslant 2$, we have the inequality
$$
|X^*|<|X|.
$$
To complete the proof of $(i)$ it suffices, by the induction hypothesis, to show that
\begin{equation}\label{s2}
(n-1)|\mathbf B_W|+1=n|W|
\end{equation}
holds for every $W\in \mathbf B_{X^*}$. Let $W\in \mathbf B_{X^*}$. Equality~(\ref{s2}) is trivial if $y^*\notin W$. Let $y^* \in W$. Then the set
$$
\widetilde{W}=W\cup Y
$$
is a nonsingular ball in $(X,d)$. It is easy to prove that
$$
|\widetilde{W}|=|W|+|Y|-1 \quad \text{and} \quad |\mathbf B_W|=|\mathbf B_{\widetilde W}|+|Y|.
$$
These  equalities and Lemma~\ref{s} give us
\begin{equation}\label{x4}
    |\widetilde{W}|=|W|+n-1 \ \text{ and } \ |\mathbf B_{\widetilde W}|=|\mathbf B_W|+n.
\end{equation}
Using~(\ref{x4}) and equality~(\ref{x0}) with $Y=\widetilde{W}$ we obtain
$$
(n-1)(|\mathbf B_W|+n)+1= n(|W|+n-1).
$$
Equality~(\ref{s2}) follows.
\end{proof}

Let $T$ be a rooted tree and let $v$ be a node of $T$. Denote by $\delta^+(v)$ the \emph{out-degree} of $v$, i.e., $\delta^+(v)$ is the number of children of $v$, and write
$$
	\Delta^+(T) = \max_{v \in V(T)} \delta^+(v),
$$
i.e., $\Delta^+(T)$ is the maximum out-degree of $V(T)$. It is clear that $v \in V(T)$ is a leaf of $T$ if and only if $\delta^+(v)=0$. Moreover, $T$ is strictly $n$-ary if and only if the equality
$$
	\delta^+(v)=n
$$
holds for every internal node $v$ of $T$. Let us denote by $I(T)$ the set of all internal nodes of $T$.

\begin{lemma}\label{l2.9}
Let $T$ be a finite rooted tree.	Then the inequality \begin{equation}\label{l2.9e1}
	|V(T)| \leqslant \Delta^+(T) |I(T)|+1
	\end{equation}
	holds. If $|V(T)| \geqslant 2$, then this inequality becomes the equality if and only if $T$ is strictly $n$-ary with $n=\Delta^+(T)$.
\end{lemma}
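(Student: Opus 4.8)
The plan is to count the edges of $T$ in two ways. Since $T$ is a tree it has exactly $|V(T)|-1$ edges, while the rooted structure gives a second count: every node other than the root is the child of a unique node, so assigning to each non-root node the edge joining it to its parent is a bijection onto $E(T)$. Summing out-degrees therefore recovers the edge count, $\sum_{v\in V(T)}\delta^+(v)=|E(T)|=|V(T)|-1$. As $\delta^+(v)=0$ precisely when $v$ is a leaf, only internal nodes contribute, so $\sum_{v\in I(T)}\delta^+(v)=|V(T)|-1$.

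From here the inequality is immediate. Each internal node satisfies $\delta^+(v)\leqslant\Delta^+(T)$ by definition of the maximum out-degree, hence
$$
|V(T)|-1=\sum_{v\in I(T)}\delta^+(v)\leqslant|I(T)|\,\Delta^+(T),
$$
which is exactly \eqref{l2.9e1} after transposing the $1$. (For $|V(T)|=1$ one has $I(T)=\varnothing$ and both sides equal $1$, so the inequality holds trivially.)

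For the equality statement assume $|V(T)|\geqslant 2$; then $T$ has at least one edge, hence at least one internal node, and $\Delta^+(T)\geqslant 1$. Equality in \eqref{l2.9e1} holds if and only if equality holds in the termwise bound $\delta^+(v)\leqslant\Delta^+(T)$ for every $v\in I(T)$, i.e.\ $\delta^+(v)=\Delta^+(T)$ at each internal node. By the characterization of strictly $n$-ary trees recalled just above the lemma, this is precisely the assertion that $T$ is strictly $n$-ary with $n=\Delta^+(T)$; conversely, if $T$ is strictly $n$-ary then every internal node has $\delta^+(v)=n$ while every leaf has out-degree $0$, so $\Delta^+(T)=n$ and all termwise inequalities become equalities.

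The argument is essentially bookkeeping, and I expect no serious obstacle. The one point deserving care is the double-counting identity $\sum_{v\in V(T)}\delta^+(v)=|V(T)|-1$, which rests on the bijection between non-root nodes and edges of the rooted tree; once this is in place, both the inequality and its equality case follow formally, the hypothesis $|V(T)|\geqslant 2$ serving only to guarantee that $I(T)\neq\varnothing$, so that the value $n=\Delta^+(T)$ appearing in the equality characterization is meaningful.
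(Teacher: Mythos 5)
Your proof is correct and follows essentially the same route as the paper: both rest on the identity $\sum_{v\in I(T)}\delta^+(v)=|E(T)|=|V(T)|-1$ together with the termwise bound $\delta^+(v)\leqslant\Delta^+(T)$, and both characterize equality by $\delta^+(v)=\Delta^+(T)$ at every internal node. The only difference is that you spell out the bijection between non-root nodes and parent edges, which the paper leaves as "clear."
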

\begin{proof}
 It is clear that \begin{equation}\label{l2.9e2}
	|E(T)| = \sum_{v \in I(T)} \delta^+(v).
	\end{equation}
	Since $|V(T)|=|E(T)|+1$ holds (see, for example, \cite[Corollary 1.5.3]{Di}) and we have
	\begin{equation}\label{l2.9e3}
	\sum_{v \in I(T)} \delta^+(v) \leqslant \Delta^+(T) |I(T)|,
	\end{equation}
	inequality~\eqref{l2.9e1} follows.
	
	It easy to see that inequalities~\eqref{l2.9e1} and~\eqref{l2.9e3} become equalities simultaneously. Since $\delta^+(v) \leqslant \Delta^+(T)$ holds for every $v \in V(T)$, inequality~\eqref{l2.9e3} becomes an equality if and only if we have
	$$
		\delta^+(v) = \Delta^+(T)	
	$$
	for every $v \in I(T)$. The last condition means that $T$ is strictly $n$-ary with $n = \Delta^+(T)$.
\end{proof}

\begin{corollary}\label{c2.10}
	The inequality
	\begin{equation}\label{c2.10e1}
		|\mathbf{B}_X| \geqslant \frac{\Delta^+(T_X)|X|-1}{\Delta^+(T_X)-1}
	\end{equation}
	holds for every finite nonempty ultrametric space $(X,d)$. This inequality becomes an equality if and only if $T_X$ is a strictly $n$-ary rooted tree with $n = \Delta^+(T_X)$.
\end{corollary}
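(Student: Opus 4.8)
The plan is to derive Corollary~\ref{c2.10} directly from Lemma~\ref{l2.9}, by rewriting the tree quantities $|V(T_X)|$ and $|I(T_X)|$ in terms of $|\mathbf{B}_X|$ and $|X|$ via Proposition~\ref{lbpm}.

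First I would dispose of the trivial case $|X|=1$: here $\mathbf{B}_X=\{X\}$, $\Delta^+(T_X)=0$, and both sides of~\eqref{c2.10e1} equal $1$, so the statement holds. Assume henceforth $|X|\geqslant 2$. Then the root of $T_X$ is an internal node and, since the diametrical graph is complete $k$-partite with $k\geqslant 2$ by Theorem~\ref{t13}, every internal node has at least two children; hence $\Delta^+(T_X)\geqslant 2$ and in particular $\Delta^+(T_X)-1\geqslant 1>0$.

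The key identities are supplied by Proposition~\ref{lbpm}: it gives $|\mathbf{B}_X|=|V(T_X)|$, while the leaves of $T_X$ are exactly the $|X|$ singular balls, so $|\overline{L}_{T_X}|=|X|$. Since every node is either internal or a leaf, this yields
$$
|I(T_X)|=|V(T_X)|-|X|=|\mathbf{B}_X|-|X|.
$$
Substituting $|V(T_X)|=|\mathbf{B}_X|$ and $|I(T_X)|=|\mathbf{B}_X|-|X|$ into inequality~\eqref{l2.9e1} of Lemma~\ref{l2.9} gives
$$
|\mathbf{B}_X|\leqslant \Delta^+(T_X)\bigl(|\mathbf{B}_X|-|X|\bigr)+1.
$$
Collecting the $|\mathbf{B}_X|$ terms produces $(1-\Delta^+(T_X))|\mathbf{B}_X|\leqslant 1-\Delta^+(T_X)|X|$, and dividing by the negative number $1-\Delta^+(T_X)$ — which reverses the inequality — yields exactly~\eqref{c2.10e1}.

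For the equality statement I would track that every manipulation above is an equivalence: the substitutions are exact equalities, and dividing by the nonzero $1-\Delta^+(T_X)$ is reversible. Hence~\eqref{c2.10e1} holds with equality precisely when~\eqref{l2.9e1} does. Since $|V(T_X)|=|\mathbf{B}_X|\geqslant 2$ under the assumption $|X|\geqslant 2$, the equality clause of Lemma~\ref{l2.9} applies and tells us that this occurs if and only if $T_X$ is strictly $n$-ary with $n=\Delta^+(T_X)$, which is the desired characterization. The only points demanding care are the sign of $1-\Delta^+(T_X)$ when dividing (hence the need to confirm $\Delta^+(T_X)\geqslant 2$) and the transfer of the equality condition through the algebra; neither is a genuine obstacle, so the statement is essentially a clean corollary of Lemma~\ref{l2.9}.
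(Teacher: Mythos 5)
Your proof is correct and follows essentially the same route as the paper: the trivial case $|X|=1$, then Proposition~\ref{lbpm} to translate $|V(T_X)|$ and $|I(T_X)|$ into $|\mathbf{B}_X|$ and $|\mathbf{B}_X|-|X|$, and substitution into Lemma~\ref{l2.9}. You merely make explicit the algebra (including the sign of $1-\Delta^+(T_X)$) that the paper compresses into the phrase that \eqref{c2.10e1} is an equivalent form of \eqref{l2.9e1}.
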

\begin{proof}
	If $|X|=1$, then we have $|\mathbf{B}_X|=1$ and $\Delta^+(T_X)=0$. Thus
	$$
	\frac{\Delta^+(T_X)|X|-1}{\Delta^+(T_X)-1}=\frac{0-1}{0-1}=1=|\mathbf{B}_X|.
	$$
	Suppose $|X| \geqslant 2$ holds. It follows from Proposition~\ref{lbpm} that
	$$
		|I(T_X)|=|\mathbf{B}_X|-|X| \quad \text{and} \quad |V(T_X)| = |\mathbf{B}_X|.
	$$
	Thus~\eqref{c2.10e1} is an equivalent form of~\eqref{l2.9e1} for $T=T_X$.
\end{proof}

\begin{remark}\label{r2.11}
	We have $\Delta^+(T_X)-1 \neq 0$ in inequality~\eqref{p2.11e1} (see Lemma~\ref{l3.4} of the present paper).
\end{remark}

Using Corollary~\ref{c2.10} and Remark~\ref{r2.5} we obtain
\begin{proposition}\label{p2.11}
	Let $(X, d)$ be a finite ultrametric space with $|X|\geq 2$. Then the inequality
	\begin{equation}\label{p2.11e1}
	2|\mathbf{B}_X| \geqslant |\operatorname{Sp}(X)| + \frac{2\Delta^+(T_X)|X|-\Delta^+(T_X)-|X|}{\Delta^+(T_X) - 1}
	\end{equation}
	holds. This inequality becomes an equality if and only if $T_X$ is a strictly $n$-ary rooted tree with injective internal labeling and $n=\Delta^+(T_X)$.
\end{proposition}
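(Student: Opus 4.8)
The plan is to obtain inequality~\eqref{p2.11e1} by simply \emph{adding} the two one-sided estimates already established, and then to read off the equality case from the (known) equality cases of those two estimates.

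Write $\Delta := \Delta^+(T_X)$. First I would record that the hypothesis $|X| \geq 2$ forces $\Delta \geq 2$: by Theorem~\ref{t13} the diametrical graph $G^d_X$ is complete $k$-partite with $k\geq 2$, so the root of $T_X$ has at least two children. In particular $\Delta - 1 \neq 0$ (this is the point flagged in Remark~\ref{r2.11}), so all the fractions below are well defined. Corollary~\ref{c2.10} gives
$$
|\B_X| \geq \frac{\Delta|X|-1}{\Delta-1},
$$
while Remark~\ref{r2.5} gives
$$
|\B_X| \geq |\Sp{X}| + |X| - 1.
$$
Adding these two bounds and collecting the two non-spectral terms over the common denominator $\Delta-1$, the one-line computation
$$
(|X|-1) + \frac{\Delta|X|-1}{\Delta-1} = \frac{(|X|-1)(\Delta-1)+(\Delta|X|-1)}{\Delta-1} = \frac{2\Delta|X|-\Delta-|X|}{\Delta-1}
$$
shows that the sum of the two bounds is exactly the right-hand side of~\eqref{p2.11e1}. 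This proves the inequality.

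For the equality case I would observe that the difference between the two sides of~\eqref{p2.11e1} equals
$$
\left(|\B_X| - \frac{\Delta|X|-1}{\Delta-1}\right) + \bigl(|\B_X| - |\Sp{X}| - |X| + 1\bigr),
$$
a sum of two quantities that are nonnegative by Corollary~\ref{c2.10} and Remark~\ref{r2.5} respectively. Hence it is zero if and only if each summand is zero, i.e. if and only if equality holds \emph{simultaneously} in Corollary~\ref{c2.10} and in Remark~\ref{r2.5}. By Corollary~\ref{c2.10} the first equality is equivalent to $T_X$ being strictly $n$-ary with $n=\Delta$, and by Remark~\ref{r2.5} (equivalently, by the equivalence $(ii)\Leftrightarrow(iv)$ of Theorem~\ref{t1}) the second is equivalent to the internal labeling of $T_X$ being injective. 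Conjoining these two conditions yields precisely the claimed characterization.

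I expect no genuine obstacle here: the whole argument is the additivity of two sharp inequalities, and the only steps that truly require care are confirming $\Delta \neq 1$ (so that dividing by $\Delta - 1$ is legitimate) and noting that equality in a sum of two nonnegative quantities forces equality termwise, which is what lets the two separate equality conditions be combined.
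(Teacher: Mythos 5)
Your proof is correct and follows essentially the same route as the paper: the authors likewise observe that the right-hand side of~\eqref{p2.11e1} is the sum of the right-hand sides of~\eqref{e56} and~\eqref{c2.10e1} and deduce both the inequality and the equality case from Remark~\ref{r2.5} and Corollary~\ref{c2.10}. Your additional justification that $\Delta^+(T_X)\geqslant 2$ (via Theorem~\ref{t13}) is exactly the point the paper delegates to Remark~\ref{r2.11}.
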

\begin{proof}
	Note that the right side of~\eqref{p2.11e1} is the sum of the right sides of inequalities~\eqref{e56} and~\eqref{c2.10e1}. The proposition follows from Remark~\ref{r2.5} and Corollary~\ref{c2.10}.
\end{proof}

\begin{corollary}\label{c2.12}
Let $(X,d)$ be a finite ultrametric space with $|X|\geqslant 2$ and let $n = \Delta^+(T_X)$. The following conditions are equivalent.
\begin{itemize}
  \item [$(i)$] $T_X$ is a strictly $n$-ary tree with injective internal labeling.
  \item [$(ii)$] $G'_{r,X}$ is a complete $n$-partite graph for every $r\in \Sp{X}$.
  \item [$(iii)$] The equality
  \begin{equation}\label{c2.12e1}
	  2|\mathbf{B}_X| = |\operatorname{Sp}(X)| + \frac{2\Delta^+(T_X)|X|-\Delta^+(T_X)-|X|}{\Delta^+(T_X) - 1}
  \end{equation}
  holds.
\end{itemize}
\end{corollary}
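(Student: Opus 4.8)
The plan is to establish the two equivalences $(i)\Leftrightarrow(iii)$ and $(i)\Leftrightarrow(ii)$ separately; together they close the cycle. The first is almost immediate from the machinery already assembled: condition $(iii)$ is precisely the equality case of inequality~\eqref{p2.11e1}, and since here $n$ is \emph{defined} to equal $\Delta^+(T_X)$, Proposition~\ref{p2.11} asserts that this equality holds if and only if $T_X$ is strictly $n$-ary with injective internal labeling, i.e. if and only if $(i)$ holds. So no new work is needed for $(i)\Leftrightarrow(iii)$, and the substance of the corollary lies in $(i)\Leftrightarrow(ii)$. Throughout, $r$ in $(ii)$ ranges over the nonzero elements of $\Sp{X}$, the only ones for which $G_{r,X}$ is defined; recall also that $n=\Delta^+(T_X)\geqslant 2$, since by Theorem~\ref{t13} the root of $T_X$ has at least two children.

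For $(i)\Rightarrow(ii)$ I would fix a nonzero $t\in\Sp{X}$ and count the internal nodes labeled by $t$. Nonzero labels occur only at internal nodes, and the labeling is surjective onto $\Sp{X}$ (the consequence of Lemma~\ref{l2} noted after its proof), so at least one internal node carries the label $t$; injectivity of the internal labeling, part of hypothesis $(i)$, forces this node to be unique, so in the notation of Lemma~\ref{c25} we have $p=1$. That lemma then identifies $G'_{t,X}$ with the single complete $k$-partite graph in~\eqref{eq25}, where $k$ is the number of direct successors of the unique node labeled $t$. Because $T_X$ is strictly $n$-ary, that node has exactly $n$ children, so $k=n$ and $G'_{t,X}$ is complete $n$-partite, which is $(ii)$.

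For the converse $(ii)\Rightarrow(i)$ I would read the same information backwards. A complete $n$-partite graph with $n\geqslant 2$ is connected, whereas Lemma~\ref{c25} exhibits $G'_{t,X}$ as a vertex-disjoint union of $p$ complete multipartite graphs, and a disjoint union of $p\geqslant 2$ nonempty graphs is disconnected; hence $(ii)$ forces $p=1$ for every nonzero $t$. Thus each nonzero spectral value is the label of exactly one internal node, so the internal labeling is injective (equivalently, invoke Theorem~\ref{t1}, since a complete $n$-partite graph is in particular complete multipartite). With $p=1$, formula~\eqref{eq25} shows the unique node labeled $t$ has exactly $n$ direct successors; as every internal node of $T_X$ carries some nonzero label, every internal node has exactly $n$ children, so $T_X$ is strictly $n$-ary, which is $(i)$.

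The one point requiring care—and the only genuine obstacle—is the passage between the ``union of $p$ complete $n$-partite graphs'' description of strictly $n$-ary trees in Theorem~\ref{t27} and the ``single complete $n$-partite graph'' demanded by $(ii)$. The bridge is the connectedness dichotomy: the union in Lemma~\ref{c25} is vertex-disjoint, so $G'_{t,X}$ is connected exactly when $p=1$, and a complete $n$-partite graph with $n\geqslant 2$ is connected. This is the same device used in the $(iii)\Rightarrow(ii)$ step of Theorem~\ref{t1}, and once it is in hand both directions of $(i)\Leftrightarrow(ii)$ reduce to bookkeeping with Lemma~\ref{c25} and the surjectivity of the labeling.
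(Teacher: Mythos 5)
Your proof is correct and follows essentially the route the paper intends: the paper states Corollary~\ref{c2.12} without a separate proof because $(i)\Leftrightarrow(iii)$ is exactly the equality case of Proposition~\ref{p2.11}, while $(i)\Leftrightarrow(ii)$ is the combination of Theorem~\ref{t1} and Theorem~\ref{t27} via Lemma~\ref{c25}, which is precisely the bookkeeping (including the $p=1$ connectedness argument) that you carry out explicitly. Your observation that $r$ must range over $\Sp{X}\setminus\{0\}$ in $(ii)$ correctly repairs a small slip in the statement and is consistent with Theorems~\ref{t1} and~\ref{t27}.
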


Theorem~\ref{t1.7} and Corollary~\ref{c2.12} show, in particular, that the equality $|X|=|\operatorname{Sp}(X)|$ holds if and only if we have~\eqref{c2.12e1} and $\Delta^+(T_X)=2$.

In the following proposition we suppose that the weighted graph $(G_Y, w)$ is defined as in Proposition~\ref{p10}.

\begin{proposition}\label{p2.13}
Let $(X,d)$ be a finite ultrametric space with $|X|\geqslant 3$ and let $n\geqslant 2$ be a natural number. If $T_X$ is strictly $n$-ary, then for every nonsingular ball~$Y\in  \B_X$, the graph $(G_Y,w)$ contains a Hamilton cycle with exactly $n$ edges of maximal weight.
\end{proposition}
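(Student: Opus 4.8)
The plan is to build the required cycle by splicing together Hamilton paths of the $n$ children of the ball $Y$. Write $r = \diam Y$; since $Y$ is nonsingular we have $r > 0$, and $r$ is exactly the maximal weight attained in $(G_Y, w)$. Because $T_X$ is strictly $n$-ary, the implication $(i)\Rightarrow(iii)$ of Theorem~\ref{t27} presents $Y$ as a disjoint union $Y = \bigcup_{j=1}^{n} B_j$ of equidistant balls $B_1, \ldots, B_n \in \mathbf{B}_X$. Applying Lemma~\ref{c25} at the internal node $Y$ (which is labeled by $r$), the sets $B_1, \ldots, B_n$ are precisely the parts of the corresponding complete $n$-partite component, so for $u,v \in Y$ the equality $d(u,v) = r$ holds if and only if $u$ and $v$ lie in distinct $B_j$. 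Hence every edge of $(G_Y, w)$ joining two different parts has the maximal weight $r$, while every edge inside a single part $B_j$ has weight strictly less than $r$.

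I would then construct the cycle explicitly. Inside each part fix an arbitrary enumeration of its points; as $(G_{B_j}, w)$ is complete, this enumeration traces a Hamilton path $P_j$ of $B_j$ all of whose edges have weight $< r$. Splice $P_1, \ldots, P_n$ into a single cycle by joining the last vertex of $P_j$ to the first vertex of $P_{j+1}$ for $j = 1, \ldots, n-1$, and joining the last vertex of $P_n$ back to the first vertex of $P_1$. The result is a Hamilton cycle of $(G_Y, w)$ in which exactly $n$ edges — the $n$ splicing edges — run between distinct parts and therefore carry the maximal weight $r$, whereas all remaining edges lie inside some $P_j$ and have weight $< r$. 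This produces a Hamilton cycle with exactly $n$ edges of maximal weight.

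The single delicate point, which I regard as the main obstacle to a perfectly uniform statement, is the smallest ball. The splicing yields a genuine cycle (on at least three vertices) as soon as $|Y| \geqslant 3$; this is automatic when $n \geqslant 3$, since then $|Y| \geqslant n \geqslant 3$, and it also holds whenever at least one part $B_j$ contains two or more points, because the two splicing edges incident to a one-point part are then distinct. The only exceptional configuration is $n = 2$ with $|B_1| = |B_2| = 1$, so that $|Y| = 2$ and the cyclic splicing degenerates into traversing the unique maximal-weight edge of $Y$ twice. I would handle this exactly as in the $n=2$ statement Proposition~\ref{p10}, namely by restricting attention to balls with $|Y| \geqslant 3$, or by adopting the convention that the two traversals of this edge count as the two maximal-weight edges of a degenerate Hamilton cycle. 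With this understood, the concatenation argument covers every nonsingular ball $Y \in \mathbf{B}_X$.
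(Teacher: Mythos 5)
Your proof is correct and takes essentially the same route as the paper: both decompose the nonsingular ball $Y$ into the leaf-sets of the $n$ children of the corresponding internal node of $T_X$, enumerate each part arbitrarily, and splice the resulting paths into a Hamilton cycle whose $n$ splicing edges are exactly the edges of maximal weight $\operatorname{diam} Y$. Your remark about the degenerate case $n=2$, $|Y|=2$ (where no cycle on two vertices exists under the paper's definition) identifies a genuine edge case that the paper's proof passes over in silence.
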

\begin{proof}
Let $T_X$ be strictly $n$-ary and let $Y\in \B_X$ with $|Y|> 1$. According to Proposition~\ref{lbpm} there exists an internal node $x_0$ of $T_X$ such that  $Y=L_{T_{x_0}}$. Since $T_X$ is strictly $n$-ary, $x_0$ has $n$ direct successors $x_1,...,x_n$. Consider the subtrees $T_{x_1},...,T_{x_n}$ with the roots  $x_1,...,x_n$. By Lemma~\ref{l2}, for any $x,y\in Y$, the equality $d(x,y) = l(x_0)$ holds if $x$ and $y$ belong to different sets $L_{T_{x_i}}$, $L_{T_{x_j}}$, $1\leqslant i,j\leqslant n$. Similarly we have $d(x,y) < l(x_0)$ if $x$ and $y$ belong to the same set $L_{T_{x_i}}$. It is easy to see that the cycle
$$
C=(x_{11},...,x_{1k_1},x_{21},...,x_{2k_2},...,x_{n1},...,x_{nk_n})
$$
is a Hamilton cycle in $(G_Y,w)$ and has exactly $n$ edges of maximal weight $l(x_0)=\diam Y$ where $$\{x_{11},...,x_{1k_1}\}=L_{T_{x_1}},..., \{x_{n1},...,x_{nk_n}\}=L_{T_{x_n}}.$$
\end{proof}

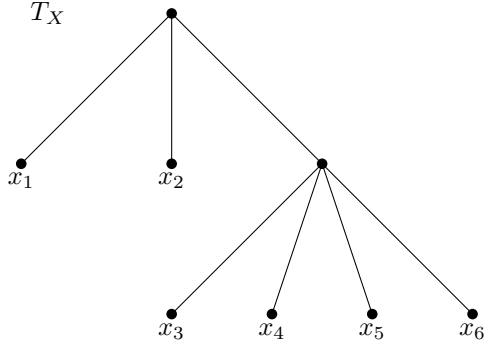
\begin{figure}
\begin{tikzpicture}[scale=1]
\draw (0,4) node [right] {$T_X$};
\draw (0,2) node [below] {$x_1$} -- (2,4) --  (4,2) -- (6,0) node [below] {$x_6$};
\fill[black] (0,2) circle(2pt); \fill[black] (2,4) circle(2pt); \fill[black] (4,2) circle(2pt); \fill[black] (6,0) circle(2pt);
\draw (2,4)  -- (2,2) node [below] {$x_2$};\fill[black] (2,2) circle(2pt);
\draw (4,2)  -- (2,0) node [below] {$x_3$};\fill[black] (2,0) circle(2pt);
\draw (4,2)  -- (10/3,0) node [below] {$x_4$};\fill[black] (10/3,0) circle(2pt);
\draw (4,2)  -- (14/3,0) node [below] {$x_5$};\fill[black] (14/3,0) circle(2pt);
\end{tikzpicture}
\caption{$T_X$ is not strictly $4$-ary.}
\label{fig2}
\end{figure}

\begin{remark}\label{r2.15}
The inversion of Proposition~\ref{p2.13} is false. The existence, for every nonsingular ball $Y \in \mathbf{B}_X$, of a Hamilton cycle with exactly $n$ edges of maximal weight in $(G_Y,w)$, does not guarantee that the rooted tree $T_X$ is strictly $n$-ary.
\end{remark}

\begin{example}\label{ex2.16}
	Let $(X,d)$ be an ultrametric space with $X=\{x_1,x_2,x_3,x_4,x_5,x_6\}$ and with $T_X$ depicted at Figure~\ref{fig2}. There are only two nonsingular balls $B_1=X$ and $B_2=\{x_3,x_4,x_5,x_6\}$ in $(X,d)$. It is clear that both Hamilton cycles $C_1=(x_1,x_3,x_2,x_4,x_5,x_6)$ and $C_2=(x_3,x_4,x_5,x_6)$ of the weighted graphs $(G_{B_1},w)$ and $(G_{B_2},w)$, respectively, have exactly four edges of maximal weight.
\end{example}

\section{Mappings of ultrametric spaces and morphisms of rooted trees}

In this section we describe some interrelations between mappings of finite ultrametric spaces and morphisms of their representing trees. First of all we recall the definition of isomorphic rooted trees.

\begin{definition}\label{d3.1}
Let $T_1=T_1(r_1)$  and $T_2=T_2(r_2)$ be rooted trees. A bijection $f\colon V(T_1)\to V(T_2)$ is an isomorphism of $T_1$ and $T_2$ if $f(r_1)=r_2$ and
\begin{equation}\label{d3.1e1}
(\{u,v\} \in E(T_1))\Leftrightarrow (\{f(u),f(v)\} \in E(T_2))
	\end{equation}
	holds for all $u$, $v \in V(T)$. The rooted trees $T_1$ and $T_2$ are isomorphic if there exists an isomorphism $f\colon V(T_1)\to V(T_2)$.
\end{definition}

In what follows we write $T_1(r_1) \simeq T_2(r_2)$ if the rooted trees $T_1(r_1)$ and $T_2(r_2)$ are isomorphic.

Let $X$ and $Y$ be metric spaces. The mapping $F\colon X\to Y$ is ball-preserving if for all $Z\in \textbf{B}_X$ and $W\in \textbf{B}_Y$, the relations
$$
F(Z)\in \textbf{B}_Y \,\text{ и } \, F^{-1}(W)\in \textbf{B}_X
$$  hold, where $F(Z)$ is the image of the set $Z$ under the mapping $F$ and $F^{-1}(W)$ is the  preimage of the set $W$ under this mapping.
\begin{theorem}\label{t32}
Let $X$ and $Y$ be finite ultrametric spaces. Then $T_X$ and $T_Y$ are isomorphic as rooted trees if and only if there exists a bijective ball-preserving mapping $f\colon X\to Y$.
\end{theorem}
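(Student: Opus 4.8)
The plan is to use Proposition~\ref{lbpm} as a dictionary identifying, for each finite ultrametric space $X$ with $|X|\geqslant 2$, the vertex set $V(T_X)$ with the ballean $\mathbf{B}_X$ through the bijection $\Phi_X\colon v\mapsto L_{T_v}$. Under this identification the root corresponds to the whole space $X$, and the leaves correspond to the singular balls. The first thing I would establish — and I regard this as the structural heart of the argument — is that the edges of $T_X$ are \emph{exactly} the covering pairs of the inclusion order on $\mathbf{B}_X$: a node $s$ is a child of $v$ if and only if $L_{T_s}\subsetneq L_{T_v}$ with no ball strictly in between. This follows from the construction of the representing tree, since the children of $v$ are the parts of the diametrical graph of the ball $L_{T_v}$, which are precisely its maximal proper subballs, together with the injectivity of $\Phi_X$. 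In other words, $T_X$ is the Hasse diagram of $(\mathbf{B}_X,\subseteq)$ rooted at its maximum $X$. We may assume $|X|,|Y|\geqslant 2$, the one-point case being trivial (both trees are single nodes and every bijection between singletons is ball-preserving).

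For the \emph{if} direction I would argue as follows. Given a bijective ball-preserving map $f\colon X\to Y$, the assignments $B\mapsto f(B)$ and $W\mapsto f^{-1}(W)$ send $\mathbf{B}_X$ into $\mathbf{B}_Y$ and $\mathbf{B}_Y$ into $\mathbf{B}_X$ respectively, and since $f$ is a bijection they are mutually inverse, so $\hat f\colon \mathbf{B}_X\to\mathbf{B}_Y$, $\hat f(B)=f(B)$, is a bijection. Bijectivity of $f$ also gives $B_1\subseteq B_2\Leftrightarrow f(B_1)\subseteq f(B_2)$ (the reverse implication by applying $f^{-1}$), so $\hat f$ is an isomorphism of inclusion posets carrying the maximum $X$ to the maximum $Y$. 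An order isomorphism preserves the covering relation, hence $\varphi=\Phi_Y^{-1}\circ\hat f\circ\Phi_X\colon V(T_X)\to V(T_Y)$ is a bijection mapping edges to edges and root to root; by Definition~\ref{d3.1} it is a rooted-tree isomorphism.

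For the \emph{only if} direction, starting from a rooted-tree isomorphism $\varphi\colon V(T_X)\to V(T_Y)$, I would note that fixing the root and preserving adjacency forces $\varphi$ to preserve the ancestor–descendant relation, hence to map each subtree $T_v$ onto $T_{\varphi(v)}$ and leaves onto leaves. Restricting $\varphi$ to the leaves and composing with the point–leaf correspondence produces a bijection $f\colon X\to Y$. Then for any ball $B=L_{T_v}$ the leaves of $T_v$ map to the leaves of $T_{\varphi(v)}$, so $f(B)=L_{T_{\varphi(v)}}\in\mathbf{B}_Y$; the same argument applied to $\varphi^{-1}$ yields $f^{-1}(W)\in\mathbf{B}_X$ for every $W\in\mathbf{B}_Y$. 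Thus $f$ is a bijective ball-preserving map.

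The main obstacle is the preliminary structural step: proving that the purely combinatorial edge relation of the representing tree coincides with the covering relation of the ball-inclusion order, so that a map which a priori respects only the \emph{set} of balls is automatically forced to respect the whole tree structure. Once this dictionary is secured both implications are short transfers. A minor point to keep track of throughout is that $\Phi_X$ sends leaves to singular balls, which is what guarantees that the constructed correspondence is genuinely a bijection between the underlying point sets.
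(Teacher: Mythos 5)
Your proposal is correct and follows essentially the same route as the paper: both directions hinge on identifying $V(T_X)$ with $\mathbf{B}_X$ via Proposition~\ref{lbpm} and recognizing that adjacency in $T_X$ is exactly the covering relation of the inclusion order on balls (this is the content of the paper's implications~(\ref{e101})--(\ref{e103})), while the converse is obtained, as in the paper, by restricting the tree isomorphism to the leaves/singular balls and noting that subtrees are carried to subtrees. Your explicit formulation of $T_X$ as the Hasse diagram of $(\mathbf{B}_X,\subseteq)$ is a clean packaging of the same idea rather than a different argument.
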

\begin{proof}
Suppose first that $f\colon X\to Y$  is bijective and ball-preserving. Then the mapping
\begin{equation}\label{x1s}
\mathbf B_X \ni B\mapsto f(B) \in \mathbf B_Y
\end{equation}
is also bijective. According to Proposition~\ref{lbpm} there exist the bijections
$$
\Gamma_1\colon V(T_X) \to \mathbf B_X \, \ \text{  and } \ \,  \Gamma_2\colon V(T_Y) \to \mathbf B_Y,
$$
 such that
 $\Gamma_1(v)=L_{T_v}$ and $\Gamma_2(u)=L_{T_u}$ for all $v\in V(T_X)$ and  $u\in V(T_Y)$ (see~(\ref{e2.1})). Define the mapping $\Phi\colon V(T_X)\to V(T_Y)$ by
$$
\Phi(u)=\Gamma_2^{-1}(f(\Gamma_1(u))).
$$
Since $f$ is ball-preserving, $\Phi$ is well-defined. The bijectivity of $\Phi$ follows from the bijectivity of $\Gamma_1$, $\Gamma_2$ and of the mapping defined by~(\ref{x1s}).

For the root $X$ of $T_X$ we have $\Gamma_1(X)=X$, $f(X)=Y$ and $\Gamma_2^{-1}(Y)=Y$. Hence $\Phi(X)=\Gamma_2^{-1}f(\Gamma_1(X))=Y$, i.e. $\Phi(X)$ is the root of $T_Y$. Consequently to prove that $\Phi$ is an isomorphism of $T_X$ and $T_Y$ we have to establish  equivalence~(\ref{d3.1e1})
 with $T_1=T_X$, $T_2=T_Y$ for all $u,v \in V(T_X)$.
It is easy to see that the left part of~(\ref{d3.1e1}) holds if and only if the implication
\begin{equation}\label{e101}
 (\Gamma_1(u)\subseteq B \subseteq \Gamma_1(v)) \Rightarrow ((\Gamma_1(u)=B)\vee(\Gamma_1(v)=B))
\end{equation}
holds for every $B\in \mathbf B_X$. Similarly the right part of~(\ref{d3.1e1}) holds if and only if
\begin{equation}\label{e102}
(\Gamma_2(\Phi(u))\subseteq B \subseteq \Gamma_2(\Phi(v))) \Rightarrow ((\Gamma_2(\Phi(u))=B)\vee(\Gamma_2(\Phi(v))=B))
\end{equation}
for every $B \in \mathbf B_X$.
Using~(\ref{x1s}) we can rewrite~(\ref{e101}) as
\begin{equation}\label{e103}
(f(\Gamma_1(u))\subseteq f(B) \subseteq f(\Gamma_1(v))) \Rightarrow ((f(\Gamma_1(u))=f(B))\vee(f(\Gamma_1(v))=f(B))).
\end{equation}
Taking into consideration that $\Gamma_2(\Phi(u))=f(\Gamma_1(u))$ we see that ~(\ref{e103}) is equivalent to ~(\ref{e102}).

Conversely, suppose that $T_X\simeq T_Y$ with the isomorphism $F\colon V(T_X)\to V(T_Y)$. Denote by $B_X^1$ and $B_Y^1$  the sets of singular balls of the spaces $X$ and $Y$ respectively.
Since $F$ is an isomorphism,  the mapping
\begin{equation*}
f^*=F|_{B_X^1}\colon B_X^1\to B_Y^1
\end{equation*}
is a bijection. Define a mapping $f\colon X\to Y$ as follows
$$
(f(x)=y) \ \, \Leftrightarrow \ \, (f^*(\{x\})=\{y\}).
$$
We claim that $f$ is ball-preserving.

It is clear that for $B\in \mathbf B_X$ the image $f(B)$ is  a ball in $Y$ if and only if the equality
\begin{equation}\label{eq110}
f^*\left(\bigcup\limits_{x\in B}\{\{x\}\}\right) = \bigcup\limits_{y \in B'} \{\{y\}\}
\end{equation}
holds  for some $B'\in \mathbf B_Y$.

Let $B$ be a ball in $X$.
Let $b\in V(T_X)$ such that $\Gamma_1(b)=B$. Since $T_X \simeq T_Y$ and $F$ is an isomorphism we have
\begin{equation}\label{eq120}
T_b\simeq T_{F(b)},
\end{equation}
where $T_b$ and $T_{F(b)}$ are the corresponding subtrees of $T_X$ and $T_Y$, respectively (see~(\ref{e2.1})).

Let
$$
\widetilde{\Gamma}_1(b)=\bigcup\limits_{x\in B}\{\{x\}\} \ \text{ and  } \ \widetilde{\Gamma}_2(F(b))=\bigcup\limits_{y\in \Gamma_2(F(b))}\{\{y\}\}.
$$
Since there is the one-to-one correspondence between the set of leaves of representing tree and the set of points of the corresponding ultrametric space, by~(\ref{eq120}) we have $f^* (\widetilde{\Gamma}_1(b)) = \widetilde{\Gamma}_2 (F(b))$ which imply~(\ref{eq110}).

The arguing for $f^{-1}$ is analogous.
\end{proof}

\begin{remark}
The proof of Theorem~\ref{t32} is a modification of the proof form~\cite{P(TIAMM)}. Another proof of this result was suggested by Anton Lunyov~\cite{LPD}.
\end{remark}

\begin{definition}\label{d3.2}
	Let $T_i=T_i(r_i,l_i)$ be labeled rooted trees with the roots $r_i$ and the labelings $l_i\colon V(T_i)\to L_i$, $i=1$, $2$ such that $L_1=L_2$. An isomorphism $f\colon V(T_1) \to V(T_2)$ of the rooted trees $T_1(r_1)$ and $T_2(r_2)$ is an isomorphism of the labeled rooted trees $T_1(r_1,l_1)$ and $T_2(r_2,l_2)$ if the equality \begin{equation}\label{d3.2e1}
		l_2(f(v))=l_1(v)
	\end{equation}
holds for every $v \in V(T_1)$. The labeled rooted trees $T_1(r_1,l_1)$ and $T_2(r_2,l_2)$ are isomorphic if there is an isomorphism of these trees.
\end{definition}

Recall that two metric spaces $(X,d)$ and $(Y, \rho)$ are isometric if there is a bijection $f\colon X\to Y$ such that the equality
$$
d(x,y)=\rho(f(x),f(y))
$$
holds for all $x$, $y \in X$.

The following result was formulated in~\cite[Theorem 2.6]{DPT(Howrigid)} with a sketch of the proof.

\begin{theorem}\label{l3.3}
	Let $(X,d)$ and $(Y, \rho)$ be nonempty finite ultrametric spaces. Then the labeled rooted trees $T_X$ and $T_Y$ with the labelings defined as in~\eqref{e2.7} are isomorphic if and only if $(X,d)$ and $(Y, \rho)$ are isometric.
\end{theorem}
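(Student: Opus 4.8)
The plan is to prove both implications by exploiting the fact, recorded after Proposition~\ref{lbpm}, that the representing tree together with its labeling completely encodes the distance function through Lemma~\ref{l2}: the distance between two points is the maximal label occurring on the path joining the corresponding leaves. Throughout I regard both labelings as taking values in the common label set $\mathbb R^+$, so that Definition~\ref{d3.2} applies.

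First I would treat the easy direction. Suppose $g\colon X\to Y$ is an isometry. An isometry preserves distances, hence carries each ball $B_r(c)$ onto the ball $B_r(g(c))$; in particular $g$ is bijective and ball-preserving. By Theorem~\ref{t32} the induced map $\Phi=\Gamma_2^{-1}\circ g_*\circ\Gamma_1$, where $g_*(B)=g(B)$ and $\Gamma_1,\Gamma_2$ are the node-to-ball bijections furnished by Proposition~\ref{lbpm}, is an isomorphism of the \emph{unlabeled} rooted trees $T_X$ and $T_Y$. It remains to check that $\Phi$ respects labels. The key observation is that, by~\eqref{e2.7} and the construction of the representing tree, the label of a node $v$ equals the diameter of the corresponding ball, $l(v)=\diam L_{T_v}$. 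Since $\Phi$ sends the node carrying the ball $B$ to the node carrying the ball $g(B)$, and an isometry preserves diameters, $l_2(\Phi(v))=\diam g(L_{T_v})=\diam L_{T_v}=l_1(v)$. Hence $\Phi$ is an isomorphism of labeled rooted trees.

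For the converse, suppose $f\colon V(T_X)\to V(T_Y)$ is an isomorphism of the labeled rooted trees. Because $f$ fixes the root and preserves adjacency, it preserves graph distance to the root and therefore the parent--child relation; consequently $f$ maps leaves to leaves. Identifying a leaf with the unique point of the singular ball it represents, the restriction of $f$ to the leaves yields a bijection $\phi\colon X\to Y$. I would then verify that $\phi$ is an isometry. Given distinct $x_1,x_2\in X$, let $(u_1,v_1,\ldots,v_n,u_2)$ be the path joining the corresponding leaves in $T_X$; since $f$ is a graph isomorphism it carries this path onto the path $(f(u_1),f(v_1),\ldots,f(v_n),f(u_2))$ joining the images in $T_Y$. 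Applying Lemma~\ref{l2} in both spaces together with the label-preservation property~\eqref{d3.2e1} gives
\[
\rho(\phi(x_1),\phi(x_2))=\max_{1\leqslant i\leqslant n}l_2(f(v_i))=\max_{1\leqslant i\leqslant n}l_1(v_i)=d(x_1,x_2),
\]
so $\phi$ is the desired isometry.

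The routine points above --- that an isometry is ball-preserving and that diameters are preserved --- are immediate. The one place demanding a little care, which I regard as the main (though modest) obstacle, is the converse: one must be sure that the purely combinatorial isomorphism $f$ sends leaves to leaves and, crucially, transports the leaf-to-leaf path onto the corresponding path in $T_Y$, so that Lemma~\ref{l2} can be invoked on both sides. This rests only on the uniqueness of paths in a tree and on the fact that a root-preserving tree isomorphism preserves the parent--child orientation; once this is in place, the label identity~\eqref{d3.2e1} delivers the equality of distances at once.
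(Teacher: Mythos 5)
Your proof is correct and follows essentially the same route as the paper: both directions rest on Proposition~\ref{lbpm} (the nodes of $T_X$ are exactly the balls of $X$) together with Lemma~\ref{l2}, and the converse is obtained, exactly as in the paper, by restricting the labeled tree isomorphism to the leaves and reading off distances as maxima of labels along the unique connecting path. The only cosmetic difference is that in the forward direction you obtain the unlabeled tree isomorphism by citing Theorem~\ref{t32} (an isometry is ball-preserving) and then verify label preservation via diameters, whereas the paper re-derives the adjacency condition directly from the inclusion characterization of direct successors; the two amount to the same computation.
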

\begin{proof}
	Let $\Psi\colon X \to Y$ be an isometry. It follows from Proposition~\ref{lbpm} and the definition of the representing trees that
	$$
	V(T_X)=\mathbf{B}_X \quad \text{and} \quad V(T_Y)=\mathbf{B}_Y.
	$$
	Since $\Psi\colon X \to Y$ is an isometry, the set
	$$
	\{\Psi(x)\colon x \in A\}, \quad A \subseteq X
	$$
	is a ball in $(Y, \rho)$ if and only if $A$ is a ball in $(X,d)$. Consequently we can define a bijection
	$f\colon \mathbf{B}_X \to \mathbf{B}_Y$ by the rule
	\begin{equation}\label{l3.3e1}
	f(B)=\{\Psi(x)\colon x \in B\}, \quad B \in \mathbf{B}_X.
	\end{equation}
	A ball $B_1 \in \mathbf{B}_X$ is a direct successor of a ball $B_2 \in \mathbf{B}_X$ if and only if $B_1\neq B_2$ and $B_1\subseteq B_2$ and
	\begin{equation}\label{l3.3e2}
	(B_1\subseteq B \subseteq B_2) \Rightarrow ((B_1=B) \vee (B_2=B))
	\end{equation}
holds for every $B \in \mathbf{B}_X$. Since $\Psi$ is bijective, we have $(\Psi(B_1) \neq \Psi(B_2))\Leftrightarrow(B_1\neq B_2)$ and $(\Psi(B_1) \subseteq \Psi(B_2))\Leftrightarrow(B_1\subseteq B_2)$ and \eqref{l3.3e2} is an equivalent of
	$$
	(\Psi(B_1)\subseteq \Psi(B) \subseteq \Psi(B_2)) \Rightarrow ((\Psi(B_1)=\Psi(B)) \vee (\Psi(B_2)=\Psi(B))).
	$$
	Thus~\eqref{d3.1e1} holds for all $B_1$, $B_2 \in \mathbf{B}_X$ if $f$ is defined by~\eqref{l3.3e1} and $T_1=T_X$ and $T_2=T_Y$. Moreover, for $T_1=T_X$ and $T_2=T_Y$, equality~\eqref{d3.2e1} is an equivalent for
	$$
	\operatorname{diam}(\Psi(B))=\operatorname{diam}(B),
	$$
	that holds because $\Psi$ is an isometry. Moreover it is clear that $f(X)=Y$. Hence the labeled rooted trees are isomorphic if $(X,d)$ and $(Y, \rho)$ are isometric.
	
	The converse is also valid. To see it, note that the restriction of an isomorphism
	$$
	f\colon V(T_X) \to V(T_Y)
	$$
	of labeled rooted trees $T_X$ and $T_Y$ on the set $\overline{L}_T$ of leaves of $T_X$ gives us the bijection
	\begin{equation}\label{l3.3e3}
	\overline{L}_{T_X} \ni v \mapsto f(v) \in \overline{L}_{T_Y}.
	\end{equation}
	Since $v$ and $f(v)$ are some one-point subsets of $X$ and of $Y$ respectively, there is the unique bijection $\Psi\colon X\to Y$ such that
	$$
	(\Psi(x)=y) \Leftrightarrow (f(\{x\})=\{y\})
	$$
	holds for all $x \in X$ and $y \in Y$. Lemma~\ref{l2} implies that $\Psi$ is an isometry of $(X,d)$ and $(Y, \rho)$.
\end{proof}

In the rest of the section we will consider a class of mappings (of finite ultrametric spaces) which is wider than the class of isometries but narrower than the class of ball-preserving mappings.
\begin{definition}\label{d3.1.1}
Let $(X_1, d_1)$ and $(X_2, d_2)$
be finite ultrametric spaces. A bijection $\Phi \colon X_1\to X_2$ is a weak similarity if there is a strictly increasing bijective function $f\colon \Sp{X_1} \to \Sp{X_2}$ such that the equality
\begin{equation}\label{ex1}
d_1(x,y)=f(d_2(\Phi(x), \Phi(y)))
\end{equation}
holds for all $x,y \in X_1$.
We say that $(X_1,d_1)$ and $(X_2,d_2)$ are weakly similar if there is a weak similarity $\Phi\colon X_1\to X_2$.
\end{definition}
\begin{remark}\label{r3.1.2}
The weak similarities of arbitrary semimetric spaces were studied in~\cite{DP}. See also ~\cite{KL} and references therein for some results related to weak similarities of subsets of Euclidean finite-dimensional spaces.
\end{remark}

\begin{definition}\label{d3.1.2}
Let $T_i=T_i(r_i,l_i)$ be finite labeled rooted trees with the roots $r_i$ and the surjective labelings $l_i\colon V(T_i)\to L_i$, $L_i\subseteq \mathbb R^+$,  $i=1,2$. An isomorphism $f\colon V(T_1)\to V(T_2)$ of rooted trees $T_1(r_1)$ and $T_2(r_2)$ is a weak isomorphism of labeled rooted trees $T_1(r_1,l_1)$ and $T_2(r_2,l_2)$ if there is a strictly increasing bijection $\Psi\colon L_2 \to L_1$ such that the diagram
\begin{equation}\label{ex2}
\begin{diagram} \node{V({T_1})} \arrow[2]{e,t}{f} \arrow{s,l}{l_1} \node[2]{V({T_2})} \arrow{s,r}{l_2}  \\
 \node{L_1}  \node[2]{L_2}\arrow[2]{w,b}{\Psi} \end{diagram}
 \end{equation}
is commutative.
We say that $T_1(r_1,l_1)$ and $T_2(r_2,l_2)$ are weakly isomorphic if there is a weak isomorphism $f\colon V(T_1)\to V(T_2)$.
\end{definition}

It is clear that every two weakly isomorphic $T_1(r_1,l_1)$ and $T_2(r_2, l_2)$ are isomorphic in the sense of Definition~\ref{d3.1}. Moreover, if $L_1=L_2$ and $T_1(r_1,l_1)$ and $T_2(r_2, l_2)$ are isomorphic in the sense of Definition~\ref{d3.2}, then $T_1(r_1,l_1)$ and $T_2(r_2, l_2)$  are weakly isomorphic with $\Psi(x)=x$ for every $x\in L_2$.

\begin{theorem}\label{t4.3*}
Let $(X_1, d_{1})$, $(X_2, d_{2})$ be nonempty finite ultrametric spaces with the spectra $\Sp{X_1}$, $\Sp{X_2}$ and the representing rooted trees $T_1=T_1(r_1, l_1)$, $T_2=T_2(r_2, l_2)$. The following conditions are equivalent.
\begin{itemize}
  \item [$(i)$] The ultrametric spaces $(X_1, d_{1})$ and $(X_2, d_{2})$ are weakly similar.
  \item [$(ii)$] The labeled rooted trees $T_1(r_1,l_1)$ and $T_2(r_2,l_2)$ are weakly isomorphic.
\end{itemize}
\end{theorem}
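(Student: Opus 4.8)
The plan is to reduce everything to two facts already in hand. By Lemma~\ref{l2} the distance $d_i(x,y)$ equals the maximum of the node-labels along the path joining $\{x\}$ and $\{y\}$ in $T_i$, and by the remark following Lemma~\ref{l2} the labelings are surjective, so that $L_1=\Sp{X_1}$ and $L_2=\Sp{X_2}$ and the label of a ball is its diameter. The only algebraic mechanism used throughout is that a strictly increasing bijection $g$ commutes with finite maxima, i.e. $\max_i g(a_i)=g(\max_i a_i)$. Note also that the monotone bijection $f$ of Definition~\ref{d3.1.1} runs from $\Sp{X_2}$ to $\Sp{X_1}$ (this is what makes~\eqref{ex1} typecheck), matching the direction $L_2\to L_1$ of the map $\Psi$ in Definition~\ref{d3.1.2}; keeping these directions straight is the only piece of real bookkeeping.

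For the implication $(i)\Rightarrow(ii)$ I would start from a weak similarity $\Phi\colon X_1\to X_2$ with the strictly increasing bijection $f$ satisfying~\eqref{ex1}. First I would show $\Phi$ is ball-preserving: every ball is of the form $B_r(c)$ with $r=\operatorname{diam}B\in\Sp{X_1}$, and the equivalence $d_1(c,x)\leqslant r\Leftrightarrow d_2(\Phi(c),\Phi(x))\leqslant f^{-1}(r)$ (valid since $f$ is a strictly increasing bijection) gives $\Phi(B_r(c))=B_{f^{-1}(r)}(\Phi(c))\in\mathbf B_{X_2}$, and symmetrically for $\Phi^{-1}$. Hence by Theorem~\ref{t32} the induced map $\mathbf B_{X_1}\ni B\mapsto\Phi(B)\in\mathbf B_{X_2}$ is a rooted-tree isomorphism $F\colon V(T_1)\to V(T_2)$ under the identifications $V(T_i)=\mathbf B_{X_i}$ of Proposition~\ref{lbpm}. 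It then remains only to check the labels, which follows because $f$ commutes with maxima:
\[
l_1(B)=\max_{x,y\in B} d_1(x,y)=\max_{x,y\in B}f\bigl(d_2(\Phi(x),\Phi(y))\bigr)=f\Bigl(\max_{u,v\in \Phi(B)} d_2(u,v)\Bigr)=f\bigl(l_2(F(B))\bigr).
\]
Thus $F$ makes diagram~\eqref{ex2} commute with $\Psi=f$, so $F$ is a weak isomorphism.

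For $(ii)\Rightarrow(i)$ I would reverse the leaf construction used in Theorem~\ref{l3.3}. A weak isomorphism $F\colon V(T_1)\to V(T_2)$ is in particular a rooted-tree isomorphism, hence carries leaves to leaves; restricting it to $\overline{L}_{T_1}$ identifies the one-point balls and yields a bijection $\Phi\colon X_1\to X_2$ with $F(\{x\})=\{\Phi(x)\}$. Since $F$ preserves adjacency, it maps the path joining $\{x\}$ and $\{y\}$ in $T_1$ onto the path joining $\{\Phi(x)\}$ and $\{\Phi(y)\}$ in $T_2$, sending the interior vertices $v_1,\dots,v_n$ of the first path to the interior vertices $F(v_1),\dots,F(v_n)$ of the second. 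Applying Lemma~\ref{l2} in both trees, the relation $l_1=\Psi\circ l_2\circ F$ from~\eqref{ex2}, and the commutation of the strictly increasing $\Psi$ with maxima, I obtain
\[
d_1(x,y)=\max_{1\leqslant i\leqslant n} l_1(v_i)=\Psi\Bigl(\max_{1\leqslant i\leqslant n} l_2(F(v_i))\Bigr)=\Psi\bigl(d_2(\Phi(x),\Phi(y))\bigr),
\]
which is exactly~\eqref{ex1} with $f=\Psi$, exhibiting $\Phi$ as a weak similarity.

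Both directions are therefore short once Theorem~\ref{t32}, Theorem~\ref{l3.3} and Lemma~\ref{l2} are available. I do not expect a genuine obstacle: the only points requiring care are tracking that the monotone bijections $f$ and $\Psi$ run in opposite directions between the two spectra, and verifying that a rooted-tree isomorphism really sends the \emph{interior} of a leaf-to-leaf path to the interior of the image path, so that Lemma~\ref{l2} may be invoked on the $T_2$ side with the correct index set.
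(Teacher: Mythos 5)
Your proof is correct and follows essentially the same route as the paper: the direction $(ii)\Rightarrow(i)$ is the paper's argument (Lemma~\ref{l2} plus the fact that a strictly increasing bijection commutes with finite maxima, applied via the leaf--point correspondence), and your direction $(i)\Rightarrow(ii)$ is just an explicit filling-in, via ball-preservation and Theorem~\ref{t32}, of the step the paper dismisses as ``easy to see.'' Your observation that the monotone bijection in Definition~\ref{d3.1.1} must run from $\Sp{X_2}$ to $\Sp{X_1}$ for~\eqref{ex1} to typecheck is accurate and consistent with how the paper uses it.
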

\begin{proof}
It is easy to see that the implication $(i)\Rightarrow(ii)$ is satisfied.
Suppose that there exist an isomorphism $f\colon V(T_1)\to V(T_2)$ of $T_1$ and $T_2$ and a strictly increasing bijection $\Psi \colon \Sp{X_2}\to \Sp{X_1}$ such that diagram~(\ref{ex2}) is commutative with $L_1 = \Sp{X_1}$ and $L_2=\Sp{X_2}$.
 To prove that $(ii)\Rightarrow(i)$ holds it suffices to show that the function $\hat{f}\colon X_1\to X_2$ defined by
$$
(\hat{f}(t)=z) \Leftrightarrow
(f(\{t\})= \{z\}),
$$
 is a weak similarity such that
\begin{equation}\label{e4.4*}
d_1(x,y)=\Psi(d_2(\hat{f}(x), \hat{f}(y)))
\end{equation}
holds for all $x,y \in X_1$.

For $x_1, y_1 \in X_1$, write $r_1=d_1(x_1,y_1)$. Let $B_1=\{x\in X_1\colon d_1(x,x_1)\leqslant r_1\}$. Then $x_1, y_1\in B_1\in V(T_1)$ and
\begin{equation}\label{e45*}
r_1=d_1(x_1,y_1)=\diam B_1=l_1(B_1).
\end{equation}
The ball $f(B_1)$ belongs to $V(T_2)$ and, using Lemma~\ref{l2}, we obtain that
\begin{equation}\label{e46*}
d_2(\hat{f}(x_1),\hat{f}(y_1))=\diam f(B_1) = l_2(f(B_1)).
\end{equation}

Since diagram~(\ref{ex2}) is commutative, the equality
$$
l_1(B_1)= \Psi(l_2(f(B_1)))
$$
follows. The last equality, ~(\ref{e45*}) and~(\ref{e46*}) imply~(\ref{e4.4*}). The function $\Psi\colon \Sp{X_1}\to \Sp{X_2}$ is a strictly increasing bijection. Thus $\hat{f}$  is a weak similarity of $(X_1, d_1)$ and $(X_2, d_2)$. The proof is completed.
\end{proof}

If $\Sp{X_1}=\Sp{X_2}$ holds for finite ultrametric spaces $(X_1,d_1)$ and $(X_2, d_2)$ which are weakly similar, then it is easy to prove that $(X_1, d_1)$ and $(X_2, d_2)$ are isomorphic. (See, for example, ~\cite{DP}, Corollary 3.7.) Hence Theorem~\ref{t4.3*} and Lemma~\ref{l3.3} imply the following corollary.

\begin{corollary}\label{c4.3}
Let $(X_1, d_1)$ and $(X_2, d_2)$ be nonempty finite ultrametric spaces with $$
\Sp{X_1}=\Sp{X_2}
$$
and let $T_1=T_1(r_1, l_1)$ and $T_2=T_2(r_2, l_2)$ be the labeled representing trees of $(X_1, d_1)$ and $(X_2, d_2)$ with labelings $l_1\colon V(T_1)\to \Sp{X_1}$ and $l_2\colon V(T_2)\to \Sp{X_2}$ defined as in~(\ref{e2.7}). Then $T_1(r_1, l_1)$ and $T_2(r_2,l_2)$ are weakly isomorphic in the sense of Definition~\ref{d3.1.2} if and only if $T_1(r_1,l_1)$ and $T_2(r_2,l_2)$ are isomorphic in the sense of Definition~\ref{d3.2}.
\end{corollary}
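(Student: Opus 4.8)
The plan is to prove the two implications separately, leaning on the equivalences already established at the level of ultrametric spaces. For the implication ``isomorphic in the sense of Definition~\ref{d3.2} $\Rightarrow$ weakly isomorphic'', I would argue that it is essentially immediate. Since here $L_1 = \Sp{X_1} = \Sp{X_2} = L_2$, I can take $\Psi$ to be the identity map on this common spectrum. The defining equality $l_2(f(v)) = l_1(v)$ of Definition~\ref{d3.2} then says precisely that diagram~\eqref{ex2} commutes with this choice of $\Psi$, so the same bijection $f$ is a weak isomorphism in the sense of Definition~\ref{d3.1.2}. This is exactly the observation recorded in the paragraph preceding Theorem~\ref{t4.3*}.

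For the converse, I would chain three facts. First, by Theorem~\ref{t4.3*}, the labeled representing trees $T_1$ and $T_2$ are weakly isomorphic if and only if the spaces $(X_1,d_1)$ and $(X_2,d_2)$ are weakly similar; thus a weak isomorphism of trees yields a weak similarity of spaces. Second, because $\Sp{X_1} = \Sp{X_2}$, I invoke the cited fact (\cite{DP}, Corollary~3.7) that a weak similarity between finite ultrametric spaces with equal spectra is forced to be an isometry; hence $(X_1,d_1)$ and $(X_2,d_2)$ are isometric. Third, by Lemma~\ref{l3.3}, isometry of the spaces is equivalent to isomorphism of the labeled representing trees $T_1$ and $T_2$ in the sense of Definition~\ref{d3.2}. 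Composing these equivalences delivers the desired isomorphism.

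The step requiring the most care is the use of the equal-spectra hypothesis, and I expect this to be the only genuinely nontrivial point. The key mechanism is that a strictly increasing bijection of a finite totally ordered subset of $\mathbb{R}^+$ onto itself must be the identity; this is what upgrades ``weak similarity'' to ``isometry'', and equivalently what forces the rescaling $\Psi$ in Definition~\ref{d3.1.2} to be the identity so that the labels are preserved exactly. In fact this remark gives a shortcut that bypasses the detour through the spaces: given a weak isomorphism with rescaling $\Psi\colon \Sp{X_2}\to\Sp{X_1}$, the equality $\Sp{X_1}=\Sp{X_2}$ forces $\Psi=\mathrm{id}$, whereupon the commutativity of~\eqref{ex2} reduces to $l_2(f(v))=l_1(v)$, which is precisely Definition~\ref{d3.2}. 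Either route works, and in both the hypothesis $\Sp{X_1}=\Sp{X_2}$ is exactly what makes the two notions coincide.
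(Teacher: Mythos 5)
Your main argument is correct and is essentially the paper's own proof: the forward direction is the remark preceding Theorem~\ref{t4.3*} (take $\Psi=\mathrm{id}$ on the common spectrum), and the converse chains Theorem~\ref{t4.3*}, the cited fact that a weak similarity between spaces with equal spectra is an isometry (\cite{DP}, Corollary~3.7), and Theorem~\ref{l3.3}. Your closing ``shortcut'' is also valid and is in fact cleaner than the published route: since the labelings $l_i\colon V(T_i)\to\Sp{X_i}$ are surjective and $\Sp{X_1}=\Sp{X_2}$ is a finite subset of $\mathbb{R}^+$, the rescaling $\Psi$ in Definition~\ref{d3.1.2} is a strictly increasing bijection of a finite totally ordered set onto itself, hence the identity, and commutativity of~\eqref{ex2} collapses to $l_2(f(v))=l_1(v)$, i.e.\ Definition~\ref{d3.2}. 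This avoids the detour through weak similarities and isometries of the underlying spaces altogether and uses no external reference; the paper's route has the mild advantage of exhibiting the corollary as a formal consequence of the space-level statements it has already set up, but your direct argument is self-contained and equally rigorous.
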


The concept of weak similarity of metric spaces is closely connected to such concepts as ordinal scaling, non-metric multidimensional scaling, ranking or isotonic embedding which were used in the psychometric studies and the machine learning~\cite{AWCLKB, BG, JN,K64a, K64b, LGCL, QY, RF, S62b, S66, TLBSK, WJJ}. The close concept of generalized ultrametric spaces was introduced by S. Priess-Crampe and P. Ribenboim in~\cite{PC, PCR}. These spaces were studied in ~\cite{PCR1,PCR2,R96,R09} and lead to interesting applications in logic programming, computational logic and domain theory~\cite{SH,K,PCR1}.

M. Kleindesner and U. von Luxburg write~\cite{KL}: "Even though ordinal embeddings dates back to the 1960ies and is widely used in practice, surrisingly little is known about its theoretical properties." In this connection, it seems that the future studies of the interrelations between metric spaces, generalized ultrametric spaces and graphs are actual and can lead to adequate mathematical models for important problems of machine learning, data analysis and psychometrics.

\section{Embeddings of rooted trees in balleans of ultrametric spaces}

Recall that we write $T_1\simeq T_2$ if $T_1= T_1(r_1)$ and $T_2=T_2(r_2)$ are isomorphic rooted trees and, for finite ultrametric space $(X,d)$,  denote by $\widetilde{T}_X$ the rooted subtree of the representing tree $T_X$ induced by the set $I(T_X)$ of all internal nodes of $T_X$.

\begin{lemma}\label{l3.4}
	Let $T=T(r,l_T)$ be a finite labeled rooted tree with the root $r$ and the surjective labeling $l_T\colon V(T)\to L$, with $L\subseteq \mathbb R^+$ and $0\in L$. Then the following two conditions are equivalent.
	\begin{enumerate}
		\item [$(i)$] For every $u \in V(T)$ we have $\delta^+(u)\neq 1$ and
		$$
		(\delta^+(u) =0) \Leftrightarrow (l_T(u)=0)
		$$
		and, in addition, the inequality
		\begin{equation}\label{l3.4e1}
		l_T(v) < l_T(u)
		\end{equation}
		holds whenever $v$ is a direct successor of $u$.
		\item [$(ii)$] There is a finite ultrametric space $(X,d)$ such that the equality $\Sp{X}=L$ holds and $T_X$, $T$ are isomorphic in the sense of Definition~\ref{d3.2}.
	\end{enumerate}
\end{lemma}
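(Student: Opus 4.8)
The plan is to treat the two implications separately, with $(ii)\Rightarrow(i)$ being the routine one. For it, I would note that an isomorphism in the sense of Definition~\ref{d3.2} preserves adjacency, the root, and (since $L_1=L_2$) the labels, so it suffices to verify that every representing tree $T_X$ with $\Sp{X}=L$ satisfies the three properties listed in $(i)$; then $T\simeq T_X$ transports them to $T$. By Proposition~\ref{lbpm} the internal nodes of $T_X$ are exactly the nonsingular balls, and Theorem~\ref{t13} shows each such ball $B$ splits into $k\geqslant 2$ parts, so $\delta^+(B)=k\geqslant 2$; hence $\delta^+(u)\neq 1$ for every node, and $\delta^+(u)=0$ holds precisely for the singular balls, which are exactly the nodes of label $0$. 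Finally a child of $B$ is a proper sub-ball all of whose internal distances are strictly below $\diam B$, so its label (its diameter) is strictly smaller than $l(B)$, giving~\eqref{l3.4e1}.

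For $(i)\Rightarrow(ii)$ the plan is to build the space directly from the tree. I would set $X=\overline{L}_T$, the leaf set of $T$, and for distinct leaves $x,y$ put $d(x,y)=l_T(x\wedge y)$, where $x\wedge y$ is their lowest common ancestor; by~\eqref{l3.4e1} this coincides with $\max_i l_T(v_i)$ along the joining path, matching Lemma~\ref{l2}. First I would check that $d$ is an ultrametric: symmetry is clear; if $x\neq y$ then $x\wedge y$ is internal, hence carries a positive label by $(\delta^+(u)=0)\Leftrightarrow(l_T(u)=0)$, so $d(x,y)=0\Leftrightarrow x=y$; and the strong triangle inequality reduces to the standard tree fact that among $x\wedge y$, $y\wedge z$, $x\wedge z$ two coincide and are a weak ancestor of the third, whence the two largest of $d(x,y),d(y,z),d(x,z)$ are equal. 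Then $\Sp{X}=L$: each distance is a value of $l_T$, so $\Sp{X}\subseteq L$, while for $t\in L$, $t>0$, surjectivity of $l_T$ yields an internal node $v$ with $l_T(v)=t$; picking leaves in two distinct child-subtrees of $v$ gives a pair at distance $t$, and $t=0$ is realized by $d(x,x)$.

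It remains to produce the labeled isomorphism, which is the crux. I would define $\beta\colon V(T)\to\B_X$ by $\beta(v)=L_{T_v}$, the set of leaves descended from $v$. The central computation is that $\beta(v)=\{z\in X\colon d(z,x)\leqslant l_T(v)\}$ for any $x\in L_{T_v}$: a leaf lies in the subtree $T_v$ exactly when its lowest common ancestor with $x$ is $v$ or a descendant of $v$, i.e.\ has label $\leqslant l_T(v)$, since~\eqref{l3.4e1} forces any ancestor strictly above $v$ to exceed $l_T(v)$. The same reading of the ancestor chain of a center shows every ball $B_\rho(c)$ equals $L_{T_a}=\beta(a)$ for the highest ancestor $a$ of $c$ with $l_T(a)\leqslant\rho$; thus $\beta$ is surjective onto $\B_X=V(T_X)$, and it is injective because $\delta^+\neq 1$ forces distinct nodes to have distinct leaf-sets. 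Finally I would verify $\diam\beta(v)=l_T(v)$ (realized, when $v$ is internal, by leaves in two different child-subtrees), which is the label condition~\eqref{d3.2e1}, and that the children of $\beta(v)$ in $T_X$ are exactly the $\beta(c)$ for $c$ a child of $v$, because two leaves of $L_{T_v}$ lie at distance $<\diam\beta(v)$ iff they share a child-subtree, which is precisely the multipartite splitting used to build $T_X$. Together with $\beta(r)=X$ this gives adjacency preservation~\eqref{d3.1e1}, so $\beta$ is the required isomorphism.

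The main obstacle is this last matching of hierarchies: showing that $\beta$ sends the parent–child relation of $T$ to that of $T_X$. All three hypotheses are spent here: \eqref{l3.4e1} makes subtree leaf-sets coincide with metric balls and enforces strict diameter decrease, $\delta^+(u)\neq 1$ prevents two nodes from sharing a leaf-set (which would collapse under $\beta$ and clash with the unique diameter of a ball), and $(\delta^+(u)=0)\Leftrightarrow(l_T(u)=0)$ ties leaves to zero distances and nonleaves to positive diameters. The degenerate case $L=\{0\}$ forces $T$ to be a single node, where the statement is immediate.
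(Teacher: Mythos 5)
Your proposal is correct and takes essentially the same route as the paper: the paper also sets $X=\overline{L}_T$ and defines $d(x,y)=\min_{w\in P_{x,y}}l_T(w)$ over all common ancestors $w$ of $x$ and $y$, which by the monotonicity condition~\eqref{l3.4e1} is exactly your $l_T(x\wedge y)$, and it likewise disposes of $(ii)\Rightarrow(i)$ by observing that $T_X$ always satisfies $(i)$ and that $(i)$ is preserved under labeled isomorphism. The paper leaves the ultrametric verification and the construction of the isomorphism as brief assertions, so your detailed checks (the isoceles-triangle argument via lowest common ancestors, the identification of $L_{T_v}$ with metric balls, and the matching of the parent--child hierarchies) simply fill in what the paper omits.
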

\begin{proof}
	$(i) \Rightarrow (ii)$. Let us denote by $X$ the set of the leaves of $T$. For every pair $x$, $y \in X$ denote by $P_{x,y}$ the subset of $V(T)$ consisting of all nodes $w$ for which $x$ and $y$ are successors of $w$ and write
	\begin{equation}\label{l3.4e2}
	d(x,y) = \min_{w \in P_{x,y}} l_T(w).
	\end{equation}
	Using condition $(i)$ we can prove that the function $d$ is an ultrametric on $X$. Now~\eqref{l3.4e2}, the definition of the representing trees and Lemma~\ref{l2} imply that $T_X$ and $T(r,l_T)$ are isomorphic in the sense of Definition~\ref{d3.2}.
	
	$(ii) \Rightarrow (i)$. If $(X,d)$ is a finite ultrametric space, then condition~$(i)$ evidently holds for $T=T_X$. Moreover, if we have two labeled rooted trees which are isomorphic in the sense of Definition~\ref{d3.2} and one of them satisfies condition~$(i)$, then another tree also satisfies~$(i)$. The implication $(ii) \Rightarrow (i)$ holds.
\end{proof}

Lemma~\ref{l3.4} implies, in particular, the following corollary.

\begin{corollary}\label{c3.5}
	Let $T=T(r)$ be a finite rooted tree. Then the following conditions are equivalent.
	\begin{enumerate}
		\item[$(i)$] For every $u \in V(T)$ we have $\delta^+(u)\neq 1$.
		\item[$(ii)$] There is a finite ultrametric space $(X,d)$ such that $T_X$ and $T$ are isomorphic in the sense of Definition~\ref{d3.1}.
	\end{enumerate}
\end{corollary}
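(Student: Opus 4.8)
The plan is to deduce the corollary entirely from Lemma~\ref{l3.4}. The gap to be bridged is twofold: the corollary speaks of \emph{unlabeled} rooted trees and isomorphism in the sense of Definition~\ref{d3.1}, whereas Lemma~\ref{l3.4} concerns \emph{labeled} trees and Definition~\ref{d3.2}; and the corollary's condition $(i)$ retains only the combinatorial requirement $\delta^+(u)\neq 1$, discarding the two label-dependent clauses of Lemma~\ref{l3.4}$(i)$. The whole point, therefore, is to show that the bare condition $\delta^+(u)\neq 1$ suffices to equip $T$ with a labeling meeting the full hypothesis of the lemma, and conversely that the label clauses are automatic for representing trees and hence irrelevant once one forgets labels.

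For $(i)\Rightarrow(ii)$ I would first manufacture a labeling. Define $l_T\colon V(T)\to\mathbb R^+$ by letting $l_T(u)$ be the length of a longest path descending from $u$ to a leaf; thus leaves receive the label $0$ and $l_T(u)=1+\max_v l_T(v)$, the maximum taken over the direct successors $v$ of $u$. I would then check the three clauses of Lemma~\ref{l3.4}$(i)$: the requirement $\delta^+(u)\neq 1$ is exactly hypothesis $(i)$ of the corollary; the equivalence $(\delta^+(u)=0)\Leftrightarrow(l_T(u)=0)$ holds because a node is a leaf iff its longest descending path has length $0$, while an internal node, having $\delta^+(u)\geq 2$, gets $l_T(u)\geq 1>0$; and the strict inequality~\eqref{l3.4e1} is immediate from $l_T(u)=1+\max_v l_T(v)>l_T(v)$. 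The image $L=l_T(V(T))$ is a finite subset of $\mathbb R^+$ containing $0$ (here one uses that a nonempty finite tree always has a leaf), and $l_T$ is surjective onto $L$ by definition. Lemma~\ref{l3.4} then yields a finite ultrametric space $(X,d)$ with $T_X$ and $T(r,l_T)$ isomorphic in the sense of Definition~\ref{d3.2}; forgetting the labels, the same bijection is an isomorphism in the sense of Definition~\ref{d3.1}, which is $(ii)$.

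For the converse $(ii)\Rightarrow(i)$, suppose $T_X\simeq T$ as rooted trees for some finite ultrametric space $(X,d)$. Applying the implication $(ii)\Rightarrow(i)$ of Lemma~\ref{l3.4} to $T_X$ with its canonical labeling~\eqref{e2.7} (equivalently, invoking Theorem~\ref{t13} directly), every node of $T_X$ satisfies $\delta^+\neq 1$. It then remains to note that an isomorphism of rooted trees preserves out-degrees: by~\eqref{d3.1e1} it is an adjacency-preserving bijection carrying root to root, hence it preserves the parent--child relation and therefore the number of children of each node. Consequently $T$ inherits $\delta^+(u)\neq 1$ for every $u\in V(T)$, which is $(i)$.

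I expect no genuine obstacle here; the derivation is routine once the height labeling is chosen. The only points demanding a little care are the verification that out-degree is an invariant of unlabeled rooted-tree isomorphism and the degenerate case $|V(T)|=1$, where $T$ is a single leaf-root and the realizing space is a one-point ultrametric space.
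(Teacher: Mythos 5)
Your proposal is correct and follows exactly the route the paper intends: the paper gives no written proof of Corollary~\ref{c3.5}, merely asserting that it follows from Lemma~\ref{l3.4}, and your height labeling $l_T(u)=1+\max_v l_T(v)$ is the natural way to satisfy the lemma's hypotheses, with the converse handled by the fact that representing trees have $\delta^+\neq 1$ at every node and rooted-tree isomorphisms preserve out-degree. No gaps.
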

\begin{remark}
The rooted trees described by Corollary~\ref{c3.5} have the following interpretation in evolutionary biology. The finite rooted tree $T$ with $|V(T)|\geqslant 2$ satisfies condition $(i)$ of Corollary~\ref{c3.5} if and only if $T$ is a rooted phylogenetic tree (see Definition 2.2.2 in~\cite{SS}).
\end{remark}

Using Corollary~\ref{c3.5} and Proposition~\ref{lbpm} we can easily show that, for every finite rooted tree $T(r)$, there is a finite ultrametric space $(X,d)$ such that $T(r)\simeq \widetilde{T}_X$. What is the minimal cardinality of finite ultrametric spaces $(X,d)$ satisfying $\widetilde{T}_X\simeq T(r)$ for given $T(r)$?

\begin{lemma}\label{l3.5*}
Let $T=T(r)$ be a rooted tree. For every $v\in V(T)$ denote by $\operatorname{DS}(v)$ the set of directed successors of $v$ which are leaves of $T$. Then the sets $\operatorname{DS}(v)$ and $\operatorname{DS}(u)$ are disjoint if $u$ and $v$ are distinct.
\end{lemma}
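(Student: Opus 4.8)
The plan is to reduce the statement to the elementary structural fact that in a rooted tree each node distinct from the root possesses a unique direct predecessor (its parent), together with the observation that a direct successor of $v$ is precisely a node whose parent is $v$. Once this is in place, the disjointness is immediate, since a common element of $\operatorname{DS}(u)$ and $\operatorname{DS}(v)$ would be a node with two distinct parents.

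Concretely, I would argue by contradiction. Suppose $u, v \in V(T)$ are distinct yet $\operatorname{DS}(u)\cap\operatorname{DS}(v)\neq\varnothing$, and fix a node $\ell$ in this intersection. By the definition of $\operatorname{DS}$, the node $\ell$ is a (leaf) direct successor of $u$ and also a direct successor of $v$; in particular $\ell$ is not the root $r$, and both $u$ and $v$ are parents of $\ell$. It then remains to invoke uniqueness of the parent: since $T$ is connected and acyclic, there is a unique path from $r$ to $\ell$, so $\ell$ has exactly one neighbour lying on this path, namely its unique parent. Equivalently, if $\ell$ had two distinct parents $u$ and $v$, then concatenating the $r$-to-$u$ and $r$-to-$v$ paths through $\ell$ would produce a cycle in $T$, contradicting that $T$ is a tree. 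Hence $u = v$, contradicting $u\neq v$, and therefore $\operatorname{DS}(u)\cap\operatorname{DS}(v)=\varnothing$.

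I expect no genuine obstacle in this argument: the entire content is the uniqueness of the parent in a rooted tree, which is a direct consequence of connectedness and acyclicity. I would note that the hypothesis that the elements of $\operatorname{DS}(v)$ are \emph{leaves} plays no role in establishing the disjointness itself — the same reasoning shows that the full sets of direct successors of distinct nodes are disjoint — and is retained only to match the form of the statement, which is tailored to the later counting of leaves.
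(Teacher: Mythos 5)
Your proof is correct and takes essentially the same route as the paper's: both reduce the claim to the uniqueness of the parent of a node in a rooted tree, deduced from acyclicity by observing that a common direct successor of the distinct nodes $u$ and $v$ would close a cycle through the $u$--$v$ path. Your side remark that the leaf hypothesis plays no role in the disjointness itself is also accurate.
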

\begin{proof}
Let $u,v \in V(T)$ and $u\neq v$. Then there are a path $P_u$ joining $u$ and $r$ and a path $P_v$ joining $v$ and $r$. It is clear that
$$
V(P_u)\cap \operatorname{DS}(u)=V(P_v)\cap \operatorname{DS}(v)=\varnothing.
$$
Let $P\subseteq P_u\cup P_v$ be the path joining $u$ and $v$, $P=(x_1,...,x_n)$, $x_1=u$ and $x_n=v$. Suppose that $\operatorname{DS}(u)\cap \operatorname{DS}(v)\neq \varnothing$. If
$$
x_0 \in \operatorname{DS}(u)\cap \operatorname{DS}(v),
$$
then $(x_0,x_1,...,x_n)$ is a cycle in $T$, contrary to the definition of the trees.
\end{proof}

\begin{theorem}\label{t36}
Let $T=T(r)$ be a finite rooted tree. Then there is a finite ultrametric space $(X,d)$ such that $T \simeq \widetilde{T}_X$ and
\begin{equation}\label{e3.8}
|X|=\sum\limits_{v\in V(T)}(2-\delta^+(v))_{+},
\end{equation}

where
\begin{equation}\label{e3.9}
(2-\delta^+(v))_{+}=
\begin{cases}
2-\delta^+(v), &\text{ if } \ \, 2\geqslant \delta^+ (v), \\
0, &\text{ if } \ \, 2< \delta^+ (v).
\end{cases}
\end{equation}
Moreover, if $(Y,\rho)$ is a finite ultrametric space such that $T\simeq\widetilde{T}_Y$, then the inequality \begin{equation}\label{e3.10}
|Y|\geqslant\sum\limits_{v\in V(T)}(2-\delta^+(v))_{+},
\end{equation}
holds.
\end{theorem}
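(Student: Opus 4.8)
The plan is to prove the two assertions separately: first I would construct an extremal space attaining equality in~\eqref{e3.8} by hand, and then establish the lower bound~\eqref{e3.10} by a degree-counting argument.

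For the existence of a space realizing~\eqref{e3.8}, I would enlarge $T$ into a legitimate representing tree by attaching new leaves. Concretely, form a rooted tree $T'$ by adjoining to each node $v\in V(T)$ exactly $(2-\delta^+(v))_{+}$ fresh pendant vertices as additional children, keeping the root of $T'$ equal to the root $r$ of $T$. Then every node of $T'$ has out-degree either $0$ (the new pendant vertices) or at least $2$, since each original node $v$ acquires out-degree $\delta^+(v)+(2-\delta^+(v))_{+}\geqslant 2$; in particular $T'$ has no node of out-degree $1$. By Corollary~\ref{c3.5} there is a finite ultrametric space $(X,d)$ with $T_X\simeq T'$. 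Since the added vertices are precisely the leaves of $T'$, the set $I(T')$ of internal nodes coincides with $V(T)$ and the subtree of $T'$ induced by $I(T')$ is $T$ itself; as an isomorphism of rooted trees carries internal nodes to internal nodes, $T_X\simeq T'$ yields $\widetilde{T}_X\simeq T$. Finally, by Proposition~\ref{lbpm} the cardinality $|X|$ equals the number of leaves of $T_X$, which equals the number of adjoined pendant vertices, namely $\sum_{v\in V(T)}(2-\delta^+(v))_{+}$.

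For the lower bound, let $(Y,\rho)$ be any finite ultrametric space with $\widetilde{T}_Y\simeq T$, and fix an isomorphism $\phi\colon V(T)\to I(T_Y)$ of rooted trees. Since $T$ is nonempty, $I(T_Y)\neq\varnothing$, so $|Y|\geqslant 2$ and the root of $T_Y$ is internal. The key point I would establish is that, for every $v\in V(T)$ with image $w=\phi(v)$, the out-degree of $w$ in $T_Y$ splits as
\begin{equation*}
\delta^+_{T_Y}(w)=\delta^+_{T}(v)+|\operatorname{DS}(w)|,
\end{equation*}
where $\operatorname{DS}(w)$ is the set of leaf-children of $w$ as in Lemma~\ref{l3.5*}. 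This holds because the parent of any internal node is again internal, so passing from $T_Y$ to $\widetilde{T}_Y$ deletes only leaf-children and preserves every internal parent-child relation; hence the children of $w$ in $\widetilde{T}_Y$ are exactly its internal children in $T_Y$, and these correspond under $\phi^{-1}$ to the $\delta^+_T(v)$ children of $v$ in $T$. As $T_Y$ is a representing tree, each internal node has out-degree different from $1$ and positive, whence $\delta^+_{T_Y}(w)\geqslant 2$, and the displayed identity forces $|\operatorname{DS}(w)|\geqslant (2-\delta^+_T(v))_{+}$.

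To finish, I would sum these local estimates over all internal nodes. Every leaf of $T_Y$ is a leaf-child of its (internal) parent, so the sets $\operatorname{DS}(w)$, $w\in I(T_Y)$, partition the leaf set of $T_Y$, the required disjointness being precisely Lemma~\ref{l3.5*}. Using Proposition~\ref{lbpm} to identify $|Y|$ with the number of leaves of $T_Y$, I obtain
\begin{equation*}
|Y|=\sum_{w\in I(T_Y)}|\operatorname{DS}(w)|\geqslant\sum_{v\in V(T)}(2-\delta^+(v))_{+},
\end{equation*}
which is~\eqref{e3.10}. The only real obstacle lies in the bookkeeping of this second part: one must verify carefully that restricting to the induced subtree $\widetilde{T}_Y$ removes exactly the leaf-children and nothing else, so that $\delta^+_T(v)$ genuinely counts the internal children of $w=\phi(v)$; once this is pinned down, both the degree inequality and the leaf partition follow at once.
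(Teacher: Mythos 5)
Your proposal is correct and follows essentially the same route as the paper: the construction of $T'$ by attaching $(2-\delta^+(v))_{+}$ pendant children to each node $v$ is exactly the paper's tree $G(r)$ (two new leaves under each leaf of $T$, one under each out-degree-one node), and the lower bound is obtained, as in the paper, by counting leaf-children of internal nodes of $T_Y$ and invoking the disjointness statement of Lemma~\ref{l3.5*}. The only cosmetic difference is that you record the exact identity $\delta^+_{T_Y}(w)=\delta^+_T(v)+|\operatorname{DS}(w)|$ where the paper only uses the resulting inequalities.
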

\begin{proof}
Let $\overline{L}_T$ be the set of leaves of $T$ and let
$$
V^1(T)=\{v \in V(T)\colon \delta^+ (v) =1\}.
$$
Let us consider the sets $W_0^1$, $W_0^2$ and $W_1$ such that $V(T)$, $W_0^1$, $W_0^2$ and $W_1$ are disjoint and $|W_0^1|=|W_0^2|=|\overline{L}_T|$ and $|W_1|=|V^1(T)|$. Let $f^1_0\colon \overline{L}_T\to W_0^1$, $f^2_0\colon \overline{L}_T\to W_0^2$ and $f_1\colon V_1(T)\to W_1$ be bijections. We define a graph $G$ as
$$
V(G)=V(T) \cup W_0\cup W_1^1 \cup W_1^2
$$
and $u,v \in V(G)$ are adjacent in $G$ if and only if $\{u,v\} \in E(T)$ or
$$
\{u,v\}=\{u, f^1_0(u)\} \ \,
\text{ and } \ \, u\in L_T,
$$
or
$$
\{u,v\}= \{u, f^2_0(u)\} \ \, \text{ and } \, \ u \in L_T,
$$
or
$$
\{u,v\}= \{u, f_1(u)\} \ \, \text{ and } \, \ u \in V^1(T).
$$
It is easy to prove that $G$ is a tree (see Figure~\ref{fig2*}). Suppose that the root $r$ of $T(r)$ is also the root of $G$. The set of leaves of $G(r)$ is $W_0^1\cup W_0^2 \cup W_1$, i.e.,
\begin{equation}\label{e3.11}
\overline{L}_G=W_0^1\cup W_0^2\cup W_1.
\end{equation}
For every vertex of the rooted tree $G(r)$ we have the inequality $\delta^+(x)\geqslant 2$. Hence by Lemma~\ref{l3.4} there is  a finite ultrametric space $(X,d)$ such that $G(r)\simeq T_X$ and $|X|=|\overline{L}_G|$. To finish the proof of the first statement of the theorem it suffices to note that the last equality,~(\ref{e3.11}) and~(\ref{e3.9}) imply~(\ref{e3.8}).

Suppose now that $(Y,\rho)$ is a finite ultrametric space such that $T(r)\simeq \widetilde{T}_Y$.
By definition of $\widetilde{T}_Y$ the equality $I(T_Y)=V(\widetilde{T}_Y)$ holds.  Consequently if there is $v\in V(\widetilde{T}_Y)$ such that $\delta^+(v)=1$ in $\widetilde{T}_Y$, then using Lemma~\ref{l3.4} we can find at least one direct successor $u$ of $v$ (in $T_Y$) such that $u\in \overline{L}_{T_Y}$. Similarly if $\delta^+(v)=0$ in $\widetilde{T}_Y$, then there exist at least two leaves $u_1$ and $u_2$ in $T_Y$ such that $u_1$ and $u_2$ are some distinct direct successors of $v$ in $T_Y$. Using Lemma~\ref{l3.5*} we obtain
 $$
 |\overline{L}(T_Y)|\geqslant \sum\limits_{v\in V(\widetilde{T}_Y)}(2-\delta^+(v))_+.
 $$
Since $\widetilde{T}_Y\simeq T$, the equality
$$
\sum\limits_{v\in V(\widetilde{T}_Y)}(2-\delta^+(v))_+ = \sum\limits_{v\in V(T)}(2-\delta^+(v))_+
$$
holds. Inequality~(\ref{e3.10}) follows.
\end{proof}
\begin{figure}
	\begin{tikzpicture}[scale=0.7]
\draw %(3,10) --
(3,7) -- (3,4) -- (5,2);
\draw (3,4) -- (1,2);
	\node [left=1cm] at (3,7) {$T(r)$};
%	\fill[black] (3,10) circle(3pt);
	\fill[black] (3,7) circle(3pt);
	\fill[black] (3,4) circle(3pt);
	\fill[black] (5,2) circle(3pt);
	\fill[black] (1,2) circle(3pt);

\draw %(10,10) --
(10,7) -- (10,4) -- (12,2) -- (13,0);
\draw (10,4) -- (8,2);
\draw (8,2) -- (7,0);
\draw (8,2) -- (9,0);
%\draw (10,10) -- (9,8);
\draw (10,7) -- (9,5);
\draw (12,2) -- (11,0);
	%\fill[black] (10,10) circle(3pt);
	\fill[black] (10,7) circle(3pt);
	\fill[black] (10,4) circle(3pt);
	\fill[black] (12,2) circle(3pt);
	\fill[black] (13,0) circle(3pt);

	\fill[black] (8,2) circle(3pt);
	\fill[black] (7,0) circle(3pt);
	\fill[black] (9,0) circle(3pt);
%	\fill[black] (9,8) circle(3pt);
	\fill[black] (9,5) circle(3pt);
    \fill[black] (11,0) circle(3pt);

	\node [right=1cm] at (10,7) {$G(r)$};

	\end{tikzpicture}
	\caption{Each tree is a tree of nonsingular balls of finite ultrametric space.}
	\label{fig2*}
\end{figure}
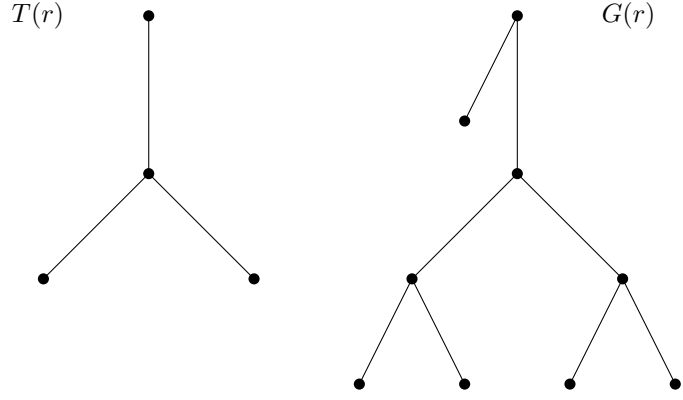

Analyzing the proof of Theorem~\ref{t36} we obtain the following corollaries.
\begin{corollary}\label{c3.8*}
Let $T=T(r)$  be a finite rooted tree and let $(X,d)$ and $(Y,\rho)$ be finite ultrametric spaces for which $T\simeq \widetilde{T}_X\simeq \widetilde{T}_Y$  and
$$
|X|=|Y|=\sum\limits_{v\in V(T)}(2-\delta^+(v))_{+}.
$$
Then $T_X\simeq T_Y$ holds.
\end{corollary}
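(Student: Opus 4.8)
The plan is to show that, under the equality hypothesis, the full representing tree $T_X$ is recoverable as an abstract rooted tree from its tree of internal nodes $\widetilde{T}_X$ alone, and likewise $T_Y$ from $\widetilde{T}_Y$; since $\widetilde{T}_X\simeq T\simeq\widetilde{T}_Y$, this will force $T_X\simeq T_Y$. The starting point is the bookkeeping already present in the proof of Theorem~\ref{t36}. For a finite ultrametric space $(Z,d)$ with $|Z|\geqslant 2$ and every internal node $v$ of $T_Z$, the direct successors of $v$ split into its internal children (those lying in $\widetilde{T}_Z$, numbering $\delta^+_{\widetilde{T}_Z}(v)$) and its leaf children, which form the set $\operatorname{DS}(v)$ of Lemma~\ref{l3.5*}. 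Hence
$$
\delta^+_{T_Z}(v)=\delta^+_{\widetilde{T}_Z}(v)+|\operatorname{DS}(v)|,
$$
and since every leaf of $T_Z$ is a direct successor of a unique internal node, Lemma~\ref{l3.5*} shows that the sets $\operatorname{DS}(v)$ partition $\overline{L}_{T_Z}$, so $|Z|=\sum_{v\in V(\widetilde{T}_Z)}|\operatorname{DS}(v)|$.

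Next I would invoke the local lower bound $|\operatorname{DS}(v)|\geqslant(2-\delta^+_{\widetilde{T}_Z}(v))_+$ established inside the proof of Theorem~\ref{t36}: because $T_Z$ is a representing tree, Lemma~\ref{l3.4} gives $\delta^+_{T_Z}(v)\geqslant 2$ for every internal $v$, so a node with $\delta^+_{\widetilde{T}_Z}(v)=0$ has at least two leaf children and a node with $\delta^+_{\widetilde{T}_Z}(v)=1$ has at least one. Summing reproduces the inequality of Theorem~\ref{t36}. Applying this to $Z=X$ and $Z=Y$ and using $\widetilde{T}_Z\simeq T$ together with the invariance of $\delta^+$ under isomorphism, the hypothesis $|Z|=\sum_{v\in V(T)}(2-\delta^+(v))_+$ makes the summed inequality an equality, which forces each term to be sharp: $|\operatorname{DS}(v)|=(2-\delta^+_{\widetilde{T}_Z}(v))_+$ for every internal node $v$. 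Consequently
$$
\delta^+_{T_Z}(v)=\delta^+_{\widetilde{T}_Z}(v)+\bigl(2-\delta^+_{\widetilde{T}_Z}(v)\bigr)_+=\max\bigl\{\delta^+_{\widetilde{T}_Z}(v),\,2\bigr\}.
$$

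Finally I would construct the isomorphism explicitly. Fix a rooted-tree isomorphism $\varphi\colon V(\widetilde{T}_X)\to V(\widetilde{T}_Y)$, which exists since $\widetilde{T}_X\simeq T\simeq\widetilde{T}_Y$; it preserves out-degree, so $(2-\delta^+(v))_+=(2-\delta^+(\varphi(v)))_+$ for every $v$. By the displayed formula the set of leaf children of $v$ in $T_X$ and the set of leaf children of $\varphi(v)$ in $T_Y$ are equinumerous, so $\varphi$ extends to a bijection $V(T_X)\to V(T_Y)$ by sending the leaf children of $v$ bijectively onto those of $\varphi(v)$ for each internal node $v$. This extension fixes the root and respects adjacency—internal–internal edges being handled by $\varphi$ and internal–leaf edges by the chosen bijections on the leaf-child sets—so it is an isomorphism witnessing $T_X\simeq T_Y$ in the sense of Definition~\ref{d3.1}; the degenerate case $|X|=|Y|=1$ is trivial since then both trees are single nodes. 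The only point demanding care is the tightness argument of the second paragraph, namely that the global cardinality equality forces every local inequality $|\operatorname{DS}(v)|\geqslant(2-\delta^+(v))_+$ to be sharp; once the leaves are correctly partitioned by the sets $\operatorname{DS}(v)$ this is immediate, and the subsequent reconstruction of $T_X$ and $T_Y$ from $\widetilde{T}_X$ and $\widetilde{T}_Y$ is then purely combinatorial.
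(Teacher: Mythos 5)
Your proof is correct and follows essentially the route the paper intends: the paper gives no explicit argument, saying only that the corollary is obtained by ``analyzing the proof of Theorem~\ref{t36}'', and your tightness argument --- forcing $|\operatorname{DS}(v)|=(2-\delta^+(v))_{+}$ at every internal node when the global bound is attained, hence $\delta^+_{T_Z}(v)=\max\{\delta^+_{\widetilde{T}_Z}(v),2\}$ --- is exactly that analysis made explicit, and it matches the construction of $G(r)$ in that proof. The extension of the isomorphism $\varphi$ of $\widetilde{T}_X$ and $\widetilde{T}_Y$ by leaf-child bijections is the intended conclusion, so nothing is missing.
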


\begin{corollary}\label{c4.6*}
Let $T=T(r)$  be a finite rooted tree and let $(Y,\rho)$ be a finite ultrametric space such that $\widetilde{T}_Y\simeq T$.
Then there is a subspace $X$ of $Y$ such that $\widetilde{T}_X\simeq T$ and
$$
|X|=\sum\limits_{v\in V(T)}(2-\delta^+(v))_{+}.
$$
 hold.
\end{corollary}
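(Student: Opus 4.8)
The plan is to realize the minimal space from the proof of Theorem~\ref{t36} as a subspace of the given $Y$, selecting the required leaves from among those already present in $T_Y$ instead of adjoining fresh ones. Let $\psi\colon V(T)\to V(\widetilde{T}_Y)=I(T_Y)$ be an isomorphism witnessing $\widetilde{T}_Y\simeq T$. Since the parent in $T_Y$ of any internal node is again internal (a ball containing a nonsingular ball is nonsingular), the children of a node $w$ in $\widetilde{T}_Y$ are exactly its internal direct successors in $T_Y$; write $k_w=\delta^+(w)$ for this number, computed in $\widetilde{T}_Y$. By Lemma~\ref{l3.4} every internal node of $T_Y$ has at least two direct successors in $T_Y$, so $w$ has at least $(2-k_w)_+$ direct successors that are leaves of $T_Y$. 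First I would choose, for each $w\in I(T_Y)$, a set $S_w$ consisting of exactly $(2-k_w)_+$ leaf-children of $w$, and let $X\subseteq Y$ be the set of those points $y$ for which $\{y\}\in\bigcup_{w}S_w$.

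For the cardinality count, Lemma~\ref{l3.5*} guarantees that the leaf-children of distinct internal nodes are disjoint, so the chosen sets $S_w$ are pairwise disjoint and $|X|=\sum_{w\in I(T_Y)}(2-k_w)_+$. Transporting this sum along $\psi$ and using $k_{\psi(v)}=\delta^+(v)$ gives $|X|=\sum_{v\in V(T)}(2-\delta^+(v))_+$, exactly the value in~(\ref{e3.8}) together with~(\ref{e3.9}).

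It remains to verify that $\widetilde{T}_X\simeq T$, and this is where the main work lies. For each internal node $w$ of $T_Y$ put $B_w=L_{T_w}\cap X$. Since, for any subspace, the balls are precisely the nonempty traces of the balls of the ambient space (in an ultrametric space every point of a ball is a center), each $B_w$ is a ball of $X$; and I would show by induction up $\widetilde{T}_Y$ that $|B_w|\geqslant 2$, a node that is a leaf of $\widetilde{T}_Y$ retaining its two selected leaf-children and a node with internal children inheriting at least two points from them. Thus $w\mapsto B_w$ sends $I(T_Y)$ into the set of nonsingular balls of $X$, which by Proposition~\ref{lbpm} is exactly $I(T_X)$. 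This map is injective, for distinct incomparable $w,w'$ have disjoint nonempty $B_w,B_{w'}$, while for comparable $w'\subsetneq w$ either a sibling subtree or the selected leaf of $S_w$ supplies a point of $X$ in $B_w\setminus B_{w'}$; it is surjective, since any nonsingular ball of $X$ is the trace of some $L_{T_w}$ with $L_{T_w}$ nonsingular, forcing $w\in I(T_Y)$. Finally $w\mapsto B_w$ preserves and reflects strict inclusion, hence is an isomorphism of the inclusion orders on $I(T_Y)$ and $I(T_X)$ and therefore a rooted-tree isomorphism $\widetilde{T}_Y\to\widetilde{T}_X$; combined with $\widetilde{T}_Y\simeq T$ this yields $\widetilde{T}_X\simeq T$. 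The principal obstacle is exactly this injectivity-and-surjectivity step: one must check that keeping only $(2-k_w)_+$ leaves at each node is simultaneously enough to keep every internal ball nonsingular and to separate each internal ball from its children, yet produces no spurious nonsingular ball in $X$.
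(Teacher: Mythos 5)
Your construction is exactly the one the paper intends: the corollary is stated as a consequence of ``analyzing the proof of Theorem~\ref{t36}'', whose lower-bound argument already selects, via Lemma~\ref{l3.4} and Lemma~\ref{l3.5*}, the $(2-\delta^+(v))_+$ leaf-children at each internal node of $T_Y$ that you take as $X$. Your proposal is correct and simply spells out the verification that $\widetilde{T}_X\simeq T$ (via traces of balls and Proposition~\ref{lbpm}) which the paper leaves implicit.
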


Let  $(X,d)$ be a metric space. Recall that the \emph{sphere} with a radius $t>0$ and a center $c\in X$ is the set
$$
S_t(c) = \{x\in X\colon d(x,c)=t\}.
$$

\begin{lemma}\label{l3.6.2}
Let $(X,d)$ be a finite ultrametric space and let $T_X$ be the representing tree of $(X,d)$. The following conditions are equivalent.
\begin{itemize}
  \item [$(i)$] For every nonsingular ball $B\in \mathbf B_X$ there are a point $c\in B$ and a number $t>0$ such that \begin{equation}\label{e3.12*}
      B=S_t(c)\cup \{c\}.
      \end{equation}
  \item [$(ii)$] For every internal node $v\in V(T_X)$ there is a leaf $u$ of $T_X$ such that $u$ is a direct successor of $v$ in $T_X$.
\end{itemize}
\end{lemma}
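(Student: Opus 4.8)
The plan is to reduce the statement to a purely local condition on a single nonsingular ball and then transfer it to the tree via Proposition~\ref{lbpm}. By Proposition~\ref{lbpm} the nonsingular balls $B \in \mathbf{B}_X$ correspond bijectively to the internal nodes $v$ of $T_X$, via $B = L_{T_v}$, and by Lemma~\ref{c25} together with the construction of the representing tree the direct successors of such a $v$ correspond precisely to the maximal proper subballs of $B$ (the parts of the complete multipartite graph $G'_{t,X}$ with $t=\diam B$, as in~\eqref{eq25}). A direct successor is a leaf if and only if the corresponding subball is a one-point set. Hence the whole lemma follows once I establish, for a single nonsingular ball $B$, the equivalence: condition~\eqref{e3.12*} holds for some $c \in B$ and $t>0$ if and only if one of the maximal proper subballs of $B$ is a singleton.

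First I would treat the easy direction ``singleton subball $\Rightarrow$ \eqref{e3.12*}''. Put $D = \diam B > 0$. If $\{c\}$ is one of the maximal proper subballs of $B$, then every other point of $B$ lies in a different subball and is therefore at distance exactly $D$ from $c$ (points in distinct maximal subballs realize the diameter). Thus $S_D(c) = B \setminus \{c\}$ and $B = S_D(c) \cup \{c\}$, so~\eqref{e3.12*} holds with $t = D$.

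For the converse, suppose $B = S_t(c) \cup \{c\}$ for some $c \in B$ and $t>0$. The key observation is that $t$ is forced to equal $D$: every $x \neq c$ satisfies $d(c,x) = t$, so for any two points $x,y \in B \setminus \{c\}$ the strong triangle inequality gives $d(x,y) \leqslant \max\{d(x,c), d(c,y)\} = t$, whence $\diam B \leqslant t$; combined with $d(c,x)=t$ this yields $t = D$. Consequently no point $x \neq c$ has $d(x,c) < D$, and therefore the maximal proper subball of $B$ containing $c$ --- which is exactly the set of points at distance less than $D$ from $c$ --- reduces to $\{c\}$. Thus $B$ has a singleton maximal subball, which completes the ball-level equivalence.

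Finally I would assemble the two directions with the bijection above. Condition $(i)$ asserts that \emph{every} nonsingular ball $B$ admits such $c$ and $t$, that is, every nonsingular ball has a singleton maximal subball; under the correspondence between nonsingular balls and internal nodes this is exactly the assertion that every internal node of $T_X$ has a leaf among its direct successors, namely condition $(ii)$. I expect the only subtle point to be the argument that $t$ must equal $\diam B$ (and hence that any centre witnessing~\eqref{e3.12*} must itself constitute a singleton maximal subball); everything else is bookkeeping with Proposition~\ref{lbpm} and Lemma~\ref{c25}.
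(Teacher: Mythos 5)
Your proof is correct and follows essentially the same route as the paper, which simply asserts that the equivalence follows from Definition~\ref{d2}, Theorem~\ref{t13} and Proposition~\ref{lbpm}; your write-up is a careful expansion of exactly that argument (balls $\leftrightarrow$ nodes, parts of the complete multipartite diametrical graph $\leftrightarrow$ direct successors, singleton part $\leftrightarrow$ leaf child). The observation that any witness $t$ in~\eqref{e3.12*} is forced to equal $\diam B$ is the right key step and is argued correctly.
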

\begin{proof}
The equivalence $(i)\Leftrightarrow(ii)$ follows from Definition~\ref{d2}, Theorem~\ref{t13} and Proposition~\ref{lbpm}.
\end{proof}
\begin{remark}\label{r23.6.3}
Equality~(\ref{e3.12*}) means that the ball $B$ contains a center $c$ of $B$ such that each point of $B$ expect the point $c$ belongs to the sphere $S_t(c)$.
\end{remark}

Using Lemma~{\ref{l3.6.2}} and arguing as in the proof of Theorem~\ref{t36} we obtain the following result.
\begin{theorem}\label{t3.6.2}
Let $T=T(r)$ be a finite rooted tree. Then there is a finite ultrametric space $(X,d)$ such that:
\begin{itemize}
  \item [$(i)$] $T\simeq \widetilde{T}_X$;
  \item [$(ii)$] For every nonsingular ball $B\in \mathbf B_X$ there are $c\in B$ and $t>0$ such that~(\ref{e3.12*}) holds;
  \item [$(iii)$] We have the equality
      \begin{equation}\label{e3.13*}
      |X| = |V(T)|+|\overline{L}_T|,
      \end{equation}
      where $\overline{L}_T$ is the set of leaves of the rooted tree $T=T(r)$.
\end{itemize}
Moreover, if $(Y,\rho)$ is a finite ultrametric space satisfying $(i)$ and $(ii)$ with $X=Y$, then the inequality
$$
|Y|\geqslant |V(T)|+|\overline{L}_T|
$$
holds.
\end{theorem}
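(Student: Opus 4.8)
The plan is to adapt the construction from the proof of Theorem~\ref{t36}, changing the leaf-attachment rule so that \emph{every} vertex of $T$ acquires a leaf-child, which by Lemma~\ref{l3.6.2} is exactly what condition $(ii)$ requires. Concretely, I would first build an auxiliary finite rooted tree $G=G(r)$ from $T$ by attaching new leaves: to every vertex $v$ with $\delta^+_T(v)\geqslant 1$ I attach one new leaf-child, and to every leaf $v$ of $T$ I attach two new leaf-children. Then each vertex of $V(T)$ becomes an internal node of $G$ of out-degree $\geqslant 2$ (a leaf of $T$ gains out-degree $2$, while an internal vertex gains out-degree $\delta^+_T(v)+1\geqslant 2$), every newly attached vertex is a leaf of $G$, and, crucially, every internal node of $G$ has at least one leaf-child. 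In particular $\delta^+_G(u)\neq 1$ for all $u$.

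Next I would equip $G$ with a labeling $l_G\colon V(G)\to L$ satisfying condition $(i)$ of Lemma~\ref{l3.4}: put $l_G(v)$ equal to the height of $v$ in $G$, so that leaves receive the label $0$, internal nodes receive strictly positive labels, and $l_G$ strictly decreases from a node to each of its direct successors. Lemma~\ref{l3.4} then produces a finite ultrametric space $(X,d)$ with $T_X\simeq G$. Since the attached vertices are precisely the leaves of $G$, the internal nodes of $G$ are exactly $V(T)$ and the subtree they induce is $T$; hence $\widetilde{T}_X\simeq T$, which is $(i)$. Because every internal node of $G\simeq T_X$ has a leaf-child, Lemma~\ref{l3.6.2} gives $(ii)$. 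Finally $|X|=|\overline{L}_G|$ equals the number of attached leaves, namely $(|V(T)|-|\overline{L}_T|)+2|\overline{L}_T|=|V(T)|+|\overline{L}_T|$, establishing $(iii)$.

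For the minimality (the "moreover" part), let $(Y,\rho)$ satisfy $(i)$ and $(ii)$, so that $\widetilde{T}_Y\simeq T$ and, by Lemma~\ref{l3.6.2}, every internal node of $T_Y$ has at least one leaf-child. By Lemma~\ref{l3.5*} the sets $\operatorname{DS}(v)$ of leaf-children of distinct nodes are disjoint, and their union is all of $\overline{L}_{T_Y}$, so that $|Y|=\sum_{v\in I(T_Y)}|\operatorname{DS}(v)|$. For each internal node $v$, corresponding under the isomorphism to some $v'\in V(T)$, condition $(ii)$ gives $|\operatorname{DS}(v)|\geqslant 1$; moreover, if $v'$ is a leaf of $T$ then $v$ has no internal children, so all its children are leaves, and the representing-tree inequality $\delta^+_{T_Y}(v)\geqslant 2$ (Theorem~\ref{t13}) forces $|\operatorname{DS}(v)|\geqslant 2$. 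Summing over $V(T)$ yields $|Y|\geqslant(|V(T)|-|\overline{L}_T|)+2|\overline{L}_T|=|V(T)|+|\overline{L}_T|$.

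The main obstacle is not conceptual but bookkeeping: one must check carefully that the tree induced on the internal nodes of $G$ is literally $T$ (so that $\widetilde{T}_X\simeq T$, not merely $T_X\simeq G$) and that the height labeling indeed meets the hypotheses of Lemma~\ref{l3.4}. Once the correct attachment recipe is identified — one leaf for each internal vertex, two for each leaf of $T$ — both the equality and the matching lower bound fall out by pairing this recipe with the disjointness statement of Lemma~\ref{l3.5*}.
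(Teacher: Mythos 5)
Your proposal is correct and follows exactly the route the paper intends: the paper gives no separate proof of Theorem~\ref{t3.6.2}, stating only that it follows ``using Lemma~\ref{l3.6.2} and arguing as in the proof of Theorem~\ref{t36},'' and your construction (attach one new leaf-child to each internal vertex of $T$ and two to each leaf of $T$, label by height, invoke Lemma~\ref{l3.4}) together with the lower bound via Lemmas~\ref{l3.6.2} and~\ref{l3.5*} is precisely that adaptation. The leaf count $(|V(T)|-|\overline{L}_T|)+2|\overline{L}_T|=|V(T)|+|\overline{L}_T|$ and the matching bound for $(Y,\rho)$ both check out.
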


\begin{corollary}\label{c362}
Let $T=T(r)$ be a finite rooted tree and let $(X,d)$ and $(Y,\rho)$ be finite ultrametric spaces for which $T\simeq \widetilde{T}_X \simeq \widetilde{T}_Y$,
$|X|=|Y|=|V(T)|+|\overline{L}_T|$ and~(\ref{e3.12*}) holds for every $B\in \mathbf B_X$ and every $B\in \mathbf B_Y$.
Then we have $T_X \simeq T_Y$.
\end{corollary}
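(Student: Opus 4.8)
The plan is to show that the extremal cardinality hypothesis forces the number of pendant leaves hanging from each internal node of $T_X$, and likewise of $T_Y$, to be completely determined by the combinatorics of $T$, so that $T_X$ and $T_Y$ are both isomorphic to one and the same rooted tree associated with $T$. This is the exact analogue of Corollary~\ref{c3.8*}, with the cardinality formula~\eqref{e3.13*} of Theorem~\ref{t3.6.2} now playing the rôle that $\sum_{v}(2-\delta^+(v))_+$ played for Theorem~\ref{t36}.

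First I would fix a rooted-tree isomorphism $\varphi_X\colon V(T)\to V(\widetilde{T}_X)=I(T_X)$ coming from $T\simeq\widetilde{T}_X$, and similarly a $\varphi_Y\colon V(T)\to V(\widetilde{T}_Y)=I(T_Y)$. Since the leaves of $T_X$ are exactly those children of internal nodes that do not themselves belong to $\widetilde{T}_X$, the rooted tree $T_X$ is recovered from $\widetilde{T}_X$ by attaching to each internal node a certain number of pendant leaves. Writing $\lambda_X(v)$ for the number of leaf children of $\varphi_X(v)$ in $T_X$, every leaf of $T_X$ is counted exactly once, so that $|X|=\sum_{v\in V(T)}\lambda_X(v)$, and the analogous equality holds for $Y$.

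Next I would bound each $\lambda_X(v)$ from below. Because~\eqref{e3.12*} holds for every $B\in\mathbf B_X$, condition $(ii)$ of Lemma~\ref{l3.6.2} gives $\lambda_X(v)\geqslant 1$ for every $v\in V(T)$. Moreover every internal node of $T_X$ has out-degree at least $2$, and when $v\in\overline{L}_T$ the node $\varphi_X(v)$ has no internal child, so all its children are leaves and hence $\lambda_X(v)\geqslant 2$. Summing these inequalities over $V(T)=\overline{L}_T\cup I(T)$ yields
$$
|X|=\sum_{v\in V(T)}\lambda_X(v)\geqslant 2|\overline{L}_T|+\bigl(|V(T)|-|\overline{L}_T|\bigr)=|V(T)|+|\overline{L}_T|,
$$
and identically for $Y$. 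As $|X|=|Y|=|V(T)|+|\overline{L}_T|$ by hypothesis, every one of these lower bounds is attained, forcing $\lambda_X(v)=\lambda_Y(v)=2$ for $v\in\overline{L}_T$ and $\lambda_X(v)=\lambda_Y(v)=1$ for $v\in I(T)$.

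Finally I would assemble the isomorphism $F\colon V(T_X)\to V(T_Y)$. On the internal nodes set $F=\varphi_Y\circ\varphi_X^{-1}$, a rooted-tree isomorphism $\widetilde{T}_X\to\widetilde{T}_Y$ carrying the root of $T_X$ to the root of $T_Y$. For each $v\in V(T)$ the nodes $\varphi_X(v)$ and $\varphi_Y(v)$ carry the same number $\lambda_X(v)=\lambda_Y(v)$ of leaf children, so any bijection between these two finite sets of leaves extends $F$ consistently, and the resulting $F$ preserves adjacency and the root. Hence $T_X\simeq T_Y$. The argument presents no serious obstacle; the only point demanding care is the passage from the summed bound to the termwise equalities, which is immediate since each summand is individually bounded below and the sum of these lower bounds already equals the prescribed total $|V(T)|+|\overline{L}_T|$.
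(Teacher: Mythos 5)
Your proof is correct and follows essentially the route the paper intends: the corollary is stated there without an explicit proof (as a consequence of ``analyzing the proof'' of Theorem~\ref{t3.6.2}, itself modeled on Theorem~\ref{t36}), and that analysis is precisely your counting argument — the sphere condition via Lemma~\ref{l3.6.2} forces at least one pendant leaf at each internal node and at least two at each node corresponding to a leaf of $T$, so the extremal cardinality $|V(T)|+|\overline{L}_T|$ pins down the number of leaf children at every node of $\widetilde{T}_X$ and $\widetilde{T}_Y$ identically, whence the isomorphism. No gaps.
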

\begin{remark}
Corollary~\ref{c3.8*} and Corollary~\ref{c362} can be regarded as statements of uniqueness of ultrametric space $X$ (up to isomorphism of $T_X$).

It is easy to see that
$$
\sum\limits_{v\in V(T)}(2-\delta^+(v))_{+} \leqslant \sum\limits_{v\in V(T)}1+(1-\delta^+(v))_{+}=|V(T)|+|\overline{L}_T|
$$
holds for every finite rooted tree $T=T(r)$.
Moreover the equality
\begin{equation}\label{ez0}
\sum\limits_{v\in V(T)}(2-\delta^+(v))_{+} =|V(T)|+|\overline{L}_T|
\end{equation}
holds if and only if we have
\begin{equation}\label{ez1}
(2-\delta^+(v))_{+} =1+(1-\delta^+(v))_{+}
\end{equation}
for all $v\in V(T)$. The nonnegative integer solutions of equation~(\ref{ez1}) are $\delta^+(v)=1$ and $\delta^+(v)=0$, i.e., ~(\ref{ez1}) holds for a rooted tree $T=T(r)$ if and only if the tree $T$ is a path $P$ whose vertices can be numbered so that
$$
V(P)=\{x_0,...,x_k \}, \quad k=|V(T)|, \quad x_0=r,
$$
and
$$
E(P)=\{\{x_0,x_1\},..., \{x_{k-1}, x_k\}\}.
$$
\end{remark}

For every finite rooted tree $T$ denote by  $\mathbf X_T$ the class of all finite ultrametric spaces satisfying $\widetilde{T}_X\simeq T$.
Now using Theorem~\ref{t36} and Theorem~\ref{t3.6.2} we obtain the following result.
\begin{theorem}\label{t3.6.3*}
Let $T=T(r)$ be a finite rooted tree. Then the following conditions are equivalent.
\begin{itemize}
  \item [$(i)$] For every $(X,d) \in \mathbf X_T$ and every nonsingular ball $B\in \mathbf B_X$ there are $c\in B$ and $t>0$ such that $B=S_t(c)\cup \{c\}$.
  \item [$(ii)$] $T_X$ has exactly one internal node at each level except the last level for every $X\in \mathbf X_T$.
  \item [$(iii)$] The rooted tree $T=T(r)$ is a path $(x_0,...,x_k)$ with $x_0=r$ and $k=|V(T)|-1$.
\end{itemize}
\end{theorem}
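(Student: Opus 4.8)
The plan is to prove the cycle $(iii)\Rightarrow(ii)\Rightarrow(i)\Rightarrow(iii)$, after translating $(i)$ and $(ii)$ into statements about the internal nodes of the representing trees. Two preliminary remarks organize everything. First, by Theorem~\ref{t36} the class $\mathbf X_T$ is nonempty for every finite rooted tree $T$, so none of the three conditions is vacuous. Second, for any $(X,d)\in\mathbf X_T$ the induced tree $\widetilde T_X$ on $I(T_X)$ is isomorphic to $T$, and since every proper ancestor of a node of $T_X$ is itself internal, the level of an internal node computed inside $T_X$ agrees with its level inside $\widetilde T_X$. Consequently condition $(ii)$ for a given $X$ is equivalent to saying that $\widetilde T_X$ carries exactly one node on each of its levels, while by Lemma~\ref{l3.6.2} condition $(i)$ for a given $X$ is equivalent to saying that every internal node of $T_X$ has a leaf among its direct successors.

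For $(iii)\Rightarrow(ii)$ I would assume $T$ is a path and note that then $\widetilde T_X\simeq T$ is a path for every $X\in\mathbf X_T$, so it has exactly one node per level; by the level-coincidence remark these are precisely the internal nodes of $T_X$, one per level, and the deepest level of $T_X$ consists only of leaves. This is $(ii)$.

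For $(ii)\Rightarrow(i)$, fix $X\in\mathbf X_T$ and write $x_0,\dots,x_k$ for the internal nodes, one on each level by $(ii)$. Since the parent of an internal node is internal and lies on the previous level, the $x_i$ form a chain in which $x_{i+1}$ is the only internal direct successor of $x_i$. By Lemma~\ref{l3.4} no node of $T_X$ has out-degree $1$, so each $x_i$ with $i<k$ has out-degree at least $2$ and therefore possesses, besides $x_{i+1}$, at least one leaf direct successor; the node $x_k$ has only leaf successors. Thus every internal node of $T_X$ has a leaf child, and Lemma~\ref{l3.6.2} gives $(i)$.

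The main work is $(i)\Rightarrow(iii)$, which I would carry out by contraposition and which I expect to be the chief obstacle, since it requires producing a single realizing space. Suppose $T$ is not a path, so some $v_0\in V(T)$ has $\delta^+(v_0)\geqslant 2$. Using the construction from the proof of Theorem~\ref{t36}, I would build $(X,d)$ whose representing tree $T_X$ is obtained from $T$ by attaching two new leaves to each node of out-degree $0$, one new leaf to each node of out-degree $1$, and nothing to the nodes of out-degree at least $2$. Every original node of $T$ then becomes internal while all attached vertices are leaves, so $\widetilde T_X\simeq T$ and $X\in\mathbf X_T$. Since $v_0$ received no new leaf and all its successors in $T_X$ are original nodes of $T$, hence internal, the internal node $v_0$ has no leaf direct successor; by Lemma~\ref{l3.6.2} this produces a nonsingular ball $B\in\mathbf B_X$ admitting no representation $B=S_t(c)\cup\{c\}$, so $(i)$ fails for this $X\in\mathbf X_T$. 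Throughout, the delicate point is to respect the quantifier ``for every $X\in\mathbf X_T$'': the implications toward $(i)$ and $(ii)$ must be verified for all realizing spaces, whereas to defeat $(i)$ it suffices to exhibit the one bad space furnished by the Theorem~\ref{t36} construction.
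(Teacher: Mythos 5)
Your proof is correct, and since the paper only states this theorem with the one-line remark that it follows ``using Theorem~\ref{t36} and Theorem~\ref{t3.6.2}'', your write-up supplies all the details the paper omits. The translation of $(i)$ via Lemma~\ref{l3.6.2} into ``every internal node of $T_X$ has a leaf child'' and of $(ii)$ into ``$\widetilde{T}_X$ has one node per level'' is exactly the right reduction, and the two easy implications are handled cleanly (the observation that levels of internal nodes agree in $T_X$ and $\widetilde{T}_X$, and that $\delta^+\geqslant 2$ forces a leaf child at every internal node of the chain, is the whole content there). For $(i)\Rightarrow(iii)$ you take a slightly more direct route than the one the paper's hint suggests: the natural reading of the paper is a cardinality comparison --- the Theorem~\ref{t36} space has $|X|=\sum_{v}(2-\delta^+(v))_+$, which by the remark following Corollary~\ref{c362} is strictly less than $|V(T)|+|\overline{L}_T|$ when $T$ is not a path, so by the minimality clause of Theorem~\ref{t3.6.2} that space cannot satisfy the sphere condition --- whereas you simply exhibit the node $v_0$ with $\delta^+(v_0)\geqslant 2$ in $T$, which receives no attached leaf in the Theorem~\ref{t36} construction and hence violates condition $(ii)$ of Lemma~\ref{l3.6.2} directly. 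Your version is more explicit and avoids invoking the lower bound of Theorem~\ref{t3.6.2} altogether; both are sound.
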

\begin{remark}\label{r8}
Some extremal properties of finite ultrametric spaces $X$ for which $\tilde{T}_X$ is a path will be given in Theorem~\ref{t3.10} of the next section of the paper (see also Theorem 3.4 from~\cite{DPT(Howrigid)}).
\end{remark}

In the rest of the section we obtain an estimation of the minimal cardinality $|X|$ of finite ultrametric spaces $X \in \mathbf X_T$ which have a hamiltonian diametrical graphs $G_X^d$ (see Definition~\ref{d2}).

Recall that the \emph{degree} $\delta_G(v)$ of a vertex $v$ of a simple graph $G$ is the number of vertices $u\in V(G)$ which are adjacent with $v$. A graph $G$ is \emph{hamiltonian} if $G$ contains a Hamilton cycle.

The following lemma is the well-known Dirak theorem~\cite[Theorem 10.1.1]{Di}.
\begin{lemma}\label{l4.13}
Let $G$ be a simple graph with $|V(G)|\geqslant 3$. If the inequality
\begin{equation}\label{e4.11}
  \delta_G(v)\geqslant \frac{1}{2}|V(G)|
\end{equation}
holds for every $v\in V(G)$, then $G$ is hamiltonian.
\end{lemma}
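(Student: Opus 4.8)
The plan is to reproduce the classical longest-path proof of Dirac's theorem; throughout write $n = |V(G)|$. First I would check that $G$ is connected. If it were not, some connected component $H$ would satisfy $|V(H)| \leqslant \frac{n}{2}$, and then any $v \in V(H)$ would have all its neighbours inside $H$, giving $\delta_G(v) \leqslant |V(H)| - 1 < \frac{n}{2}$ and contradicting~(\ref{e4.11}). Hence $G$ is connected.

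Next I would choose a path $P = (v_0, v_1, \ldots, v_k)$ of maximal length in $G$. By maximality every neighbour of the endpoint $v_0$ already lies on $P$ (a neighbour outside $P$ would extend it), and likewise for $v_k$; thus $N(v_0) \subseteq \{v_1, \ldots, v_k\}$ and $N(v_k) \subseteq \{v_0, \ldots, v_{k-1}\}$. Since $P$ has $k+1$ distinct vertices we also record $k \leqslant n-1$.

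The heart of the argument is a pigeonhole (rotation) step producing a cycle through all of $v_0, \ldots, v_k$. Consider the index sets
$$
A = \{\, i \in \{1, \ldots, k\} : \{v_0, v_i\} \in E(G) \,\}, \qquad
B = \{\, i \in \{1, \ldots, k\} : \{v_k, v_{i-1}\} \in E(G) \,\}.
$$
Both are subsets of the $k$-element set $\{1, \ldots, k\}$, and $|A| = \delta_G(v_0) \geqslant \frac{n}{2}$, $|B| = \delta_G(v_k) \geqslant \frac{n}{2}$, so $|A| + |B| \geqslant n > k$. Hence $A \cap B \neq \varnothing$; picking $i \in A \cap B$ produces edges $\{v_0, v_i\}$ and $\{v_{i-1}, v_k\}$, and the closed walk
$$
C = (v_0, v_1, \ldots, v_{i-1}, v_k, v_{k-1}, \ldots, v_i, v_0)
$$
is a cycle passing through every vertex of $P$. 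The step I expect to be the main obstacle is exactly this crossover: pinning down the correct index shift (matching $\{v_0, v_i\}$ with $\{v_{i-1}, v_k\}$) so that reversing the tail of $P$ closes up into a genuine cycle rather than an illegal walk, and confirming that the two crossover edges join distinct vertices.

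Finally I would upgrade $C$ to a Hamilton cycle using connectivity together with the maximality of $P$. If $C$ omitted some vertex, then since $G$ is connected there would be a vertex off $C$ adjacent to a vertex of $C$; deleting one edge of $C$ at that attachment point and appending the outside vertex would yield a path strictly longer than $P$, contradicting the choice of $P$. Therefore $C$ contains all $n$ vertices and is a Hamilton cycle, so $G$ is hamiltonian.
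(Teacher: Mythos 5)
Your proof is correct: it is the classical longest-path (rotation) proof of Dirac's theorem, and all the steps check out, including the index bookkeeping in the crossover step. The paper itself offers no proof of this lemma at all -- it is quoted as the well-known Dirac theorem with a reference to Theorem~10.1.1 of Diestel's book -- and your argument is precisely the standard one given there, so you have simply supplied the proof the paper omits. The only detail left implicit is that \eqref{e4.11} with $|V(G)|\geqslant 3$ forces $\delta_G(v)\geqslant 2$, hence the longest path has $k\geqslant 2$ and the closed walk $C$ has at least three vertices, so it is a genuine cycle.
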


\begin{theorem}\label{t4.14}
Let $(X,d)$ be a finite ultrametric space with $|X|\geqslant 3$ and let $(G_X, w)$ be a complete weighted graph such that $V(G_X)=X$ and, for all $x,y \in X$,
$$
w(\{x,y\})=d(x,y) \ \, \text { if } \ \, x\neq y.
$$
Then the following conditions are equivalent.
\begin{itemize}
  \item [$(i)$] There is a Hamilton cycle $C$ in $(G_X,w)$ such that $w(e_1)=w(e_2)$ for all $e_1, e_2 \in E(C)$.
  \item [$(ii)$] The diametrical graph $G_X^d=G_X^d[X_1,...,X_k]$ is hamiltonian.
  \item [$(iii)$] The inequality
  \begin{equation}\label{e4.12}
      2|X_j|\leqslant \sum\limits_{i=1}^k|X_i|
  \end{equation}
      holds for every part $X_j$ of the diametrical graph $G_X^d[X_1,...,X_k]$.
\end{itemize}
\end{theorem}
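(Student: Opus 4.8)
The plan is to prove the two equivalences $(i)\Leftrightarrow(ii)$ and $(ii)\Leftrightarrow(iii)$ separately, handling the first from the ultrametric structure and the second as a purely graph-theoretic statement about the complete multipartite graph $G_X^d=G_X^d[X_1,\dots,X_k]$ (Theorem~\ref{t13}), for which Dirac's theorem (Lemma~\ref{l4.13}) is exactly the tool prepared above.

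For $(ii)\Rightarrow(i)$ I would simply take a Hamilton cycle $C$ in $G_X^d$ and read it inside $(G_X,w)$: by Definition~\ref{d2} every edge of $G_X^d$ joins a pair at distance $\diam X$, so all edges of $C$ carry the common weight $\diam X$. The substantive direction is $(i)\Rightarrow(ii)$, where the key observation is that \emph{an equal-weight Hamilton cycle in an ultrametric space is automatically a diametrical cycle}. Concretely, if $C=(x_1,\dots,x_n)$ satisfies $d(x_i,x_{i+1})=r$ for every consecutive pair (indices modulo $n$), then for arbitrary $x_a,x_b$ one arc of $C$ between them is a path all of whose edges have weight $r$; iterating the strong triangle inequality along this path yields $d(x_a,x_b)\le r$, while the value $r$ is attained by adjacent vertices, so $\diam X=r$. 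Hence each edge of $C$ satisfies $d(x_i,x_{i+1})=\diam X$, i.e.\ $C\subseteq G_X^d$, and $G_X^d$ is hamiltonian.

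For the graph-theoretic equivalence I would work with $G_X^d[X_1,\dots,X_k]$ on $|X|$ vertices. To get $(iii)\Rightarrow(ii)$ I would compute degrees: each $v\in X_j$ is adjacent precisely to the $|X|-|X_j|$ vertices outside its own part, so $\delta_{G_X^d}(v)=|X|-|X_j|$ and the minimum degree equals $|X|-\max_j|X_j|$. Condition $(iii)$ reads $\max_j|X_j|\le\tfrac12|X|$, hence $\delta_{G_X^d}(v)\ge\tfrac12|X|=\tfrac12|V(G_X^d)|$ for every $v$; as $|X|\ge 3$, Lemma~\ref{l4.13} gives that $G_X^d$ is hamiltonian. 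For the converse $(ii)\Rightarrow(iii)$ I would use that each part $X_j$ is an independent set: in any Hamilton cycle no two vertices of $X_j$ are consecutive, so the $|X_j|$ vertices of $X_j$ split the cyclic order into $|X_j|$ arcs, each containing at least one vertex outside $X_j$; therefore $\sum_{i\ne j}|X_i|\ge|X_j|$, which is $2|X_j|\le\sum_{i=1}^k|X_i|$, and since $j$ is arbitrary $(iii)$ follows.

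The routine graph theory causes little trouble once Lemma~\ref{l4.13} is in hand, so I expect the main obstacle to be the metric bridge in $(i)\Rightarrow(ii)$: it is the only step that genuinely invokes the ultrametric hypothesis, and the point to get right is that the strong triangle inequality forces the common weight $r$ of an equal-weight Hamilton cycle to coincide with $\diam X$, which is what turns such a cycle into a cycle of the diametrical graph. I would also be mildly careful in $(ii)\Rightarrow(iii)$ that the ``arcs between consecutive $X_j$-vertices'' argument is applied cyclically and uses $|X_j|\ge 1$ together with $|X|\ge 3$.
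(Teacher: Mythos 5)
Your proposal is correct and follows essentially the same route as the paper: the degree count $\delta_{G_X^d}(v)=|X|-|X_j|$ combined with Dirac's theorem (Lemma~\ref{l4.13}) for $(iii)\Rightarrow(ii)$, the trivial reading of a diametrical Hamilton cycle for $(ii)\Rightarrow(i)$, and the independence of the parts $X_j$ along the cycle for $(ii)\Rightarrow(iii)$. The only difference is cosmetic: in $(i)\Rightarrow(ii)$ you iterate the strong triangle inequality along an arc of the equal-weight cycle to conclude $\diam X=r$, whereas the paper reaches the same equality by showing that the balls $B_t(x_i)$ centered at consecutive cycle vertices all coincide with $X$.
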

\begin{proof}
$(iii)\Rightarrow(i)$. Suppose inequality~(\ref{e4.12}) holds for every part $X_j$ of the diametrical graph $G_X^d=G_X^d[X_1,...,X_k]$. Let $x\in V(G_X^d)$. Then there is $j \in \{1,...,k\}$ such that $x\in X_j$. Since $G_X^d$ is complete $k$-partite, the equality
\begin{equation}\label{e4.13}
     \delta_{G_X^d}(x)= \sum\limits_{ \substack{i=1 \\ i \neq j} }^k|X_i|
  \end{equation}
holds. Inequality~(\ref{e4.12}) implies
\begin{equation}\label{e4.14}
      |X_j|\leqslant \frac{1}{2}\sum\limits_{i=1}^k|X_i|.
  \end{equation}
It follows from~(\ref{e4.13}) and~(\ref{e4.14}) that
\begin{multline*}
\delta_{G_X^d}(x)= \sum\limits_{i=1}^k|X_i|-|X_j|=|V(G_X^d)|
\geqslant |V(G_X^d)|-\frac12 \sum\limits_{i=1}^k|X_j|\\=|V(G_X^d)|-\frac12|V(G_X^d)|
=\frac12|V(G_X^d)|.
\end{multline*}
Since $x$ is an arbitrary vertex of $G_X^d$ and $|V(G_X^d)|=|X|\geqslant 3$, the diametrical graph $G_X^d$ is hamiltonian by Lemma~\ref{l4.13}.

$(ii)\Rightarrow(i)$. This implication is trivial.

$(i)\Rightarrow(ii)$. Let $(i)$ hold. Then there are $t\in \Sp{X}$ and  a Hamilton cycle $C^*=(x_1,...,x_n)$ in $(G_X,w)$ such that
\begin{equation}\label{e4.15}
 d(x_i, x_{i+1})=w(\{x_i, x_{i+1}\})=t
\end{equation}
for every $i\in \{1,...,n-1\}$. Let us consider the balls
$$
B_t(x_i)=\{x\in X\colon d(x,x_i)\leqslant t\}\in \mathbf B_X, \ \, i=1,...,n.
$$
Since $(X,d)$ is ultrametric,~(\ref{e4.15}) implies that $t=\diam B_t(x_i)$ for every $i\in \{1,...,n\}$. Now the statements
$$
B_t(x_i)\cap B_t(x_{i+1})\neq \varnothing \ \, \text{ and } \ \, \diam B_t(x_i)=\diam B_t (x_{i+1})
$$
and the ultrametricity of $(X,d)$ imply that $B_t(x_i)=B_t(x_{i+1})$ holds for every $i \in \{1,...,n-1\}$. Thus $B_t(x_1)=B_t(x_2)=...=B_t(x_n)$. Since $C^*$ is a Hamilton cycle, we have
$$
X = \bigcup\limits_{i=1}^n B_t(x )= B_t(x_1).
$$
The equality $X=B_t(x_1)$ holds if and only if $t=\diam X$. It means that $C^*$ is a Hamilton cycle in $G_X^d=G_X^d[X_1,...,X_k]$.

Let $X_{j_0}$ be a part of $G_X^d[X_1,...,X_k]$, $j_0 \in \{1,...,k\}$ and let $u,z$ be distinct points in $X_{j_0}$. Since $C^*$ is a Hamilton cycle in $C_X^d[X_1,...,X_k]$ there are distinct $e^1_z, e^2_z\in E(C^*)$ and distinct $e^1_y, e^2_y \in E(C^*)$ such that $z\in e^1_Z\cap e^2_Z$ and $y\in e^1_y\cap e^2_y$ and

$$
\{e_{z,C^*}^1, e_{z,C^*}^2\}\cap \{e_{y,C^*}^1, e_{y,C^*}^2\}=\varnothing.
$$
Consequently the inequality $2|X_{j_0}|\leqslant E(C^*)$ holds. For every cycle $C$ we have $|V(C)|=|E(C)|$. Hence we have
$$
2|X_{j_0}|\leqslant |V(C^*)|=|X|=\sum\limits_{i=1}^k|X_j|,
$$
for every $j_0\in \{1,...,k\}$.
Inequality~(\ref{e4.12}) follows.
\end{proof}

\begin{theorem}\label{t4.16}
Let $T=T(r)$ be a finite rooted tree with the root $r$, let $v_1,...,v_k$, $k\geqslant 1$,  be the direct successors of $r$ in $T$ and let $T_{v_1},...,T_{v_k}$ be the induced rooted subtrees of $T$ defined by~(\ref{e2.1}). Then there is a finite ultrametric space $(X,d)$ such that
\begin{itemize}
  \item [$(i)$] $T\simeq \widetilde{T}_X$;
  \item [$(ii)$] The diametrical graph $G_X^d$ is hamiltonian;
  \item [$(iii)$] The equality
  \begin{equation}\label{e4.16}
    |X|=
    \begin{cases}
    2\sum\limits_{u\in V(T{v_1})}(2-\delta^+(u))_+, &\text{if } \, \ k=1,\\
    \sum\limits_{u\in V(T)}(2-\delta^+(u))_+ \\ + \left(2 \max\limits_{1\leqslant i \leqslant k}\sum\limits_{u\in V(T{v_i})}(2-\delta^+(u))_+
      -\sum\limits_{u\in V(T)}(2-\delta^+(u))_+\right)_+,
     &\text{if } \, \ k\geqslant 2,
    \end{cases}
  \end{equation}
\end{itemize}
holds. Moreover if $(Y,\rho)$ is a finite ultrametric space satisfying $(i)$ and $(ii)$, then the inequality
 \begin{equation}\label{e4.16*}
    |Y|\geqslant
    \begin{cases}
    2\sum\limits_{u\in V(T{v_1})}(2-\delta^+(u))_+, &\text{if } \, \ k=1,\\
    \sum\limits_{u\in V(T)}(2-\delta^+(u))_+ \\ + \left(2 \max\limits_{1\leqslant i \leqslant k}\sum\limits_{u\in V(T{v_i})}(2-\delta^+(u))_+
      - \sum\limits_{u\in V(T)}(2-\delta^+(u))_+\right)_+,
     &\text{if } \, \ k\geqslant 2,
    \end{cases}
  \end{equation}
holds.
\end{theorem}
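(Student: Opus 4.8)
The plan is to realize $X$ by the construction used in the proof of Theorem~\ref{t36} and then to append singular balls at the root until the balance condition of Theorem~\ref{t4.14} is met, reading the cardinality off from the subtree sums. Throughout write $n(S)=\sum_{u\in V(S)}(2-\delta^+(u))_+$ for a rooted subtree $S$ of $T$; since every finite tree has a leaf and each leaf contributes $2$, we always have $n(S)\geqslant 2$.

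Recall from the proof of Theorem~\ref{t36} that attaching two new leaves to each leaf of $T$ and one new leaf to each node of $T$ of out-degree $1$ yields a rooted tree $G$ with $G\simeq T_X$ for some finite ultrametric space $(X,d)$, with $\overline{L}_G=X$ and $|X|=n(T)$; moreover the leaves of $G$ lying in the subtree of $G$ rooted at $v_i$ are precisely those produced inside $T_{v_i}$, so that subtree carries $n(T_{v_i})$ leaves. Since $G_X^d$ has no isolated vertices we have $G_X^d=G'_{\diam X,X}$, and Lemma~\ref{c25} (with $t=\diam X$) identifies the parts of the diametrical graph $G_X^d$ with the leaf-sets of the subtrees rooted at the direct successors of the root of $T_X$. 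Each $v_i$ is an internal node of $T_X$ and thus contributes a part of size $n(T_{v_i})$, while any singular ball placed at the root contributes a part of size $1$.

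To enforce (ii) I append $m$ singular balls at the root. This does not affect $\widetilde{T}_X\simeq T$ (the new nodes are leaves, hence outside $\widetilde{T}_X$), it raises $|X|$ to $n(T)+m$, and, since each $n(T_{v_i})\geqslant 2$, it keeps the largest part equal to $\max_{1\leqslant i\leqslant k}n(T_{v_i})$. By Theorem~\ref{t4.14} the graph $G_X^d$ is hamiltonian if and only if $2|X_j|\leqslant|X|$ for every part, so it suffices to take $m$ as small as possible with $2\max_i n(T_{v_i})\leqslant n(T)+m$, that is $m=\bigl(2\max_i n(T_{v_i})-n(T)\bigr)_+$; the size-$1$ parts are harmless because $|X|\geqslant 3$ in all cases. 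For $k\geqslant 2$ one has $n(T)=\sum_i n(T_{v_i})$, and this $m$ produces exactly~\eqref{e4.16}. For $k=1$ the base construction already forces a single leaf at the root, and using $n(T)=1+n(T_{v_1})$ together with $n(T_{v_1})\geqslant 1$ one checks that $n(T)+m$ collapses to $2n(T_{v_1})$, again matching~\eqref{e4.16}. This gives conditions (i)--(iii).

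For the lower bound let $(Y,\rho)$ satisfy (i) and (ii), and let $w_i\in V(T_Y)$ be the internal node corresponding to $v_i$ under $\widetilde{T}_Y\simeq T$. The ball $Y_i=L_{(T_Y)_{w_i}}\in\mathbf{B}_Y$, taken with the induced metric, has representing tree $(T_Y)_{w_i}$, whence $\widetilde{T}_{Y_i}\simeq(\widetilde{T}_Y)_{w_i}\simeq T_{v_i}$ and Theorem~\ref{t36} yields $|Y_i|\geqslant n(T_{v_i})$. Each $Y_i$ is a part of the hamiltonian graph $G_Y^d$, so Theorem~\ref{t4.14} gives $2|Y_i|\leqslant|Y|$, and therefore $|Y|\geqslant 2\max_i n(T_{v_i})$. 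Combining this with the unconditional estimate $|Y|\geqslant n(T)$ from Theorem~\ref{t36} and rewriting $\max(a,b)=a+(b-a)_+$ gives $|Y|\geqslant n(T)+\bigl(2\max_i n(T_{v_i})-n(T)\bigr)_+$, which is~\eqref{e4.16*} for $k\geqslant 2$ and, by the same collapse as above, for $k=1$. The main obstacle is the bookkeeping of part sizes: one must confirm that appending singular balls at the root neither perturbs $\widetilde{T}_X\simeq T$ nor creates an oversized part, after which both the sharp construction and its optimality are immediate from the hamiltonicity criterion of Theorem~\ref{t4.14}.
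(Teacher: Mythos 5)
Your proposal is correct and follows essentially the same route as the paper: realize $T$ via the construction of Theorem~\ref{t36}, pad with singular balls at the root until the balance condition of Theorem~\ref{t4.14} holds, and obtain the lower bound by applying Theorem~\ref{t36} to the parts of the diametrical graph together with the inequality $2|Y_j|\leqslant|Y|$. Your uniform bookkeeping via $m=\bigl(2\max_i n(T_{v_i})-n(T)\bigr)_+$ cleanly covers both the $k=1$ case and the $k\geqslant 2$ case that the paper only sketches.
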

\begin{proof}
Consider first the case $k=1$. By Theorem~\ref{t36} there is a finite ultrametric space $(X^1, d^1)$ such that $T_{v_1}\simeq\widetilde{T}_{X^1}$ and
$$
|X^1|=\sum\limits_{u\in V(T_{v_1})}(2-\delta^+(u))_+.
$$
Let $S$ be a positive real number such that $S>\diam X^1$ and let $Z^1$ be a set such that $|Z^1|=|X^1|$ and $Z^1\cap X^1=\varnothing$. Let us define on $X=X^1\cup Z^1$ the ultrametric $d$ by the rule
$$
d(x,y)=
\begin{cases}
d^1(x,y), &\text{if} \ \, x,y,\in X^1;\\
0, &\text{if} \ \, x=y;\\
S, &\text{otherwise}.
\end{cases}
$$
Then
$$
G_X^d=G_X^d[X^1,\{z_1\},...,\{z_m\}]
$$
where $\{z_1\},...,\{z_m\}$ are the one-point subsets of $Z^1$. It is clear that equality~(\ref{e4.16}) holds and, by Theorem~\ref{t4.14}, $G_X^d$ is hamiltonian.

Let $(Y, \rho)$ be a finite ultrametric space for which $T \simeq \widetilde{T}_Y$ and such that the diametrical graph $G_Y^d=G_Y^d[Y_1,...,Y_n]$ is hamiltonian. Since $\delta^+(r)=1$ (in $T$), the statement $T\simeq \widetilde{T}_Y$ implies that there is a unique $Y_j$, $j\in \{1,...,n\}$ with $|Y_j|\geqslant 2$. Without loss of generality we can set that $|Y_1|\geqslant 2$. Then for the subspace $Y_1$ of the ultrametric space $(Y,\rho)$ we have $\widetilde{T}_{Y_1}\simeq T_{v_1}$. By Theorem~\ref{t36} the inequality
\begin{equation}\label{e4.17}
  |Y_1|\geqslant \sum\limits_{u\in V(T_{v_1})}(2-\delta^+(u))_+
\end{equation}
holds. Moreover using Theorem~\ref{t4.14} we obtain the inequality
\begin{equation}\label{e4.18}
n-1=\sum\limits_{i=2}^n|Y_i|\geqslant |Y_1|.
\end{equation}
Now~(\ref{e4.16*}) follows from~(\ref{e4.17}), ~(\ref{e4.18}) and the equality
$$
|Y|=\sum\limits_{i=1}^n|Y_i|.
$$

For the case $k\geqslant 2$, the proof  is similar. Note that Corollary~\ref{c4.6*} can be used for verification of inequality~(\ref{e4.16*}).
\end{proof}

\section{From isomorphic rooted trees to isometric spaces}

It follows form Theorem~\ref{l3.3} that any two isometric finite ultrametric spaces have isomorphic representing trees. The converse does not hold. The spaces $(X,d)$ and $(Y, \rho)$ can fail to be isometric even if $T_X$ is isomorphic to $T_Y$ as rooted trees and $\operatorname{Sp}(X)=\operatorname{Sp}(Y)$.

Let us denote by $\mathcal{TSI}$ (tree-spectrum isometric) the class of all finite ultrametric spaces~$(X,d)$ which satisfy the following condition: If $(Y,\rho)$ is a finite ultrametric space such that $T_X \simeq T_Y$ and $\operatorname{Sp}(X) = \operatorname{Sp}(Y)$, then $(X,d)$ and $(Y,\rho)$ are isometric.

In this section we describe characteristic structural properties of representing trees $T_X$ for spaces $(X,d)$ belonging to some subclasses of~$\mathcal{TSI}$.

Let $T=T(r)$ be a rooted tree. The \emph{height} of $T$ is the number of edges on the longest path between the root and a leaf of $T$. The height of $T$ will be denoted by $h(T)$. For every node $v$ of $T$, the level of $v$ can be defined by the following inductive rule: The level of $r$ is zero and if $v \in V(T)$ has a level $x$, then every direct successor of $v$ has the level $x+1$. We denote by $\operatorname{lev}(v)$ the level of a node $v \in V(T)$. Thus
\begin{equation}\label{e3.1}
	h(T)= \max_{v \in \overline{L}_T} \operatorname{lev}(v).
\end{equation}

\begin{theorem}\label{t3.5}
	Let $(X,d)$ be a finite ultrametric space with $|X|\geqslant 2$. Suppose that the representing tree $T_X$ has an injective internal labeling. Then the following statements are equivalent.
	\begin{enumerate}
		\item [$(i)$] The space $(X,d)$ belongs to $\mathcal{TSI}$.
		\item [$(ii)$] The equality $\operatorname{lev}(v)=\operatorname{lev}(u)$ implies
		$$
		\operatorname{lev}(v)=\operatorname{lev}(u)=h(T_X)-1 \text{ and } \delta^+(u)=\delta^+(v)
		$$
		for all distinct internal nodes $u$, $v \in V(T_X)$.
	\end{enumerate}
\end{theorem}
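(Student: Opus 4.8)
The plan is to pass, via Theorem~\ref{l3.3}, from isometry of spaces to isomorphism of \emph{labeled} representing trees, and then to recast the membership $X\in\mathcal{TSI}$ as a transitivity statement for the automorphism group of the underlying unlabeled tree. Fix $T=T_X$ and put $S=\Sp{X}\setminus\{0\}$. Since $T_X$ has injective internal labeling, Theorem~\ref{t1} together with the surjectivity of the labeling (Lemma~\ref{l2}) puts the internal nodes of $T$ in bijection with $S$, so $|I(T)|=|\Sp{X}|-1$. If $(Y,\rho)$ satisfies $T_Y\simeq T_X$ and $\Sp{Y}=\Sp{X}$, then $T_Y$ has the same number of internal nodes while its labeling is surjective onto $\Sp{Y}\setminus\{0\}=S$; hence $T_Y$ \emph{also} has injective internal labeling. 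Consequently every such $Y$ is encoded, up to isometry, by a \emph{valid relabeling} of $T$: a bijection $I(T)\to S$ that strictly decreases along every edge (the decrease being forced by Lemma~\ref{l3.4} and the construction of representing trees), all leaves retaining the label $0$; conversely Lemma~\ref{l3.4} turns each valid relabeling into a genuine space $Y$. By Theorem~\ref{l3.3}, two valid relabelings give isometric spaces exactly when they differ by an automorphism of $T$. Thus $X\in\mathcal{TSI}$ if and only if $\operatorname{Aut}(T)$ acts transitively on the set of valid relabelings.

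The engine of the proof is an elementary \emph{swap lemma}. Given two incomparable internal nodes $u,v$, there is a valid relabeling in which they carry consecutive values of $S$, and transposing these two values yields another valid relabeling. I would then determine when the two differ by an automorphism $\phi$. Because the labeling is injective \emph{on internal nodes} while all leaves share the value $0$, any such $\phi$ must send each internal node $x\neq u,v$ to the unique internal node carrying its value, namely $x$ itself, and must interchange $u$ and $v$. A tree automorphism fixing every internal node except a transposed pair $u,v$ exists precisely when $u$ and $v$ are siblings all of whose children are leaves and $\delta^+(u)=\delta^+(v)$; call such a pair \emph{exceptional} (the two subtrees are then equal brooms that $\phi$ simply swaps). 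Hence the transposed relabeling is isometric to the original one if and only if $\{u,v\}$ is exceptional. I would also record the standard fact that any two valid relabelings are joined by a finite chain of consecutive transpositions of incomparable internal nodes.

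For $(ii)\Rightarrow(i)$ I would exploit the structure that $(ii)$ imposes: there is exactly one internal node at each level $0,\dots,h(T_X)-2$, these form a chain $w_0,\dots,w_{h(T_X)-2}$, and every internal node of level $h(T_X)-1$ is a child of $w_{h(T_X)-2}$, carries only leaves below it, and shares a common out-degree. Any two incomparable internal nodes then lie among these level-$(h(T_X)-1)$ nodes and are therefore exceptional, so every consecutive transposition preserves the isometry class; by the connectivity above, all valid relabelings are isometric to $X$, giving $(i)$.

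For the converse I argue contrapositively. If $(ii)$ fails I produce a \emph{non}-exceptional incomparable internal pair, whose transposition then furnishes a competitor $Y$ not isometric to $X$, so $X\notin\mathcal{TSI}$. The required structural claim is that if \emph{every} incomparable internal pair were exceptional, then all internal nodes of any common level would have only leaf-children and equal out-degree; a deepest-path argument — an internal node lying on a root-to-leaf path of length $h(T_X)$ at a level $\le h(T_X)-2$ cannot have only leaf-children, yet would be incomparable to, hence exceptional with, the given same-level nodes — forces every level carrying two internal nodes to be $h(T_X)-1$, which is exactly~$(ii)$. I expect the swap lemma to be the main obstacle, since its bookkeeping is delicate precisely because the labeling is injective only on internal nodes while every leaf is labeled $0$: one must show that a realizing automorphism is pinned down on internal nodes yet free on leaves, which is what yields the exact exceptional-pair criterion; the deepest-path argument converting $\neg(ii)$ into a non-exceptional pair is the secondary difficulty.
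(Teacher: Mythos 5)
Your proposal is correct, and it takes a genuinely different route from the paper's. The paper proves $(i)\Rightarrow(ii)$ by two separate ad hoc constructions: in the case of two internal nodes at a level $k\leqslant h(T_X)-2$ it redistributes the whole label sets $S_w$, $S_{u^0}$ of two disjoint internal subtrees into new sets $S_w^*\neq S_w$, $S_{u^0}^*\neq S_{u^0}$ and shows no label-preserving isomorphism survives; in the remaining case it swaps the labels of two level-$(h(T_X)-1)$ nodes and compares the cardinalities of the corresponding balls. For $(ii)\Rightarrow(i)$ it builds the label-preserving isomorphism by hand along the chain of internal nodes. You instead reformulate $X\in\mathcal{TSI}$ as transitivity of $\operatorname{Aut}(T_X)$ on the linear extensions of the ancestor poset on $I(T_X)$ (your ``valid relabelings''), and derive \emph{both} implications from one local criterion: an adjacent transposition at an incomparable internal pair is realized by a tree automorphism exactly when the pair is ``exceptional'' (siblings, only leaf children, equal out-degree). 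Your automorphism bookkeeping is right --- injectivity of the internal labeling pins $\phi$ to the identity off $\{u,v\}$ and forces the sibling/broom structure --- and your deepest-path argument correctly converts the failure of $(ii)$ into a non-exceptional incomparable pair, while under $(ii)$ the internal nodes at levels $\leqslant h(T_X)-2$ form a chain so the only incomparable pairs are the (exceptional) level-$(h(T_X)-1)$ siblings. The trade-off: the paper's argument is self-contained but case-heavy and somewhat opaque about \emph{why} exactly those swaps are harmless; yours unifies the two failure modes and makes the mechanism transparent, at the cost of importing two standard facts about linear extensions of finite posets (any incomparable pair can be made adjacent, and adjacent transpositions connect all linear extensions), which you correctly flag and which do need to be recorded with proofs. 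Both proofs share the counting step (via Theorem~\ref{t1} and surjectivity of the labeling) showing that any competitor $T_Y$ inherits injective internal labeling; that is where the theorem's standing hypothesis enters, and you use it in the same way the paper does.
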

\begin{proof}
	The root of $T_X$ is the unique node having the zero level and, in addition, each node $u$ with $\operatorname{lev}(u)=h(T_X)$ is a leaf of $T_X$. Hence if $(ii)$ does not hold, then we have exactly one from the following two alternatives:
	\begin{enumerate}
		\item [$(i_1)$] There are two distinct internal nodes $u$, $v \in V(T_X)$ and $k\in \{1, \ldots, h(T_X)-2\}$ such that
		\begin{equation*}%\label{t3.5e1}
		\operatorname{lev}(u)=\operatorname{lev}(v)=k;
		\end{equation*}
		\item [$(i_2)$] For every $k\in \{1, \ldots, h(T_X)-2\}$ there exists exactly one $w \in I(T_X)$ with $\operatorname{lev}(w)=k$ but there exist at least two distinct nodes $u$, $v \in I(T_X)$ such that
		\begin{equation}\label{t3.5e2}
		\operatorname{lev}(u)=\operatorname{lev}(v)=h(T)-1 \text{ and } \delta^+(u)\neq\delta^+(v).
		\end{equation}
	\end{enumerate}
$(i) \Rightarrow (ii)$. Let $(X,d) \in \mathcal{TSI}$. Suppose $(i_1)$ holds. Denote by $k_0$ the smallest $$
k\in \{1, \ldots, h(T_X)-2\}
$$
which satisfies $(i_1)$. Let $u^0, v^0 \in I(T_X)$ and $u^0\neq v^0$ and $\operatorname{lev}(u^0)=\operatorname{lev}(v^0)=k_0$.  If for every $w \in I(T_X)$ with $\operatorname{lev}(w)=k_0$ all children of $w$ are leaves of $T_X$, then we have
	$$
	k_0=h(T_X)-1.
	$$
	The last equality contradicts the condition $k_0\in \{1, \ldots, h(T_X)-2\}$. Hence there is $w \in I(T_X)$ satisfying the equality
	$$
	\operatorname{lev}(w)=k_0
	$$
	and having a direct successor which is an internal node of $T_X$. Without loss of generality we suppose $w\neq u^0$. Define the sets $W$ and $U$ as
	$$
	W=I(T_w) \text{ and } U=I(T_{u^0}),
	$$
	(see~\eqref{e2.1}). Since $\operatorname{lev}(u^0)=\operatorname{lev}(w)$, the sets $W$ and $U$ are disjoint. Write $n=|W|$ and $m=|U|$. Then we have $n+m\geqslant 3$. Let $l\colon V(T_X)\to \Sp{X}$ be the labeling of $T_X$ defined by~\eqref{e2.7} and let
	$$
	S_w = \{l(t)\colon t \in W\} \text{ and } S_{u^0} = \{l(t)\colon t \in U\}.
	$$
	Since the restriction $l|_{I(T_X)}$ is injective, we have
	$$
	|S_w|=n, \quad |S_{u^0}|=m \text{ and } S_w\cap S_{u^0}=\varnothing
	$$
	and, in addition,
	$$
	0\notin S_w \cup S_{u^0}.
	$$
	All elements $l^w \in S_w$ and $l^{u^0} \in S_{u^0}$ can be enumerated such that
	\begin{equation}\label{t3.5e3}
	l_1^w < l_2^w < \ldots < l_n^w \text{ and } l_1^{u^0} < l_2^{u^0} < \ldots < l_m^{u^0}.
	\end{equation}
	Since $n+m\geqslant 3$, there are disjoint subsets $S_w^*$ and $S_{u^0}^*$ of the set $S_w \cup S_{u^0}$ such that
	$$
	S_{u^0}^* \cup S_w^* = S_w \cup S_{u^0}
	$$
	and
	$$
	|S_w^*|=n, \quad |S_{u^0}^*|=m
	$$
	and
	\begin{equation}\label{t3.5e4}
	S_{u^0} \neq S_w^* \neq S_w \neq S_{u^0}^* \neq S_{u^0}.
	\end{equation}
	Similarly to~\eqref{t3.5e3} we suppose that the elements of $S_u^*$ and $S_w^*$ are enumerated as
	\begin{equation}\label{t3.5e5}
	l_1^{*w} < l_2^{*w} < \ldots < l_n^{*w} \text{ and } l_1^{*{u^0}} < l_2^{*{u^0}} < \ldots < l_m^{*{u^0}},
	\end{equation}
	$l_i^{*w} \in S_w^*$, $i=1,\ldots, n$, and $l_j^{*{u^0}}\in S_{u^0}^*$, $j=1,\ldots, m$.
	
	Let us define a new labeling $l^*$ on $V(T_X)$ by the rule
	\begin{equation}\label{t3.5e6}
	l^*(v)=\begin{cases}
	l(v), & \text{if } v \in V(T_X)\setminus (W\cup U),\\
	l_i^{*w}, & \text{if } v \in W \text{ and } l(v)=l_i^w,\ i\in \{1, \ldots, n\},\\
	l_j^{*{u^0}}, & \text{if } v \in U \text{ and } l(v)=l_j^{u^0},\ j\in \{1, \ldots, m\}.
	\end{cases}
	\end{equation}
	Using~\eqref{t3.5e3}, \eqref{t3.5e4} and Lemma~\ref{l3.4} we can prove that there is an ultrametric $d^*$ an the set $X^*=X$ such that the rooted trees $T_X$ and $T_{X^*}$ are isomorphic in the sense of Definition~\ref{d3.1} and
	$$
	\operatorname{Sp}(X)=\operatorname{Sp}(X^*).
	$$
	Since $(X,d)\in \mathcal{TSI}$, the trees $T_X$ and $T_{X^*}$ are also isomorphic in the sense of Definition~\ref{d3.2}. (See Theorem~\ref{l3.3}.) Let $f\colon V(T_X) \to V(T_{X^*})$ be the corresponding isomorphism. Since $f$ preserves the labeling, equality~\eqref{t3.5e6} imply that
	$$
	f(v)=v
	$$
	holds for every internal node $v \in V(T_X)\setminus (W\cup U)$. Moreover
	$$
	\operatorname{lev}(v)=\operatorname{lev}(f(v))
	$$
	holds for every $v \in V(T)$. Hence we have either $f(w)={u^0}$ or $f(w)=w$. If $f(w)={u^0}$, then $S_W=S_U^*$ holds, contrary to~\eqref{t3.5e4}. Similarly if $f(w)=w$, then we obtain $S_W=S_W^*$, that also contradicts~\eqref{t3.5e4}.
	
	Suppose $(i_2)$ is valid. Let $u$ and $v$ be two distinct internal nodes of $T_X$ for which~\eqref{t3.5e2} holds. Consider the new labeling $l^*\colon V(T_X)\to \Sp{X}$,
	$$
	l^*(w)=\begin{cases}
	l(w), & \text{if } u\neq w \neq v,\\
	l(u), & \text{if } w = v,\\
	l(v), & \text{if } w = u.\\
	\end{cases}
	$$
	By Lemma~\ref{l3.4} there is an ultrametric $d^*$ on the set $X^*=X$ such that $l^*$ is the labeling on the rooted tree $T_{X^*}$ generated by the ultrametric space $(X^*, d^*)$. It is easy to see that $T_X$ and $T_{X^*}$ are isomorphic as rooted trees and we have $\operatorname{Sp}(X^*) = \operatorname{Sp}(X)$. Now~$(i)$ implies that $(X,d)$ and $(X^*, d^*)$ are isometric, which is impossible because there are the unique ball $B$ in $(X,d)$ with the diameter $l(u)$ and the unique ball $B^*$ in $(X^*,d^*)$ with same diameter and $|B|=\delta^+(u)\neq \delta^+(v)=|B^*|$.
	
$(ii) \Rightarrow (i)$. Let $(Y, \rho)$ be a finite ultrametric space with $\operatorname{Sp}(Y) = \operatorname{Sp}(X)$ and let $f\colon V(T_X)\to V(T_Y)$ be an isomorphism of the rooted trees $T_X$ and $T_Y$. We first prove that the internal labeling of $T_Y$ is injective. This is equivalent to that
	\begin{equation}\label{t3.5e7}	|\mathbf{B}_Y|=|Y|+|\operatorname{Sp}(Y)|-1
	\end{equation}
	holds (see Theorem~\ref{t1}). The equality $\operatorname{Sp}(X)=\operatorname{Sp}(Y)$ implies
	\begin{equation}\label{t3.5e8}
	|\operatorname{Sp}(X)|=|\operatorname{Sp}(Y)|.
	\end{equation}
	Moreover, we have
	\begin{equation}\label{t3.5e9}
	|\mathbf{B}_Y|=|\mathbf{B}_X| \text{ and } |X|=|Y|
	\end{equation}
	because $f$ is an isomorphism of the rooted trees $T_X$ and $T_Y$. Since the internal labeling of $T_X$ is injective,
	\begin{equation}\label{t3.5e10}
	|\mathbf{B}_X|=|X|+|\operatorname{Sp}(X)|-1
	\end{equation}
	holds. Now \eqref{t3.5e7} follows from \eqref{t3.5e8}, \eqref{t3.5e9} and \eqref{t3.5e10}.
	
	Let $\widetilde{T}_X$ and $\widetilde{T}_Y$ be the subtrees of $T_X$ and, respectively, of $T_Y$ induced by $I(T_X)$ and, respectively, by $I(T_Y)$. Using $(ii)$ we can prove that the function $\widetilde{\Psi}\colon V(\widetilde{T}_X) \to V(\widetilde{T}_Y)$ defined by the rule
	\begin{equation}\label{t3.5e11}
	(\widetilde{\Psi}(u)=v) \Leftrightarrow (v \in \widetilde{T}_Y \text{ and } \operatorname{diam} u= \operatorname{diam} v)
	\end{equation}
is an isomorphism of the labeled rooted trees $\widetilde{T}_X$ and $\widetilde{T}_Y$. Note that $\widetilde{\Psi}$ is well-defined because the internal labelings of $T_X$ and $T_Y$ are injective and $\operatorname{Sp}(X) = \operatorname{Sp}(Y)$. The leaves of the trees $\widetilde{T}_X$ and $\widetilde{T}_Y$ are internal nodes of $T_X$ at the level $h(T_X)-1$ and, respectively, of $T_Y$ at the level $h(T_Y)-1$. By statement $(ii)$ the number of children is one and the same for all these nodes. Consequently $\widetilde{\Psi}$ can be extended to an isomorphism $\Psi \colon V(T_X) \to V(T_Y)$ of the labeled rooted trees $T_X$ and $T_Y$. By Theorem~\ref{l3.3}, $(X,d)$ and $(Y, \rho)$ are isometric.
\end{proof}

\begin{remark}
	Statement $(ii)$ means that $T_X$ has exactly one internal node at each level except the levels $h(T)$ and $h(T)-1$ and the number of children is constant for all internal nodes at the level $h(T)-1$. (See Figure~\ref{fig3} for example of a representing tree satisfying this property.)
\end{remark}

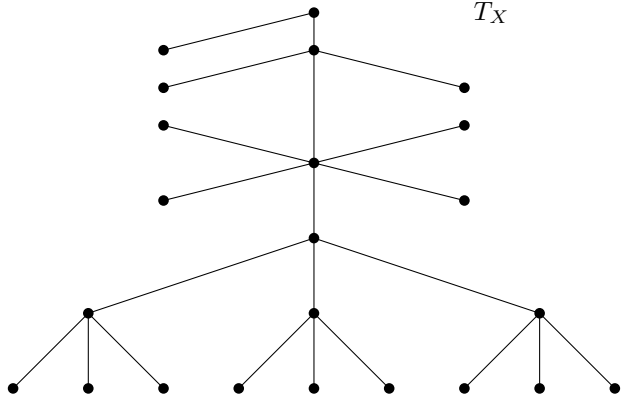
\begin{figure}
	\begin{tikzpicture}[scale=1]
	\draw (4,1) -- (4,6);
	\fill[black] (4,6) circle(2pt);
	\fill[black] (2,5.5) circle(2pt);
	\draw (4,6) -- (2,5.5);
	
	\fill[black] (4,5.5) circle(2pt);
	\fill[black] (2,5) circle(2pt);
	\fill[black] (6,5) circle(2pt);
	\draw (4,5.5) -- (2,5);
	\draw (4,5.5) -- (6,5);
	
	\fill[black] (4,4) circle(2pt);
	\fill[black] (2,3.5) circle(2pt);
	\fill[black] (6,3.5) circle(2pt);
	\fill[black] (2,4.5) circle(2pt);
	\fill[black] (6,4.5) circle(2pt);
	\draw (4,4) -- (2,3.5);
	\draw (4,4) -- (6,3.5);
	\draw (4,4) -- (2,4.5);
	\draw (4,4) -- (6,4.5);
		
	\fill[black] (4,3) circle(2pt);
	\fill[black] (1,2) circle(2pt);
	\fill[black] (7,2) circle(2pt);
	\draw (4,3) -- (1,2);
	\draw (4,3) -- (7,2);
	
	\fill[black] (4,2) circle(2pt);
	\fill[black] (3,1) circle(2pt);
	\fill[black] (4,1) circle(2pt);
	\fill[black] (5,1) circle(2pt);
	\draw (4,2) -- (3,1);
	\draw (4,2) -- (5,1);
	
	\fill[black] (0,1) circle(2pt);
	\fill[black] (1,1) circle(2pt);
	\fill[black] (2,1) circle(2pt);
	\draw (1,2) -- (0,1);
	\draw (1,2) -- (1,1);
	\draw (1,2) -- (2,1);
	
	\fill[black] (6,1) circle(2pt);
	\fill[black] (7,1) circle(2pt);
	\fill[black] (8,1) circle(2pt);
	\draw (7,2) -- (6,1);
	\draw (7,2) -- (7,1);
	\draw (7,2) -- (8,1);
\draw (6,6)  node [right] {$T_{X}$};

	\end{tikzpicture}
	\caption{The internal labeling of $T_X$ is injective if and only if $(X,d)\in \mathcal{TSI}$.}
	\label{fig3}
\end{figure}

\begin{theorem}\label{t3.7}
	Let $T = T(r)$ be a rooted tree with $\delta^+(u)\geqslant 2$ for every internal node $u$. Then the following conditions are equivalent.
	\begin{enumerate}
		\item[$(i)$] The tree $T$ contains exactly one internal node at the levels $1$, $\ldots$, $h(T)-2$ and at most two internal nodes at the level $h(T)-1$. Moreover, if $u$ and $v$ are different internal nodes with
$$		\operatorname{lev}(u)=\operatorname{lev}(v)=h(T)-1,
$$
then $\delta^+(u)=\delta^+(v)$ holds.
\item[$(ii)$] The statement $T_X \simeq T$ implies $(X,d) \in \mathcal{TSI}$ for every finite ultrametric space $(X,d)$.
	\end{enumerate}
\end{theorem}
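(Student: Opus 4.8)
The plan is to reduce the whole statement, via Theorem~\ref{l3.3}, to the question of when the \emph{labeled} representing tree is determined up to labeled isomorphism by the underlying rooted tree $T$ together with the spectrum. If $T_X\simeq T$, then any competitor $(Y,\rho)$ in the definition of $\mathcal{TSI}$ with $T_Y\simeq T_X$ and $\operatorname{Sp}(Y)=\operatorname{Sp}(X)$ automatically satisfies $T_Y\simeq T$, and by Theorem~\ref{l3.3} the spaces $X$ and $Y$ are isometric precisely when their labeled representing trees are isomorphic in the sense of Definition~\ref{d3.2}. Hence $(ii)$ is equivalent to the following: for each finite $L\subseteq\mathbb R^+$ with $0\in L$, any two admissible labelings of $T$ in the sense of Lemma~\ref{l3.4} (strictly decreasing along edges, positive on $I(T)$, zero on $\overline{L}_T$, with value set $L$) yield labeled rooted trees that are isomorphic as labeled rooted trees. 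I would verify this reformulation.

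For $(i)\Rightarrow(ii)$ I would exploit the rigidity forced by $(i)$: the internal nodes occupy a single path through levels $0,\dots,h(T)-2$, and level $h(T)-1$ carries either one internal node, or two internal nodes $a,b$ which are both children of the unique internal node $p$ at level $h(T)-2$ with $\delta^+(a)=\delta^+(b)$. Along the path the labeling is strictly decreasing, hence injective, so the $h(T)-1$ path-labels must be the top $h(T)-1$ values of $L$ assigned by level. Since $l(a),l(b)<l(p)$, the number of internal nodes at level $h(T)-1$ together with $|L|$ forces $|L\setminus\{0\}|\in\{h(T),h(T)+1\}$: the value $h(T)$ forces $l(a)=l(b)$, while $h(T)+1$ forces $\{l(a),l(b)\}$ to be the two smallest positive values of $L$. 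In the latter case the only ambiguity is the transposition $a\leftrightarrow b$, which is a genuine automorphism of $T$ because $\delta^+(a)=\delta^+(b)$ and the subtrees $T_a$, $T_b$ are stars with equally many leaves. Thus in every case the labeled tree is determined up to labeled isomorphism by $T$ and $L$, giving $(ii)$.

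For $(ii)\Rightarrow(i)$ I would argue by contraposition, splitting according to whether condition $(ii)$ of Theorem~\ref{t3.5} (one internal node at levels $1,\dots,h(T)-2$ and constant out-degree at level $h(T)-1$) holds. If it fails, I would realize $T$ by a space $X$ carrying an injective internal labeling---such a labeling exists for any finite tree by assigning distinct, strictly edge-decreasing positive values and invoking Lemma~\ref{l3.4}---and then Theorem~\ref{t3.5} yields $X\notin\mathcal{TSI}$, so $(ii)$ fails. The remaining possibility is that condition $(ii)$ of Theorem~\ref{t3.5} holds but $(i)$ fails; the only way this occurs is that level $h(T)-1$ carries at least three internal nodes, all children of the unique node $p$ at level $h(T)-2$ and all of equal out-degree. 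Here injective labelings already give spaces in $\mathcal{TSI}$, so I must use \emph{non-injective} labelings: fixing admissible values $s_1<s_2<l(p)$ I would build $X$ with label multiset $\{s_1,s_1,s_2,\dots\}$ on these nodes and $Y$ with $\{s_1,s_2,s_2,\dots\}$, all other labels identical. Both have representing tree isomorphic to $T$ and the same spectrum, yet the differing multisets preclude any labeled isomorphism, so $X$ and $Y$ are not isometric, whence $X\notin\mathcal{TSI}$ and $(ii)$ fails.

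The main obstacle I anticipate is exactly this last case, which is the genuinely new content relative to Theorem~\ref{t3.5}: the injective-labeling criterion there tolerates arbitrarily many equal-out-degree nodes at level $h(T)-1$, and one must exhibit the multiset obstruction to isometry directly. Additional care is needed in the step $(i)\Rightarrow(ii)$ to certify that the transposition $a\leftrightarrow b$ is a \emph{labeled} isomorphism rather than merely an unlabeled one (this is precisely where $\delta^+(a)=\delta^+(b)$ enters), and to confirm the bookkeeping $|L\setminus\{0\}|\in\{h(T),h(T)+1\}$ that pins down the labels of the bottom internal nodes.
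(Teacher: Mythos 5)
Your proof is correct and follows essentially the same route as the paper's: the forward direction rests on the case analysis $|\operatorname{Sp}(X)|\in\{h(T)+1,\,h(T)+2\}$ together with the swap of the two bottom internal nodes (where $\delta^+(a)=\delta^+(b)$ is used exactly as you indicate), and the converse combines Theorem~\ref{t3.5} applied to an injective internal labeling with a non-injective relabeling of the $\geqslant 3$ internal nodes at level $h(T)-1$ that changes the label multiset and hence destroys isometry. The only cosmetic difference is your explicit reformulation via Theorem~\ref{l3.3} and Lemma~\ref{l3.4} as a uniqueness statement for admissible labelings of $T$ with a given value set, which the paper leaves implicit.
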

\begin{proof}
	$(i)\Rightarrow (ii)$. Let condition (i) hold and let $X$, $Y$ be finite ultrametric spaces such that ${T}_X \simeq {T}\simeq {T}_Y$ and $\Sp{X}=\Sp{Y}$. Let $\Psi\colon V(T_X)\to V(T_Y)$ be an isomorphism of the rooted trees $T_X$ and $T_Y$. Suppose first there is exactly one internal node of $T$ at the level $h(T)-1$. Then it is easy to see that
	$$
		X\ni x \mapsto \{x\} \mapsto \Psi(\{x\}) \mapsto y \in Y,
	$$
	is an isometry, where $y$ is the unique point of the one-point set $\Psi(\{x\})$.
	
	Let $u$, $v \in I(T_X)$ and $\bar{u}$, $\bar{v} \in I(T_Y)$ and let
	$$
	\operatorname{lev}{u}=\operatorname{lev}{v}=\operatorname{lev}{\bar{u}}=\operatorname{lev}{\bar{v}}=h(T)-1
	$$
	and $u\neq v$ and $\bar{u} \neq \bar{v}$. There exist  two cases:
	\begin{itemize}
		\item [$(i_1)$] $|\Sp{X}|=|\Sp{Y}|=h(T)+1$;
		\item [$(i_2)$] $|\Sp{X}|=|\Sp{Y}|=h(T)+2$.
	\end{itemize}
Case ($i_1$) is trivial. Let ($i_2$) hold. Without loss of generality, suppose
$$
l(u)=l(\bar{u})\neq l(v)=l(\bar{v}).
$$
Let $\Psi\colon V(T_X)\to V(T_Y)$ be an isomorphism of $T_X$ and $T_Y$ such that $\Psi(u)=\bar{u}$ and $\Psi(v)=\bar{v}$. Doing direct calculations we see that
	$$
		X\ni x \mapsto \{x\} \mapsto \Psi(\{x\}) \mapsto y \in Y
	$$
	is an isometry between $X$ and $Y$, where $y$ is the unique element of the one-point set $\Psi(\{x\})$.
	
$(ii)\Rightarrow (i)$. Suppose the implication
	\begin{equation}\label{t3.7e1}
	(T_X \simeq T(r)) \Rightarrow ((X,d) \in \mathcal{TSI})
	\end{equation}
	holds for every finite ultrametric space $(X,d)$. Let us consider a labeling $l_T$ on $V(T)$ such that the restriction $l_T|_{I(T)}$ is injective and $l_T(u)=0$ for every $u \in \overline{L}_T$ and $l_T(v) < l_T(w)$, whenever $v$ is a direct successor of $w$. By Lemma~\ref{l3.4} there is a finite ultrametric space $(X,d)$ such that $T_X$ and the labeled rooted tree $T(r, l_T)$ are isomorphic. Theorem~\ref{t3.5} implies
	$$
	\operatorname{lev}(v)=\operatorname{lev}(u)=h(T_X)-1 \text{ and } \delta^+(u)=\delta^+(v)
	$$
	for all distinct $u$, $v \in I(T_X)$. Since $T_X$ and $T=T(r, l_T)$ are isomorphic, the similar statement holds for $T(r, l_T)$. Suppose now that
	$$
	u_1, \ldots, u_p \in I_T, \quad p\geqslant 3
	$$
	and
	$$
	\operatorname{lev}(u_1)= \ldots =\operatorname{lev}(u_p) = h(T)-1.
	$$
	Let $l^j\colon V(T)\to \mathbb R^+$, $j=1, 2$, be labelings of $T$ such that
	$$
	l^1(u)=l^2(u)=0
	$$
	for all $u \in \overline{L}_{T}$ and
	$$
	l^j(v) < l^j(w), \quad j=1,2
	$$
	whenever $v$ is a direct successor of $w$ and that
	\begin{equation}\label{t3.7e2}
	l^1(u_1)\neq l^2(u_1),\quad l^2(u_k)= l^1(u_1),\quad l^1(u_k)= l^2(u_1)
	\end{equation}
	for $k=2$, $\ldots$, $p$. Let $(X_1, d_1)$ and $(X_2, d_2)$ be finite ultrametric spaces,  corresponding respectively to the labeled rooted trees $T(r,l^1)$ and $T(r,l^2)$. Then $\operatorname{Sp}(X_1) = \operatorname{Sp}(X_2)$ but $(X_1, d_1)$ and $(X_2, d_2)$ are not isometric, contrary to~\eqref{t3.7e1}.
\end{proof}

Theorem~\ref{t3.7} gives us a sufficient condition, for $(X,d) \in \mathcal{TSI}$, which is independent of the properties of $\operatorname{Sp}(X)$. We may also provide the membership $(X,d) \in \mathcal{TSI}$ using only~$\operatorname{Sp}(X)$.

\begin{proposition}\label{p3.6}
	Let $A$ be a finite subset of $\mathbb{R}^+$ and let $0\in A$. Then the following conditions are equivalent
	\begin{enumerate}
		\item [$(i)$] The inequality $|A| \leqslant 3$ holds.
		\item [$(ii)$] For every finite ultrametric space $(X,d)$ the equality $\operatorname{Sp}(X) = A$ implies $(X,d) \in \mathcal{TSI}$.
	\end{enumerate}
\end{proposition}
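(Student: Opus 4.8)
The plan is to route everything through Theorem~\ref{l3.3}: two finite ultrametric spaces are isometric exactly when their labeled representing trees are isomorphic in the sense of Definition~\ref{d3.2}. Since the membership $(X,d)\in\mathcal{TSI}$ demands that a bare isomorphism of the \emph{unlabeled} rooted trees, together with coincidence of spectra, already forces isometry, the whole statement reduces to a single question: for a fixed spectrum $A$, must an isomorphism of the underlying rooted trees automatically respect the labelings? I will prove the two implications separately — the forward one by a rigidity argument for small $A$, the reverse one by an explicit construction for large $A$.

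For $(i)\Rightarrow(ii)$ I assume $|A|\leqslant 3$ and take any $(X,d)$ with $\operatorname{Sp}(X)=A$. Writing $A=\{0\}$, $A=\{0,a\}$, or $A=\{0,a,b\}$ with $0<a<b$, I will use the strict-decrease property of representing trees (condition $(i)$ of Lemma~\ref{l3.4}) together with the fact that the root carries the label $\max A$ to show that the label of every node of $T_X$ is determined by its level and its leaf/internal status: the root carries $\max A$, every leaf carries $0$, and in the three-point case every internal non-root node must sit at level $1$ and carry $a$ (its parent is labelled strictly above it, hence is labelled $b$, hence is the root). The identical description applies to any $(Y,\rho)$ with $\operatorname{Sp}(Y)=A$. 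Consequently any isomorphism $f\colon T_X\to T_Y$ of rooted trees, preserving the root, levels, and leaf/internal status, automatically satisfies $l_Y(f(v))=l_X(v)$; thus it is an isomorphism of labeled trees, and by Theorem~\ref{l3.3} the spaces $X$ and $Y$ are isometric, i.e.\ $(X,d)\in\mathcal{TSI}$.

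For the converse I argue contrapositively: assuming $|A|\geqslant 4$, I exhibit a space with spectrum $A$ outside $\mathcal{TSI}$. Write $A=\{0,a_1,\dots,a_n\}$ with $0<a_1<\dots<a_n$ and $n\geqslant 3$. Using Lemma~\ref{l3.4} I build one rooted tree $T$ carrying two labelings $l_1,l_2$ of common image $A$: a rigid descending chain $w_0,\dots,w_{n-3}$ realizes the large labels $a_n>\dots>a_3$ (each intermediate chain node is given an extra leaf child so that its out-degree is $\geqslant 2$), and the bottom node $w_{n-3}$ has two internal children $u$ and $v$ with $2$ and $3$ leaf children respectively. I set $\{l_i(u),l_i(v)\}=\{a_1,a_2\}$ and let $l_1,l_2$ differ only by swapping $a_1$ and $a_2$ on $u$ and $v$. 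Both labelings satisfy condition~$(i)$ of Lemma~\ref{l3.4} and have image exactly $A$, so each yields an ultrametric space $X_i$ with $T_{X_i}\simeq T$ and $\operatorname{Sp}(X_i)=A$. The spaces $X_1,X_2$ then share an isomorphic unlabeled tree and an identical spectrum, and I will check that they are not isometric, which shows $X_1\notin\mathcal{TSI}$ and defeats $(ii)$.

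The crux, and the step I expect to demand the most care, is the rigidity of $T$: I must verify that every automorphism of the rooted tree $T$ fixes each $w_j$ and fixes $u$ and $v$. This follows by induction down the chain — at each $w_j$ the continuing child is non-leaf while the extra child is a leaf, and at $w_{n-3}$ the three subtrees hanging below it (a single leaf, the $3$-node tree at $u$, the $4$-node tree at $v$) are pairwise non-isomorphic — so no automorphism can move $u$ or $v$. Since $l_1(u)=a_1\neq a_2=l_2(u)$ while every automorphism fixes $u$, no automorphism transports $l_1$ to $l_2$; hence $(T,l_1)$ and $(T,l_2)$ are non-isomorphic as labeled trees and $X_1,X_2$ are non-isometric by Theorem~\ref{l3.3}. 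I will close by noting that $|A|=3$ is genuinely sharp: when $A=\{0,a,b\}$ this construction degenerates, because only $a$ is available for non-root internal nodes and there is no room to permute labels — precisely the obstruction exploited in the forward implication, where the forced labelings in fact coincide.
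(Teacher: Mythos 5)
Your proof is correct. The paper explicitly omits its own proof of Proposition~\ref{p3.6} (``The proof is simple, so that we omit it here''), so there is nothing to compare against; your argument is a legitimate and natural filling-in of that gap. Both halves check out: for $|A|\leqslant 3$ the label of every node of $T_X$ is indeed forced by its root/internal/leaf status (the strict decrease of labels along root-to-leaf paths leaves no freedom when only one positive non-maximal value is available), so any rooted-tree isomorphism between $T_X$ and $T_Y$ with $\operatorname{Sp}(Y)=A$ is automatically label-preserving and Theorem~\ref{l3.3} applies; and for $|A|\geqslant 4$ your chain-plus-two-asymmetric-siblings tree satisfies condition $(i)$ of Lemma~\ref{l3.4}, is rigid (each chain node has a unique non-leaf child, and the subtrees at $u$ and $v$ have different sizes), and the $a_1\leftrightarrow a_2$ swap therefore yields two spaces with isomorphic rooted trees and equal spectra that are not isometric.
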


The proof is simple, so that we omit it here. See Figure~\ref{fig4} for an example of representing tree of a finite ultrametric space $(X,d)$ with \mbox{$|\operatorname{Sp}(X)| = 3$}.

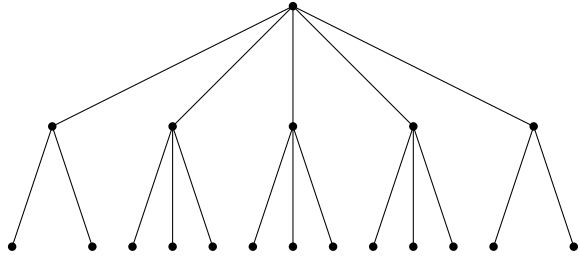
\begin{figure}
	\begin{tikzpicture}[scale=0.8]
	\draw (1,2) -- (5,4) -- (9,2);
	\draw (3,2) -- (5,4) -- (7,2);
	\draw (5,4) -- (5,2);
	\fill[black] (1,2) circle(2pt);
	\fill[black] (5,4) circle(2pt);
	\fill[black] (3,2) circle(2pt);
	\fill[black] (5,2) circle(2pt);
	\fill[black] (7,2) circle(2pt);
	\fill[black] (9,2) circle(2pt);
	
	\draw (1/3,0) -- (1,2) -- (5/3,0);
	\fill[black] (1/3,0) circle(2pt);
	\fill[black] (5/3,0) circle(2pt);
	
	\draw (7/3,0) -- (3,2) -- (11/3,0);
	\draw (3,2) -- (3,0);
	\fill[black] (7/3,0) circle(2pt);
	\fill[black] (3,0) circle(2pt);
	\fill[black] (11/3,0) circle(2pt);
	
	\draw (13/3,0) -- (5,2) -- (17/3,0);
	\draw (5,2) -- (5,0);
	\fill[black] (13/3,0) circle(2pt);
	\fill[black] (5,0) circle(2pt);
	\fill[black] (17/3,0) circle(2pt);
	
	\draw (19/3,0) -- (7,2) -- (23/3,0);
	\draw (7,2) -- (7,0);
	\fill[black] (19/3,0) circle(2pt);
	\fill[black] (7,0) circle(2pt);
	\fill[black] (23/3,0) circle(2pt);
	
	\draw (25/3,0) -- (9,2) -- (29/3,0);
	\fill[black] (25/3,0) circle(2pt);
	\fill[black] (29/3,0) circle(2pt);
	\end{tikzpicture}
	\caption{For every finite ultrametric space $(Y,\rho)$ with $h(T_Y)\leqslant 2$ there is a finite ultrametric space $(X,d)$ such that $|\Sp{X}|\leqslant 3$ and $T_Y\simeq T_X$.}
	\label{fig4}
\end{figure}

In the rest of the section we describe extremal properties of some $(X,d)\in \mathcal{TSI}$.

\begin{lemma}\label{p3.7}
	Let $(X,d)$ be a finite nonempty ultrametric space. Then the inequality
	\begin{equation}\label{p3.7e1}
	h(T_X) \leqslant |\operatorname{Sp}(X)|-1
	\end{equation}
	holds.
\end{lemma}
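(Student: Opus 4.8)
The plan is to exploit the strict decrease of the labeling $l\colon V(T_X)\to\Sp{X}$ from~\eqref{e2.7} along any descending chain of nodes. By the construction of the representing trees --- equivalently, by condition~$(i)$ of Lemma~\ref{l3.4} applied to $T=T_X$ --- we have $l(v)<l(u)$ whenever $v$ is a direct successor of $u$, since passing to a part of the diametrical decomposition of a ball strictly decreases its diameter.

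First I would select a root-to-leaf path of maximal length. By the definition of the height this path has exactly $h(T_X)$ edges, hence $h(T_X)+1$ nodes $r=w_0,w_1,\ldots,w_{h(T_X)}$, where each $w_{i+1}$ is a direct successor of $w_i$ and $w_{h(T_X)}$ is a leaf. Applying the strict monotonicity along this chain gives
$$
l(w_0)>l(w_1)>\cdots>l(w_{h(T_X)}),
$$
so these $h(T_X)+1$ labels are pairwise distinct. Since every label lies in $\Sp{X}$, we conclude $h(T_X)+1\leqslant|\Sp{X}|$, which is precisely~\eqref{p3.7e1}.

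There is no real obstacle here; the entire content of the argument is the monotonicity of $l$ down the tree, and the bound comes for free once a single longest chain is exhibited. The only point worth a remark is the degenerate case $|X|=1$, for which $T_X$ is a single node, $h(T_X)=0$ and $|\Sp{X}|=1$, so that~\eqref{p3.7e1} holds with equality; for $|X|\geqslant 2$ the argument above applies verbatim.
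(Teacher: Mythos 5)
Your proof is correct and follows the same route as the paper: the labeling $l$ is strictly decreasing along any root-to-leaf chain, so a longest such chain exhibits $h(T_X)+1$ pairwise distinct elements of $\operatorname{Sp}(X)$, giving $h(T_X)\leqslant|\operatorname{Sp}(X)|-1$. The paper states this in two lines (citing the monotonicity of $l$ and equality~\eqref{e3.1}); your version merely spells out the same argument, including the harmless degenerate case $|X|=1$.
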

\begin{proof}
The inequality
	$$
	l(u) < l(v)
	$$
	holds whenever~$u$ is a direct successor of~$v$. Now~\eqref{p3.7e1} follows from~\eqref{e3.1}.
\end{proof}

\begin{lemma}\label{l3.8}
	The inequality
	\begin{equation}\label{l3.8e1}
	h(T_X)\leqslant |\mathbf{B}_X|-|X|
	\end{equation}
	holds for every finite nonempty ultrametric space $(X,d)$. This inequality becomes the equality if and only if
	$$
	h(T_X)=|\operatorname{Sp}(X)|-1\quad \text{and}\quad |\operatorname{Sp}(X)| = |\mathbf{B}_X|-|X|-1.
	$$
\end{lemma}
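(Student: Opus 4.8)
The plan is to derive inequality~\eqref{l3.8e1} by concatenating two estimates already proved in the paper, and then to read off the equality case from the requirement that both links of the chain be tight. First I would recall Lemma~\ref{p3.7}, giving $h(T_X)\leqslant |\Sp{X}|-1$, together with inequality~\eqref{e56} of Remark~\ref{r2.5}, giving $|\Sp{X}|\leqslant |\mathbf{B}_X|-|X|+1$. Chaining these yields
$$
h(T_X)\leqslant |\Sp{X}|-1 \leqslant \bigl(|\mathbf{B}_X|-|X|+1\bigr)-1 = |\mathbf{B}_X|-|X|,
$$
which is exactly~\eqref{l3.8e1}. Since both ingredients hold for every finite nonempty ultrametric space (including the one-point case, where all the quantities are checked directly), so does~\eqref{l3.8e1}.

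For the equality statement I would use the elementary fact that a chain $a\leqslant b\leqslant c$ satisfies $a=c$ if and only if $a=b$ and $b=c$. Applied to the displayed chain, equality in~\eqref{l3.8e1} holds precisely when $h(T_X)=|\Sp{X}|-1$ (tightness of Lemma~\ref{p3.7}) and $|\Sp{X}|=|\mathbf{B}_X|-|X|+1$ (tightness of~\eqref{e56}) hold simultaneously. These are the two asserted conditions; note that together they indeed force $h(T_X)=|\mathbf{B}_X|-|X|$, consistent with the equality in~\eqref{l3.8e1}. By the final sentence of Remark~\ref{r2.5} (equivalently, condition~$(iv)$ of Theorem~\ref{t1}), the second equality is the same as asserting that the internal labeling of $T_X$ is injective.

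I expect no genuine obstacle here: the argument is a two-step inequality chain plus the trivial observation about when such a chain collapses to an equality. The only point deserving care is the sign bookkeeping when combining~\eqref{e56} with Lemma~\ref{p3.7}, since the additive constant $+1$ in the equality condition $|\Sp{X}|=|\mathbf{B}_X|-|X|+1$ is precisely the one appearing in~\eqref{e56}, and a slip there would misstate the characterization.
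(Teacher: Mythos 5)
Your proof is correct and is essentially identical to the paper's own argument, which likewise concatenates Lemma~\ref{p3.7} with inequality~\eqref{e56} into the double inequality $h(T_X)+1\leqslant |\Sp{X}|\leqslant |\mathbf{B}_X|-|X|+1$ and reads off the equality case from tightness of both links. Your sign bookkeeping is in fact the right one: the equality condition must be $|\Sp{X}|=|\mathbf{B}_X|-|X|+1$, so the ``$-1$'' in the second displayed condition of the lemma's statement is a misprint that your derivation implicitly corrects.
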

\begin{proof}
	Let $(X,d)$ be a finite nonempty ultrametric space. Using Lemma~\ref{p3.7} and Remark~\ref{r2.5} we obtain the double inequality
	$$
	h(T_X)+1\leqslant |\operatorname{Sp}(X)|\leqslant |\mathbf{B}_X|-|X|+1,
	$$
	that implies the desired result.
\end{proof}

\begin{lemma}\label{l3.9}
	Let $(X,d)$ be a finite ultrametric space with $|X|\geqslant 2$, and let $T_X$ have exactly one internal node at each level expect the last level. Then for every $Y\subseteq X$, $|Y|\geqslant 2$, the representing tree $T_Y$ of the space $(Y,d)$ also has exactly one internal node at each level expect the last level.
\end{lemma}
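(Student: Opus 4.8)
The plan is to recast the hypothesis as a statement about the inclusion order on balls and then to exploit the fact that balls in a subspace are traces of balls in the ambient space. First I would record the following reformulation: for a finite ultrametric space with at least two points, the representing tree has exactly one internal node at each level except the last if and only if the nonsingular balls can be listed as a chain $B_0 \supsetneq B_1 \supsetneq \cdots \supsetneq B_m$ with $B_0 = X$. Indeed, by Proposition~\ref{lbpm} the internal nodes of $T_X$ are exactly the nonsingular balls $L_{T_v}$. If there is a single internal node at each level, then the unique internal node $v_{k+1}$ at level $k+1$ is a child of the unique internal node $v_k$ at level $k$, so the sets $L_{T_{v_k}}$ are strictly nested (strictness holds because every internal node of $T_X$ has at least two children); conversely, if the nonsingular balls form a chain, then $B_{k+1}$ is the unique maximal proper nonsingular sub-ball of $B_k$, hence the only internal child of $B_k$, which places exactly one internal node on each of the levels $0,\dots,m=h(T_X)-1$.

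Next I would describe the balls of a subspace $Y\subseteq X$ in terms of those of $X$. For every $c\in Y$ and $r\geqslant 0$ one has $B_r(c)\cap Y=\{y\in Y\colon d(y,c)\leqslant r\}$, so every ball of $Y$ is the intersection with $Y$ of a ball of $X$. By the previous paragraph every ball of $X$, as a set, is either a singleton or one of the sets $B_0,\dots,B_m$; consequently every ball of $Y$ is either a singleton $\{c\}$ with $c\in Y$ or a trace $B_k\cap Y$. Since the $B_k$ are nested, so are the traces, $B_0\cap Y\supseteq B_1\cap Y\supseteq\cdots\supseteq B_m\cap Y$. Hence the nonsingular balls of $Y$—which are precisely those traces $B_k\cap Y$ with $\abs{B_k\cap Y}\geqslant 2$—form a chain under inclusion.

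Finally I would apply the reformulation in the reverse direction to $Y$: since $\abs{Y}\geqslant 2$ and the nonsingular balls of $Y$ form a chain, the representing tree $T_Y$ has exactly one internal node at each level except the last, which is the desired conclusion.

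The step I expect to require the most care is the first one, the equivalence between the level-structure of $T_X$ and the chain condition on its balls, together with the accompanying claim that the only balls of $X$, viewed as sets, are the $B_k$ and the one-point sets. Verifying that the ball $B_r(c)$ taken in $X$ always coincides as a set with some $B_k$ or a singleton, and that each $B_k\cap Y$ is genuinely a ball of $Y$ (realised, for instance, as $\{y\in Y\colon d(y,c)\leqslant \diam B_k\}$ for any $c\in B_k\cap Y$), are the routine but essential bookkeeping points; once these are in place the remainder of the argument is purely order-theoretic.
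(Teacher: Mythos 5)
Your proof is correct, but it follows a genuinely different route from the paper's. The paper reduces to deleting a single point ($Y=X\setminus\{x\}$, iterated), and then splits into two cases according to whether $x$ is the only leaf at its level; in the second case it writes $X$ explicitly as a nested chain of balls $B_n\subset\cdots\subset B_1$ and checks that deleting $x$ yields the corresponding chain for $Y$. You instead prove up front that the level condition is equivalent to the nonsingular balls forming a chain under inclusion --- which is precisely the equivalence $(i)\Leftrightarrow(ii)$ of Theorem~\ref{t3.10}, stated in the paper immediately \emph{after} this lemma and attributed there to the definition of the representing trees --- and then observe that every ball of a subspace $Y$ is the trace $B\cap Y$ of a ball $B$ of $X$, so that traces of a chain again form a chain. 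Your argument handles an arbitrary subset in one step, with no induction and no case analysis, and it isolates the real reason the property is hereditary (being a chain is preserved under taking traces); the small price is that you must carry out carefully the translation between the level structure of $T_X$ and the inclusion order on $\mathbf{B}_X$ (in particular that every internal node of a representing tree has at least two children, which gives the strictness of the inclusions, and that the parent of an internal node is the smallest nonsingular ball properly containing it). Note also that for your final step you only need the inclusion ``every nonsingular ball of $Y$ is some $B_k\cap Y$''; the converse claim that each trace $B_k\cap Y$ with at least two points is itself a ball of $Y$, while true, is not needed. The paper's proof, by contrast, is more constructive: it exhibits $T_Y$ directly from $T_X$, which is useful if one wants to track how the tree changes under point deletion.
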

\begin{proof}
	Since every proper subset of $X$ can be obtained by consecutive deleting of points, it is sufficient to consider the case $Y=X\setminus \{x\}$, $x\in X$. Let $L_k$ be the set of leaves of $T_X$ at the level $k$ and let $n$ be a number of levels of $T_X$. If $x\in L_k$ and $|L_k|\geqslant 2$, then $T_Y$ can be obtained from $T_X$ by deleting the leaf $x$ from $T_X$. Clearly in this case $T_Y$ has exactly one internal node at each level expect the last level.
	
	Suppose $x \in L_k$ and $|L_k|=1$. Taking into consideration Proposition~\ref{lbpm}, the space $X$ can  be uniquely presented by a sequence of balls
	$$
	B_n\subset B_{n-1}\subset \cdots \subset B_2 \subset B_1
	$$
	where $B_n=L_n$ and $B_{i-1}=B_i\cup L_{i-1}$, $i=n,...,2$. Let us consider the space $(Y,d)$. It is clear that the following relations hold
	\begin{equation}\label{eq32}
		\overline{B}_n\subset \overline{B}_{n-1}\subset \cdots \subset \overline{B}_3 \subset \overline{B}_2
	\end{equation}
	where $$\overline{B}_n=B_n,...,\overline{B}_{k+1}=B_{k+1}, \quad \overline{B}_k=B_{k-1}\setminus\{x\},..., \overline{B}_2=B_1\setminus\{x\}.$$ Proposition~\ref{lbpm} and \eqref{eq32} imply that $T_Y$ fulfills the desired condition.
\end{proof}
%\newpage
\begin{theorem}\label{t3.10}
Let $(X,d)$ be a finite ultrametric space with $|X|\geqslant 2$. The following conditions are equivalent.
\begin{enumerate}
  \item[$(i)$] $T_X$ has exactly one internal node at each level except the last level.
  \item[$(ii)$] For every two distinct nonsingular balls $B_1$, $B_2 \in \mathbf{B}_X$ we have either $B_1 \subset B_2$ or $B_2 \subset B_1$.
  \item[$(iii)$] The equality
  \begin{equation}\label{t3.10e1}
  h(T_X)+|X|=|\mathbf{B}_X|
  \end{equation}
  holds.
  \item [$(iv)$] The equality
  $$
  h(T_Y)+|Y|=|\mathbf{B}_Y|
  $$
  holds for every nonempty $Y \subseteq X$.
\end{enumerate}
\end{theorem}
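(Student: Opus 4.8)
The plan is to read Theorem~\ref{t3.10} entirely through the ball--node dictionary of Proposition~\ref{lbpm}. Under the bijection $V(T_X)\cong\B_X$ the leaves of $T_X$ correspond to the $|X|$ singular balls and the internal nodes to the nonsingular balls, so that
$$
|\B_X| = |V(T_X)| = |X| + |I(T_X)|.
$$
I would then prove the four conditions equivalent through the chain of implications $(i)\Leftrightarrow(iii)$, $(i)\Leftrightarrow(ii)$, and $(iii)\Leftrightarrow(iv)$, using the displayed identity to pass between cardinalities of balleans and counts of internal nodes.

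For $(i)\Leftrightarrow(iii)$ I would first record two structural facts about levels. Any internal node at level $k$ has a child at level $k+1$, so every internal node sits at a level at most $h(T_X)-1$; and along a longest root-to-leaf path (of length $h(T_X)$) each node at levels $0,\ldots,h(T_X)-1$ is internal, since it has a successor on that path. Hence each of the levels $0,\ldots,h(T_X)-1$ carries at least one internal node, which gives
$$
|I(T_X)| \geq h(T_X),
$$
with equality exactly when every such level carries precisely one internal node, that is, exactly condition $(i)$. Substituting the equality $|I(T_X)|=h(T_X)$ into $|\B_X|=|X|+|I(T_X)|$ turns it into the identity $h(T_X)+|X|=|\B_X|$ of $(iii)$, so $(i)\Leftrightarrow(iii)$.

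For $(i)\Leftrightarrow(ii)$ I would use that in an ultrametric space two balls are either nested or disjoint, and that under the correspondence $B_1\subseteq B_2$ holds if and only if the node representing $B_1$ is a descendant of the node representing $B_2$ (incomparable nodes have disjoint leaf-sets). Thus $(ii)$ says precisely that the internal nodes are pairwise comparable in the ancestor order, i.e.\ they form a chain. A chain forces all internal nodes to lie at pairwise distinct levels, and combined with the lower bound ``at least one internal node at each level $0,\ldots,h(T_X)-1$'' from the previous step this yields exactly one internal node per such level, namely $(i)$. Conversely, if $(i)$ holds, then the unique internal node at level $k+1$ has its parent at level $k$, and that parent is internal, hence is the unique internal node at level $k$; the internal nodes therefore lie on a single descending chain, giving $(ii)$.

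Finally, $(iv)\Rightarrow(iii)$ is immediate on taking $Y=X$, and for $(iii)\Rightarrow(iv)$ I would route through $(i)$: by the equivalence $(iii)\Leftrightarrow(i)$ already established, $T_X$ has exactly one internal node at each level except the last, so Lemma~\ref{l3.9} transfers this property to $T_Y$ for every $Y\subseteq X$ with $|Y|\geqslant2$, and applying $(i)\Leftrightarrow(iii)$ to each such $Y$ yields the desired equality (the case $|Y|=1$ being trivial, since then $h(T_Y)=0$ and $|\B_Y|=|Y|=1$). The only genuinely delicate point is the equality analysis of $|I(T_X)|\geqslant h(T_X)$ together with the ``distinct levels'' consequence of the chain condition in $(ii)$; the rest is bookkeeping via Proposition~\ref{lbpm} and Lemma~\ref{l3.9}.
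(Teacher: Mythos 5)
Your proof is correct, and its overall skeleton (the same three pairwise equivalences, with $(iv)\Rightarrow(iii)$ trivial and $(iii)\Rightarrow(iv)$ routed through Lemma~\ref{l3.9}) matches the paper's. The one place where you genuinely diverge is $(i)\Leftrightarrow(iii)$: the paper obtains it from Lemma~\ref{l3.8}, i.e.\ from the chain $h(T_X)+1\leqslant|\Sp{X}|\leqslant|\mathbf{B}_X|-|X|+1$ built out of Lemma~\ref{p3.7} and Remark~\ref{r2.5}, so that equality in $h(T_X)\leqslant|\mathbf{B}_X|-|X|$ is analyzed through the spectrum and the injectivity of the internal labeling. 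You instead prove the inequality $|I(T_X)|\geqslant h(T_X)$ directly --- every internal node lies at a level in $\{0,\ldots,h(T_X)-1\}$ and each such level carries at least one internal node, witnessed by a longest root-to-leaf path --- and read off the equality case as condition $(i)$, then convert via $|\mathbf{B}_X|=|X|+|I(T_X)|$ from Proposition~\ref{lbpm}. Your version is more elementary and self-contained (it never mentions $\Sp{X}$), while the paper's reuses machinery it has already set up and thereby also records, implicitly, that $(i)$ forces both $h(T_X)=|\Sp{X}|-1$ and the injectivity of the internal labeling. Your treatment of $(i)\Leftrightarrow(ii)$ via the ancestor order (pairwise comparable internal nodes, hence distinct levels, hence one per level) is exactly the content the paper compresses into ``follows from the definition of the representing trees,'' and your handling of the singleton case $|Y|=1$ in $(iii)\Rightarrow(iv)$ is a detail the paper omits.
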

\begin{proof}
	The equivalence $\mathrm{(i)}\Leftrightarrow\mathrm{(ii)}$ follows from the definition of the representing trees. The equivalence $\mathrm{(i)}\Leftrightarrow\mathrm{(iii)}$ can be obtained from Lemma~\ref{l3.8}. The implication $\mathrm{(iv)}\Rightarrow \mathrm{(iii)}$ is trivial and $\mathrm{(iii)}\Rightarrow \mathrm{(iv)}$ follows from $\mathrm{(i)}\Leftrightarrow\mathrm{(ii)}$ and Lemma~\ref{l3.9}.
\end{proof}

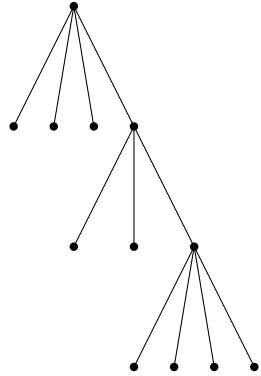
\begin{figure}
\begin{tikzpicture}[scale=0.8]
\draw (0,4) -- (1,6) -- (4,0);
\draw (2/3,4) -- (1,6);
\draw (4/3,4) -- (1,6);
\fill[black] (2/3,4) circle(2pt);
\fill[black] (4/3,4) circle(2pt);
\fill[black] (1,6) circle(2pt);
\fill[black] (0,4) circle(2pt);
\fill[black] (2,4) circle(2pt);
\fill[black] (3,2) circle(2pt);
\fill[black] (4,0) circle(2pt);
\draw (2,4) -- (1,2);
\draw (2,4) -- (2,2);
\draw (3,2) -- (2,0);
\draw (3,2) -- (8/3,0);
\draw (3,2) -- (10/3,0);
\fill[black] (2,2) circle(2pt);
\fill[black] (1,2) circle(2pt);
\fill[black] (2,0) circle(2pt);
\fill[black] (8/3,0) circle(2pt);
\fill[black] (10/3,0) circle(2pt);
\end{tikzpicture}
\caption{A tree which has exactly one internal node at each level except the last level.}
\label{fig6}
\end{figure}

Using condition~$\mathrm{(i)}$ from the last theorem it is easy to prove that every finite ultrametric space $(X, d)$ satisfying~\eqref{t3.10e1} belongs to $\mathcal{TSI}$ (see Figure~\ref{fig6}).

\textbf{Acknowledgements.} The research of the first author was supported by grant 0115U000136 of Ministry Education and Science of Ukraine.  The research of the second author was supported by National Academy of Sciences of Ukraine within scientific research works for young scientists,  Project 0117U006050, and by Project 0117U006353 from the Department of Targeted Training of Taras Shevchenko National University of Kyiv at the NAS of Ukraine. The first two authors were partially supported by the State Fund For Fundamental Research, Project F 71/20570.

\end{document}